\title{Janossy densities for Unitary ensembles at the spectral edge}
\date{\today} 
\author{Brian Rider \\
Department of Mathematics \\ University of Colorado at Boulder 
\\  Boulder, CO 80309
 \and Xin Zhou \\
Department of Mathematics \\  Duke University \\
 Durham, NC 27708
}
\newcounter{num}
    \newtheorem{theorem}{Theorem}[section]
    \newtheorem{lemma}[theorem]{Lemma}
     \newtheorem{proposition}[theorem]{Proposition}
         \newtheorem{cor}[theorem]{Corollary}
       \newtheorem{claim}[theorem]{Claim}
      \newtheorem{definition}[theorem]{Definition}
     \newtheorem{remark}[theorem]{Remark}
\newcommand{\beq}        {\begin{eqnarray}}
\newcommand{\eeq}        {\end{eqnarray}}
\newcommand{\be}         {\begin{equation}}
\newcommand{\ee}         {\end{equation}}
\newcommand{\beqn}       {\begin{eqnarray*}}
\newcommand{\eeqn}       {\end{eqnarray*}}
\newcommand{\ba}         {\begin{array}}
\newcommand{\ea}         {\end{array}}
\newcommand\ep{\varepsilon}
\newcommand\R{\mathbb R}
\newcommand\C{\mathbb C}
\newcommand\al{{\alpha}}
\newcommand\tht{{\theta}}
\newcommand\wt{\widetilde}
\newcommand\wh{\widehat}
\newcommand\tr{\rm{ tr }}
\newcommand\ra{{\rightarrow}}
\newcommand\la{{\leftarrow}}
\newcommand\lra{{\stackrel{\shortrightarrow}{ \shortleftarrow} }}
\newcommand\pr{{\prime}}
\newcommand\supp{\rm{{supp}}}
\newcommand\ID{{\mathbb I}}
\newcommand{\one}{{\mathbf 1}}
\newcommand\Ai{\rm{ Ai}}
\newcommand\om{\omega}
\newcommand\RE{{\rm Re} \,}
\newcommand\IM{{\rm Im} \,}
\newcommand\sra{{\shortrightarrow}}
\newcommand\sla{{\shortleftarrow}}
\newcommand\OO{{\mathcal O}}
\begin{document}
\maketitle

\begin{abstract}   For a broad class of unitary ensembles of random matrices we demonstrate
the universal nature of the Janossy densities of eigenvalues near the spectral edge,
providing  a different formulation of the probability distributions of the limiting
second, third, etc. largest eigenvalues
of the ensembles in question. The
approach is based on a representation of the Janossy densities in terms of a system of
orthogonal polynomials,  plus the steepest
descent method of Deift and Zhou for the asymptotic analysis of the associated Riemann-Hilbert
problem.
\end{abstract}







\section{Introduction}
\setcounter{equation}{0}
\label{sec:int}

Consider the probability measure ${ P}_n$ on the space of $n \times n$ Hermitian matrices $M$
defined by
\[
\label{equ1}
d { P}_n(M) = \frac{1}{Z_n} e^{- n \, \tr V(M)}  \,  d M,
\]
in which $\tr$ denotes the matrix trace,
$dM$ is the Lebesgue measure,
and the potential $V$ grows sufficiently fast at $\pm \infty$
so that the normalizer $Z_n < \infty$.   This prescription is an instance of the
{\em unitary ensembles} of Random Matrix Theory;  the invariance
$d {P}_n(U^*MU) = d {P}_n(M)$ for any $n \times n$ unitary
matrix $U$  explains the terminology.

Regarding their spectral properties 
these ensembles are integrable.  That is to say, the  joint
probability density of the eigenvalues $x_1,  x_2, \dots, x_n$ induced by $P_n$
may be computed:
\be
\label{e1}
 {\rho}_n(x_1,  \dots, x_n)  =  \frac{1}{\hat{Z}_n} \prod_{1 \le \ell  < k \le n}  | x_{\ell} - x_k |^2
                                   e^{- n \sum_{k=1}^n V (x_k) },
\ee
with a new normalizer ${\hat Z}_n$.  Even more, all finite dimensional correlation
functions of the eigenvalues,
\be
\label{e2}
{\rho}_{n}^{(k)} (x_1, \dots, x_k)   \equiv    \frac{n !}{(n-k)!} \,
                                                \int_{-\infty}^{\infty} \cdots \int_{-\infty}^{\infty}
                                             {\rho}_n(x_1, \dots,x_k, {\bar x}_{k+1}, \dots, {\bar x}_n)  \,
                                             d {\bar x}_{k+1}  \cdots d {\bar x}_n,
\ee
have explicit expressions.  Bring in the system of polynomials
\[
p_{k,n}(x)  = \gamma_{k,n} x^k + \dots,
\]
$k = 1, \dots, n$ with $\gamma_{k,n} > 0$,  orthonormal with respect to the weight $w_n(x) \equiv e^{-n V(x)}$
over $\R$.  That is,
$
     \int_{-\infty}^{\infty}  p_{\ell,n}(x) p_{k,n}(x) w_n(x) dx   =  \delta_{\ell k}$, and it holds
\be
\label{cor}
    {\rho}_n^{(k)}(x_1, \dots, x_k)   =  \det \Bigl[  K_n(x_{\ell}, x_m) \Bigr] _{1 \le \ell, m \le  k},
\ee
in which
\beq
\label{CD}
   K_n(x, y)  & =  &   \sqrt{w_n(x)} \sqrt{w_n(y)} \,  \sum_{k=0}^{n-1}  p_{k,n}(x) p_{k,n}(y)  \\
                     & = &  \sqrt{w_n(x)} \sqrt{w_n(y)} \,   \frac{\gamma_{n-1, n}}{\gamma_{n,n} } \
                          \frac{ p_{n,n}(x)  p_{n-1,n}(y)   -  p_{n-1,n}(x) p_{n,n}(y)}{x-y}, \nonumber
\eeq
by the formula of Christoffel-Darboux.  The form of (\ref{cor}) implies that the ensemble eigenvalues comprise
a {\em determinantal}  point process.

With the above normalization, $\rho_n^{(k)}(x_1, \dots, x_k)$ is
really a joint intensity of there being an eigenvalue, irrespective of order, at each of the points
$x_1$ through $x_k$.  Alternatively, fix a subset ${\Gamma}$ of ${\R}$ containing
$x_1, \dots, x_k$.  Then, the probability that there are exactly $k$ eigenvalues in ${\Gamma}$, one at each
of those same points, defines the $k$-th level Janossy density, denoted   by
${\cal J}_{n, {\Gamma}}^{(k)}(x_1, \dots, x_k)$.  For any determinantal point processes
the   Janossy densities are also determinantal  (\cite{DJ88} p. 140): in our case,
\be
\label{Janossy}
   {\cal J}_{n, {\Gamma}}^{(k)}(x_1, \dots, x_k) =
    D( {\Gamma} ) \times \det \Bigl[  L_{n, {\Gamma}} (x_{\ell}, x_m) \Bigr]_{1 \le \ell, m \le  k},
\ee
where
\be
\label{LK}
L_{n, {\Gamma}}  = K_{n, {\Gamma}} ( \ID - K_{n, {\Gamma}} )^{-1},
\ee
the kernel $K_{n, {\Gamma}}(x,y)$ equaling  ${\one}_{\Gamma}(x) K_n(x,y)  {\one}_{\Gamma}(y)$, 
and the prefactor  $D(\Gamma)$
is the Fredholm determinant
\be
   D(\Gamma) = \det ( \ID  - K_{n, {\Gamma}} ).
\ee
More important for what follows, it has recently been shown in
\cite{BS03} that  kernel of $L_{n, \Gamma}$ is also
Christoffel-Darboux  type. In particular,
\be
\label{Lker}
     L_{n, \Gamma}(x,y)  =  \sqrt{w_n(x)} \sqrt{w_n(y)} \,   \frac{{\tilde  \gamma_{n-1, n}}}{{\tilde \gamma_{n,n}} } \
                          \frac{ {\tilde p}_{n,n}(x)  {\tilde p}_{n-1,n}(y)   -  {\tilde p}_{n-1,n}(x)  {\tilde p}_{n,n}(y)}{x-y},
\ee
where $\{ {\tilde p}_{k,n} \}$ are the polynomials orthogonal to the weight $w_n(x)$, now restricted to the
complement of $\Gamma$:
\[
    \int_{{\R} \backslash \Gamma}  {\tilde p}_{\ell,n}(x)  {\tilde p}_{k,n}(x) w_n(x) dx = \delta_{\ell k}.
\]
For a large class of potentials  $V$, \cite{DKMVZ99b} employs the Riemann-Hilbert
Problem ($RHP$) characterization of the system $\{ p_{k,n} \}$ to obtain sharp $n \ra \infty$ asymptotics of
the kernel  $K_n$, and thus also the correlation functions $\rho_{n}^{(k)}$, in the bulk
of the spectrum.  Here we take up the analogous project for the Janossy densities at the spectral edge
by analyzing the $\{ {\tilde p}_{k,n} \}$ system.

 The requirements on the potential $V$ are  described in terms
 of the equilibrium measure
 $\mu_V$,  or  weak limit of the eigenvalue counting measure. 
 This may be characterized as 
the infimum of
 \be
\label{energy}
   I _V(\mu) = \int_{-\infty}^{\infty} \int_{-\infty}^{\infty}  \log \frac{1}{| x -  y| } d \mu(x) d \mu(y)
                     + \int_{-\infty}^{\infty} V(x) d \mu (x),
\ee
over  the space of probability measures on $\R$. Now, if
\be
\label{a1}
  V : \R  \, \ra \, \R  {\mbox{ is real analytic}}
\ee
and
\be
\label{a2}
    \lim_{|x| \ra \infty}  \frac{V(x)}{\log(x^2 + 1)}  = + \infty,
\ee
then \cite{DKM98} proves that this infimum is uniquely attained at  $\mu_V$.
Further, $\mu_V$ possesses a density
$\psi_V(x)$ with compact support comprised of a finite number of intervals.
Assumptions (\ref{a1}) and (\ref{a2}) are adopted here.  By a scaling
we may fix the rightmost edge of the support of $\psi_V(x)$ at $x= 1$, and we further
assume that
\be
\label{a3}
        \psi_{V}(x)  \mbox{ is }  { regular}.
\ee
By this we will mean the following.

\medskip

 (a) $\psi_V$ vanishes like a square-root at each endpoint of $\supp(\mu_V)$.

 (b) $\psi_V$ is strictly positive in the interior of  $\supp(\mu_V)$.
   
 (c)  Strict inequality holds in the characterizing Euler-Lagrange equations in the 

\hspace{.45cm} exterior of $\supp(\mu_V)$,  see (\ref{gp2}).
    
\medskip     
 
 \noindent
 One imagines
that square-root vanishing at  $x = 1$ would suffice; full regularity has been assumed for technical reasons.

For $V(x)$ satisfying (\ref{a1}), (\ref{a2}) and (\ref{a3}),  one may infer from the results in  \cite{DKMVZ99b} 
that the  kernel $K_N$  at the spectral edge has the universal limit,
\be
\label{Airy}
  \lim_{n \ra \infty}   \frac{1}{c_V n^{2/3}}   K_{ n}
        \Bigl(1 +  \frac{x}{c_V n^{2/3}}, 1 +   \frac{y}{c_V   n^{2/3} }    \Bigr)   =   \frac{ \Ai(x) \Ai^{\pr}(y)  -  \Ai^{\pr}(x) \Ai(y)}{x-y} ,
\ee
with constant $c_V > 0$ and $\Ai(\cdot)$ the Airy function.\footnote{Though understood to hold in greater generality, a
detailed proof of (\ref{Airy}) actually only appears in the literature for polynomial $V$ \cite{DeiftGioev}.}
     Based on this, it is expected that the
kernel $L_{n, \Gamma}$ for $\Gamma =  [ 1 + \alpha/ ( c_V n^{2/3}), \infty)$ with any real $\alpha$
will have a universal limit as $n \ra \infty$.   We introduce the shorthand,
\be
\label{Ldef}
    L_{n,\alpha}(x, y)  \equiv  L_{n, [1 + \alpha / ( c_V n^{2/3} ),  \, \infty) } (x,y),
\ee
and,
noting that the regime of interest is for $x, y \in \Gamma$,  prove the following.

\begin{theorem}
\label{maintheorem}
Assume that the potential $V(x)$  satisfies (\ref{a1}), (\ref{a2}) and (\ref{a3}).
Then, there are  pairs of functions $\{ {f}_{\alpha}^{\sra}(z), {g}_{\alpha}^{\sra}(z) \}$ and 
$ \{ {f}_{\alpha}^{\sla}(z), {g}_{\alpha}^{\sla}(z) \}   $
defined for $\alpha > 0$ and $\alpha \le 0$ respectively,
such that the following universal asymptotics hold.  For $\alpha > 0$,
\be
\label{AAlim}
 \frac{1}{c_V n^{2/3}} L_{ n, \alpha}
        \Bigl(1 +  \frac{x}{c_V n^{2/3} }, 1 +  \frac{y}{ c_V   n^{2/3} }    \Bigr)
     =  \frac{ {f}_{\alpha}^{\sra}(x)  {g}_{\alpha}^{\sra}(y)
    -  {g}_{\alpha}^{\sra}(x)  {f}_{\alpha}^{\sra}(y)} { x - y} + \OO ( n^{-2/3}  ),
\ee
while for $\alpha \le 0$,
\be
\label{BBlim}
 \frac{1}{c_V n^{2/3}} L_{ n, \alpha}
        \Bigl(1 +  \frac{x}{c_V n^{2/3}} , 1 +   \frac{y}{ c_V   n^{2/3} }    \Bigr)
     =  \frac{     {f}_{\alpha}^{\sla} (x-\alpha)   {g}_{\alpha}^{\sla} (y - \alpha)
    -     {g}_{\alpha}^{\sla}(x -\alpha)   {f}_{\alpha}^{\sla}(y - \alpha)} { x - y} + \OO ( n^{-2/3} ).
\ee
Both estimates are uniform for $x$ and $y$ restricted to compact sets of $(\alpha, \infty)$.
\end{theorem}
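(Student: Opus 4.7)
By the representation \eqref{Lker}, the kernel $L_{n,\alpha}$ is determined by the two leading polynomials $\tilde{p}_{n,n}$ and $\tilde{p}_{n-1,n}$ orthonormal with respect to $w_n(x)=e^{-nV(x)}$ on $(-\infty,a)$, with $a:=1+\alpha/(c_V n^{2/3})$, together with the ratio of their leading coefficients $\tilde\gamma_{n-1,n}/\tilde\gamma_{n,n}$. I encode these via the Fokas--Its--Kitaev $2\times 2$ matrix $Y(z)$, analytic on $\C \setminus (-\infty,a)$, with upper-triangular jump
\[
Y_+(x)=Y_-(x)\begin{pmatrix} 1 & w_n(x) \\ 0 & 1 \end{pmatrix}, \qquad x<a,
\]
and normalization $Y(z)\begin{pmatrix} z^{-n} & 0 \\ 0 & z^n\end{pmatrix}=I+O(z^{-1})$ as $z\to\infty$. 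Sharp $n^{-2/3}$-scale asymptotics of the entries of $Y$ near $x=1$ will feed directly through \eqref{Lker} into \eqref{AAlim}--\eqref{BBlim}.

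I then apply the Deift--Zhou steepest descent procedure of \cite{DKMVZ99b}, adapted to the truncated contour. The standard sequence $Y\to T\to S$ is carried out using the $g$-function attached to the relevant equilibrium measure: for $\alpha>0$ this is the original $\mu_V$, since $\supp(\mu_V)\subset(-\infty,a)$; for $\alpha\le 0$ the constraint brings the minimizer of $I_V$ over probability measures on $(-\infty,a]$ into play, and this differs from $\mu_V$ only by an $n^{-2/3}$-scale correction concentrated near $1$. Opening of the lens, the global parametrix $P^\infty$ built from the two-sheeted Riemann surface of the equilibrium measure, and the left-endpoint Airy parametrices all proceed as in \cite{DKMVZ99b} because these portions of the plane are untouched by the truncation. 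Regularity of $\psi_V$ is what makes those pieces of the analysis go through verbatim.

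The new ingredient is the local parametrix in a fixed neighborhood of $x=1$. Introducing the edge coordinate $z=1+\zeta/(c_V n^{2/3})$ freezes the jump contour of $S$ in the $\zeta$-plane, with a distinguished marked point at $\zeta=\alpha$: for $\alpha>0$ this is the hard terminus of the orthogonality interval, past which $S$ carries no jump at all, while for $\alpha\le 0$ it is the new effective soft edge of the restricted equilibrium measure. Solving these two model RHPs -- the chief technical obstacle -- produces the universal pairs $\{f_\alpha^{\sra},g_\alpha^{\sra}\}$ and $\{f_\alpha^{\sla},g_\alpha^{\sla}\}$ that play the role $\Ai(\zeta)$, $\Ai^\pr(\zeta)$ do in the untruncated theory. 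The shift by $-\alpha$ in \eqref{BBlim} reflects the fact that for $\alpha\le 0$ the natural local Airy-type coordinate is centered at $\zeta=\alpha$ rather than at $\zeta=0$.

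Finally, matching local and global parametrices on the boundary of the neighborhood of $x=1$ yields a small-norm RHP for $R$ with jump matrix $I+O(n^{-2/3})$, solved by the standard Deift--Zhou argument. Unwinding $R\to S\to T\to Y$ supplies the required asymptotics for $\tilde p_{n,n}$, $\tilde p_{n-1,n}$, and $\tilde\gamma_{n-1,n}/\tilde\gamma_{n,n}$, and substitution into \eqref{Lker} delivers \eqref{AAlim}--\eqref{BBlim} uniformly on compact subsets of $(\alpha,\infty)$.
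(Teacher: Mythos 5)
Your proposal follows the paper's own proof essentially step for step: the Fokas--Its--Kitaev RHP for the orthogonal polynomials on the truncated contour, the $g$-function split by the sign of $\alpha$ (unconstrained $\mu_V$ for $\alpha>0$, the constrained minimizer with an $n^{-2/3}$ edge correction for $\alpha\le0$), a new local parametrix at $z=1$ as the genuinely novel ingredient solving the two model problems $RHP^{\sra}$, $RHP^{\sla}$, and the small-norm $R$-problem followed by unwinding. One small imprecision: the $R$-problem jump is actually $I+\OO(n^{-1})$, just as in the Airy case; the $\OO(n^{-2/3})$ error in the theorem instead arises from the moving marked point $\alpha_n=\alpha+\OO(n^{-2/3})$ and the nonlinear change of variable $\zeta_n(z)$ in the local coordinate, but this does not affect the soundness of your outline.
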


The functions ${f}_{\alpha}^{\lra}(z)$
and ${g}_{\alpha}^{\lra}(z)$ are read off from the
solutions of a pair of $ 2 \times 2$  $RHP$s denoted by $RHP^{\sra}$  for $\alpha > 0$ and $RHP^{\sla}$  for $\alpha < 0$.
With the corresponding contours and their  orientations depicted in Figure \ref{firstcontours}, we have:

\vspace{.2cm}

\noindent{\boldmath{$ RHP^{\sra} \;  (\alpha > 0)$}: }  Seek a $ 2 \times 2 $ matrix valued function
$M^{\sra}(z)$, analytic in $\C \backslash \Sigma^{\sra}$ such that:
\be
\ba{ll}
     (M^{\sra})_{+}(z) =  (M^{\sra})_{-}(z) \left( \ba{cc}  1 & e^{ -  \frac{4}{3} z^{3/2} }    \\
                                                0 & 1    \ea \right),  &  z  \in (0, \alpha), \\
     (M^{\sra})_{+}(z) =  (M^{\sra})_{-}(z) \left( \ba{cc}  1 & 0 \\ e^{    \frac{4}{3} z^{3/2} }  &
                                                1    \ea \right),  &   \arg z  = \pm \frac{2}{3} \pi, \\
     (M^{\sra})_{+}(z) =  (M^{\sra})_{+}(z) \left( \ba{rr}  0 & 1 \\
                                                      -1 &   0    \ea
                              \right),
      &  z  \in (-\infty,0)
\ea
\ee
with
\be
\label{Masymp1}
  \hspace{.9cm}   M^{\sra}(z)  = z^{- \frac{1}{4} \sigma_3}  \,
  \frac{1}{\sqrt{2}}    \left( \ba{rr}  1 & 1 \\
  -1 &  1    \ea \right) e^{- \frac{\pi i}{4} \sigma_3}  ( I + \OO ( z^{-1} )  ),
  \  \  \    z \ra \infty.
\ee

\vspace{.2cm}

\noindent{\boldmath{$RHP^{\sla} \; (\alpha  < 0)$}:}  Now seek a $ 2 \times 2 $ matrix valued function
$M^{\sra}(z)$, analytic in $\C \backslash \Sigma^{\sla}$ such that:
\be
\ba{ll}
    (M^{\sla})_{+}(z) =  (M^{\sla})_{-}(z) \left( \ba{cc}  1 & 0 \\ e^{    \frac{4}{3} z^{3/2} + 2 \alpha z^{1/2} }  &
     1  \ea \right),  &   \arg z  = \pm \frac{2}{3} \pi, \\
    (M^{\sla})_{+}(z) =  (M^{\sla})_{+}(z) \left( \ba{rr}  0 & 1 \\
                                                        -1 &   0    \ea
                              \right),  &  z  \in
                              (-\infty,0),
\ea
\ee
with the same asymptotics as $z \ra \infty$.

Note that the problems coincide at $\alpha = 0$.  In either problem,
 $(M^{\lra})_{\pm}(z)$ indicate the limits of $ M^{\lra}(z)$ as $z$ approaches either $\Sigma^{\sra}$ or
$\Sigma^{\sla}$ from the positive or negative sides (the precise sense in which the limit holds is discussed later). Last, $\sigma_3$
denotes the third Pauli matrix,  $\left( \begin{smallmatrix} 1 & \phantom{-} 0 \\  0 & -1 \end{smallmatrix} \right)$.


\begin{figure}[t]
\centerline{   
             \scalebox{0.55}{
             \includegraphics{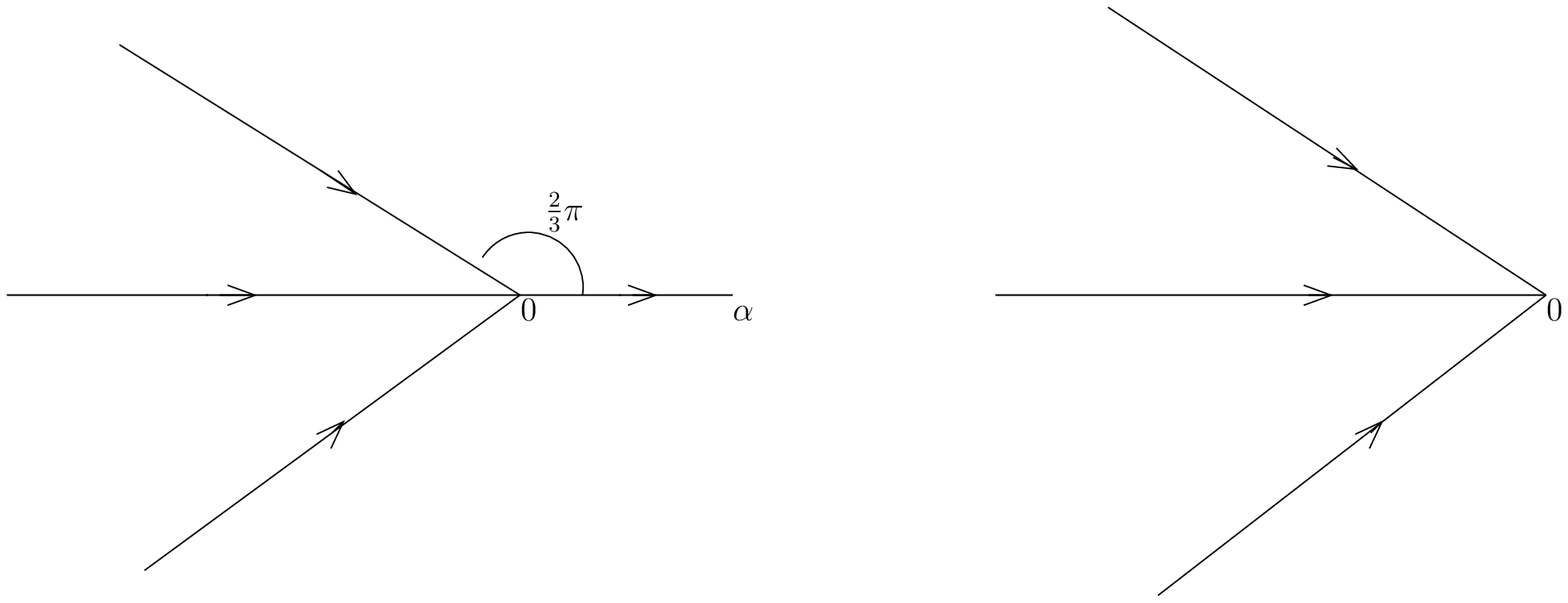}
                }
             }
\caption{The contours  $\Sigma^{\sra}$ and $ \Sigma^{\sla}$ for $RHP^{\sra}$ and $RHP^{\sla}$}
\label{firstcontours}
\end{figure}

A large part of this paper is dedicated to the proof that there exist
unique solutions to $RHP^{\sra}$ and $RHP^{\sla}$.  Granting that, we
may now define the functions comprising the limiting kernels (\ref{AAlim})
and (\ref{BBlim}), hereafter denoted   $\mathbb A_{\alpha}(x,y)$ and $\mathbb B_{\alpha}(x,y)$.

\begin{definition}
\label{maindef}
For $ -\frac{2}{3} \pi  < \arg z < \frac{2}{3} \pi$ and  $z \notin [0, \alpha]$,
\be
\label{AAdef}
     ( { f}_{\alpha}^{\sra} (z), {g}_{\alpha}^{\sra}(z) )  
         =  \frac{1}{\sqrt{2\pi}} e^{ \frac{\pi i}{4}}  e^{ - \frac{2}{3} z^{3/2}}
       \left(  ({M}^{\sra})_{11}(z),   ({M}^{\sra})_{21}(z)  \right).
\ee
Similarly,
\be
\label{BBdef}
      ( {f}_{\alpha}^{\sla}(z), {g}_{\alpha}^{\sla}(z) )  
   =   \frac{1}{\sqrt{2\pi}} e^{ \frac{\pi i}{4} } e^{  -(  \frac{2}{3} z^{3/2} +  \alpha  z^{1/2} ) }
             \left( ({M}^{\sla})_{11}(z) ,   ({M}^{\sla})_{21}(z) \right)
\ee
for all   $z$ with $ - \frac{2}{3} \pi  < \arg z < \frac{2}{3} \pi$.  
\end{definition}

One concludes that $  {f}_{\alpha}^{\lra}(x)$ and $  {g}_{\alpha}^{\lra}(x) $
are real analytic for $x > \alpha$ and $x > 0$; the diagonals
 $\mathbb A_{\alpha}(x,x)$ and $\mathbb B_{\alpha}(x,x)$ for $x > \alpha$
are therefore well defined.  As for their 
 behavior as functions of $\alpha$:

\begin{theorem}
\label{contthm}
Both $\mathbb A_{\alpha}(x,y)$ and $\mathbb B_{\alpha}(x,y)$ are continuous
functions of $\alpha$ for fixed $x$, $y$.
Continuity holds down (or up) to $\alpha = 0$ from either side.
\end{theorem}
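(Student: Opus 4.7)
The plan is to reduce Theorem \ref{contthm} to the continuity of the RHP solutions $M^{\sra}(z;\alpha)$ and $M^{\sla}(z;\alpha)$ as functions of $\alpha$, with appropriate uniformity in $z$ on compacta off the jump contour, together with a matching statement at $\alpha = 0$. Granting that $M^{\lra}(z;\alpha)$ is jointly continuous in $(z,\alpha)$ away from the contours, the formulas (\ref{AAdef}) and (\ref{BBdef}) show that $f^{\lra}_\alpha$ and $g^{\lra}_\alpha$ are continuous in $\alpha$ for fixed $z$; substituting into the definitions of $\mathbb A_\alpha(x,y)$ and $\mathbb B_\alpha(x,y)$, and absorbing the shifts $x\mapsto x-\alpha$, $y\mapsto y-\alpha$ in (\ref{BBlim}) via the joint continuity, yields the claim on each of the two half-lines.

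For variation of $\alpha$ within $\{\alpha>0\}$ or $\{\alpha<0\}$, I would fix $\alpha_0$ and form the ratio $R(z;\alpha) = M^{\lra}(z;\alpha)\bigl[M^{\lra}(z;\alpha_0)\bigr]^{-1}$. Then $R$ satisfies a new RHP with jumps supported only where the jumps of $M^{\lra}(\cdot;\alpha)$ and $M^{\lra}(\cdot;\alpha_0)$ differ, and normalized to $I$ at infinity. For $RHP^{\sra}$ the discrepancy sits on the symmetric difference of $(0,\alpha_0)$ and $(0,\alpha)$, with jump matrix $I+O(|\alpha-\alpha_0|)$ in supremum norm (the exponential $e^{-\frac{4}{3}z^{3/2}}$ is harmless on this bounded piece). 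For $RHP^{\sla}$ the jump on $\arg z = \pm\frac{2}{3}\pi$ acquires a multiplicative factor $e^{2(\alpha-\alpha_0)z^{1/2}}$; since $\RE z^{1/2} = \tfrac{1}{2}|z|^{1/2}>0$ on those rays, this factor tends to $1$ uniformly in $z$ as $\alpha\to\alpha_0$, again in an integrable fashion. Standard small-norm theory for the Beals-Coifman integral equation then yields $R=I+O(\alpha-\alpha_0)$ on compacta off the contour, which is continuity of $M^{\lra}(z;\alpha)$ in $\alpha$.

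The more delicate point is matching at $\alpha=0$. From the $\alpha<0$ side, the jumps in $RHP^{\sla}$ depend continuously on $\alpha$ through $\alpha=0$ (the factor $e^{2\alpha z^{1/2}}$ extends continuously), so the same ratio argument as above gives $M^{\sla}(z;\alpha)\to M^{\sla}(z;0)$. From the $\alpha>0$ side, set $R(z;\alpha) = M^{\sra}(z;\alpha)\bigl[M^{\sla}(z;0)\bigr]^{-1}$. By construction $R$ has no jump on $\arg z=\pm\frac{2}{3}\pi$ or on $(-\infty,0)$; its only jump sits on the vanishing interval $(0,\alpha)$, where it equals
\[
R_-(z)^{-1}R_+(z) = I + e^{-\frac{4}{3}z^{3/2}}\,\bigl[M^{\sla}(z;0)\bigr]^{-1}\!\left(\begin{smallmatrix}0&1\\0&0\end{smallmatrix}\right)M^{\sla}(z;0).
\]
Since $M^{\sla}(\cdot;0)$ stays bounded on a neighbourhood of the origin (its Airy-type singularity being mild enough), this jump is uniformly bounded while its support has length $\alpha$. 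The associated Cauchy operator on $L^2(0,\alpha)$ has norm $O(\alpha^{1/2})$ in the singular part and $O(\alpha)$ in the off-diagonal inhomogeneity, so the integral equation for $R$ is solvable for small $\alpha$ and $R=I+O(\alpha)$ on compacta bounded away from $[0,\alpha]$. This gives $M^{\sra}(z;\alpha)\to M^{\sla}(z;0)$ as $\alpha\to 0^+$, completing the matching.

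The main obstacle is precisely this last step: controlling the solution of $RHP^{\sra}$ as its short contour $(0,\alpha)$ collapses to the origin, where $z^{3/2}$ is singular and $M^{\sla}(z;0)$ itself is only H\"older continuous. Verifying that the Cauchy operator norm vanishes with $\alpha$, and hence that no residual contribution persists in the limit, is the delicate technical point; everything else is a parameter-dependent rerun of the small-norm estimates used to solve the two RHPs in the first place.
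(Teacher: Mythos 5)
Your strategy --- compare $M^{\lra}(\cdot;\alpha)$ with $M^{\lra}(\cdot;\alpha_0)$ by forming the ratio $R(z;\alpha)=M^{\lra}(z;\alpha)[M^{\lra}(z;\alpha_0)]^{-1}$ and running small-norm theory on the residual jump --- is a genuinely different route from the paper, which instead conjugates out an explicit Cauchy-transform parametrix $P_\beta$ to transfer the $\alpha$-dependence from the moving endpoint to a jump on a \emph{fixed} circle $\partial D_b$, and then applies the perturbation principle Proposition~\ref{contprop}. But your norm estimates do not hold as stated, and that is the gap. For $RHP^{\sra}$ with $\alpha,\alpha_0>0$, the jump of $R$ on the interval between $\alpha_0$ and $\alpha$ is $I+e^{-\frac{4}{3}z^{3/2}}\,M^{\sra}(z;\alpha_0)\left(\begin{smallmatrix}0&1\\0&0\end{smallmatrix}\right)[M^{\sra}(z;\alpha_0)]^{-1}$; its deviation from $I$ is $O(1)$ in supremum norm, not $O(|\alpha-\alpha_0|)$ --- it is only the \emph{support} that has measure $O(|\alpha-\alpha_0|)$. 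Likewise, for $\alpha\downarrow 0$ the claim $\|C_v\|_{L^2(0,\alpha)\to L^2(0,\alpha)}=O(\alpha^{1/2})$ is not justified: the only available bound is $\|C_v\|\le C\|v-I\|_{L^\infty(0,\alpha)}$, and $\|v-I\|_{L^\infty(0,\alpha)}$ remains bounded away from zero as $\alpha\downarrow 0$ (it converges to the nonzero matrix built from the first column of $M^{\sla}$ at the origin). So the Neumann series for $(\ID-C_v)^{-1}$ has no visible small parameter, and ``standard small-norm theory'' does not apply off the shelf. You also have an inversion slip in the jump formula: the correct jump is $M^{\sla}(z;0)\left(\begin{smallmatrix}0&1\\0&0\end{smallmatrix}\right)[M^{\sla}(z;0)]^{-1}$, not $[M^{\sla}]^{-1}\left(\begin{smallmatrix}0&1\\0&0\end{smallmatrix}\right)M^{\sla}$; the distinction is essential, since the latter involves the second column of $M^{\sla}(\cdot;0)$, which has a logarithmic singularity at the origin, while the former involves only the bounded first column.

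The ingredient that would rescue the $R$-approach, and that needs to be said explicitly, is the rank-one nilpotent structure of the residual jump. Writing $N=\left(\begin{smallmatrix}0&1\\0&0\end{smallmatrix}\right)$, one has $(v(s)-I)(v(t)-I)=e^{-\frac{4}{3}(s^{3/2}+t^{3/2})}\bigl(M(s)^{-1}M(t)\bigr)_{21}\,M(s)NM(t)^{-1}$, and the scalar factor $\bigl(M(s)^{-1}M(t)\bigr)_{21}=-M_{21}(s)M_{11}(t)+M_{11}(s)M_{21}(t)$ involves only the bounded, Lipschitz first column of $M=M^{\sla}(\cdot;0)$ and vanishes at $s=t$; hence it is $O(|s-t|)=O(\alpha)$ on $(0,\alpha)^2$, so each iterate of the Neumann series beyond the first picks up an extra factor of $\alpha$, which bounds $\mu$ and gives $R=I+O(\alpha)$. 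Without this observation, the argument as written stalls at the step you yourself flag as ``delicate.'' The paper avoids the whole issue: resolving the moving endpoint with the explicit parametrix $P_\beta$ reduces matters to a fixed contour $\partial D_b$ on which the jump really does converge in $L^\infty\cap L^2$ (see (\ref{contestimate})); the residual subtlety at $\alpha=0$ is the logarithmic singularity of $(C_\beta)_\pm$ at the origin, handled by the explicit cancellation in (\ref{21term}) against the vanishing of $e^{\mp\frac{4}{3}\zeta_\mp^{3/2}}-1$.
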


Finally, 
while we have not expressed the limit kernel
in terms of known special functions, we do have the following asymptotics.

\begin{theorem}
\label{asympcor}
Uniformly for $z$ in  compact sets of $(0, \infty)$, 
\be
\label{AAasymp}
  { f}_{\alpha}^{\sra}(z)  = {\Ai} (z)  \Bigl( 1 +   \OO(e^{- \alpha^{3/2}  } )   \Bigr),  
   \   \   \ {g}_{\alpha}^{\sra}(z)  = {\Ai} (z)  \Bigl( 1 +   \OO(e^{- \alpha^{3/2}  } )   \Bigr)
\ee
as $\alpha \ra + \infty$, while
\begin{eqnarray}
\label{BBasymp}
      {f}_{\alpha}^{\sla}(z) &  =  &  ( |\alpha| - \frac{2}{3} z )^{1/2}  I_0{( z^{1/2} ( |\alpha| - \frac{2}{3} z ) )}
     \Bigl( 1 +   \OO( |\alpha|^{-1} )   \Bigr), \\
      {g}_{\alpha}^{\sla}(z)   & = &  -2 \pi   I_0^{\pr}{( z^{1/2} ( |\alpha| - \frac{2}{3} z ) )} 
     \Bigl( 1 +   \OO( |\alpha|^{-1} )   \Bigr)  \nonumber
\end{eqnarray}
as $\alpha \ra - \infty$.  $I_0(\cdot)$ is  the  modified  Bessel function of the first kind.
\end{theorem}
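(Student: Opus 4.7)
The plan is to compare each of $M^{\sra}$ and $M^{\sla}$ against an explicit model problem whose solution is known in closed form, and to control the error by the standard small-norm Riemann--Hilbert argument applied to the quotient $M^{\lra}\cdot(\text{model})^{-1}$.

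For (\ref{AAasymp}), I would compare $RHP^{\sra}$ directly with the classical Airy RHP, which is identical except that the upper-triangular jump $\left(\ba{cc}1 & e^{-\frac{4}{3}z^{3/2}} \\ 0 & 1\ea\right)$ sits on all of $(0,\infty)$ rather than just $(0,\alpha)$.  Let $M^{\mathrm{Ai}}$ denote the solution of the Airy RHP; applying Definition \ref{maindef} to it recovers precisely $\Ai(z)$ in both slots.  Setting $E(z):=M^{\sra}(z)M^{\mathrm{Ai}}(z)^{-1}$, the jumps of the two matrices cancel everywhere except on $(\alpha,\infty)$, where
\[
  J_E(z) = M^{\mathrm{Ai}}_{-}(z)\left(\ba{cc}1 & -e^{-\frac{4}{3}z^{3/2}} \\ 0 & 1\ea\right)M^{\mathrm{Ai}}_{-}(z)^{-1}.
\]
Since $M^{\mathrm{Ai}}$ is uniformly bounded on $(\alpha,\infty)$ and $|e^{-\frac{4}{3}z^{3/2}}|\le e^{-\frac{4}{3}\alpha^{3/2}}$ on that ray, $\|J_E-I\|_{L^2\cap L^{\infty}}=\OO(e^{-\frac{4}{3}\alpha^{3/2}})$; standard small-norm theory then produces $E(z)=I+\OO(e^{-\frac{4}{3}\alpha^{3/2}})$ uniformly on compact subsets disjoint from $(\alpha,\infty)$, and unpacking via $M^{\sra}=E\,M^{\mathrm{Ai}}$ in Definition \ref{maindef} yields (\ref{AAasymp}).

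For (\ref{BBasymp}), I would first strip off the exponential gauge by setting $N(z):=M^{\sla}(z)\,e^{-(\frac{2}{3}z^{3/2}+\alpha z^{1/2})\sigma_3}$.  Using that $\frac{2}{3}z^{3/2}+\alpha z^{1/2}$ satisfies $(\cdot)_{+}+(\cdot)_{-}=0$ across $(-\infty,0)$, a direct computation shows that $N$ has $\alpha$-independent jumps: $\left(\ba{cc}1 & 0 \\ 1 & 1\ea\right)$ on $\arg z=\pm\frac{2}{3}\pi$ and $\left(\ba{cc}0 & 1 \\ -1 & 0\ea\right)$ on $(-\infty,0)$, with the $\alpha$-dependence pushed entirely into the asymptotic at $z=\infty$.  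The $(1,1)$-entry of $N$ is, by construction, exactly $\sqrt{2\pi}\,e^{-\pi i/4}f^{\sla}_{\alpha}(z)$, reducing (\ref{BBasymp}) to asymptotics of $N_{11}$.  With the Bessel coordinate $\zeta:=z^{1/2}(|\alpha|-\frac{2}{3}z)$ -- chosen so that $e^{-(\frac{2}{3}z^{3/2}+\alpha z^{1/2})}=e^{\zeta}$ matches the leading large-argument behavior of $I_0(\zeta)$ -- I would then construct a Bessel parametrix $N^{\mathrm{Bess}}$ whose $(1,1)$- and $(2,1)$-entries are built from $(|\alpha|-\frac{2}{3}z)^{1/2}I_0(\zeta)$ and $-2\pi I_0'(\zeta)$ respectively (with Jacobian prefactors from $d\zeta/dz$), verify that its jumps and modified asymptotic match those of $N$ up to $\OO(|\alpha|^{-1})$, and close the argument by a small-norm analysis of $R:=N\,(N^{\mathrm{Bess}})^{-1}$.

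The hard part is (\ref{BBasymp}).  The Airy comparison is essentially immediate once the truncation of the jump on $(0,\alpha)$ is recognized, and yields exponentially small error.  The Bessel side requires genuine invention of the parametrix $N^{\mathrm{Bess}}$ -- matching both the modified exponential asymptotic at $\infty$ and the regular behavior at the branch point $z=0$ -- and extracting the polynomial rate $\OO(|\alpha|^{-1})$ from the subleading terms of the asymptotic expansion of $I_0$, uniformly for $z$ in compact subsets of $(0,\infty)$.
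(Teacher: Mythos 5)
Your plan for (\ref{AAasymp}) is essentially identical to the paper's (Lemma \ref{airyasymp}): the paper defines $R=M^{\sra}(P_{\rm A})^{-1}$ and bounds the single remaining jump on $[\alpha,\infty)$. One small imprecision: $M^{\mathrm{Ai}}(z)$ is \emph{not} uniformly bounded on $(\alpha,\infty)$ -- its entries carry $z^{\pm 1/4}$ factors. The paper handles this by expanding the conjugated jump in Airy functions, where the polynomial growth of $(\Ai')^2$ is crushed by the $e^{-\frac{4}{3}z^{3/2}}$ decay; your conclusion survives but the stated reason does not.

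The gap is in (\ref{BBasymp}). Stripping the exponential gauge via $N=M^{\sla}e^{-(\frac{2}{3}z^{3/2}+\alpha z^{1/2})\sigma_3}$ does make the jumps $\alpha$-independent, but it transfers the $\alpha$-dependence into exponential growth at $z=\infty$; an RHP normalized to $e^{\pm\theta(z)}\cdot(\mathrm{bdd})$ at infinity is outside the scope of the small-norm machinery you invoke, and you would have to re-derive uniqueness and the contour-integral representation in this exotic class. More seriously, a \emph{global} Bessel parametrix built from the variable $\zeta(z)=z^{1/2}(|\alpha|-\tfrac{2}{3}z)$ cannot exist: $\zeta'(z)$ vanishes at $z=|\alpha|/2$ and $\zeta$ has a spurious zero at $z=\tfrac{3}{2}|\alpha|$, so the change of variables is only biholomorphic in a disk of radius $\sim\ep|\alpha|$ about the origin. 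The paper's Lemma \ref{besselasymp} deals with precisely this: after the rescaling $w=\zeta/|\alpha|$ it installs the Bessel model $E_{\alpha}P_{\rm B}$ only inside $|w|<\ep$, matches it to the outer parametrix $\mathfrak{m}(|\alpha|w)$ along the circle $\partial U_{\ep}$, and it is the mismatch on that circle -- coming from the subleading terms in the large-argument expansion of $H_0^{(1,2)}$ -- that produces the $\OO(|\alpha|^{-1})$ rate. Without the local/outer decomposition and the circular matching contour, there is no well-defined source for the $\OO(|\alpha|^{-1})$ error you want to extract, and the proposed parametrix would be singular where the change of variables degenerates.
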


After describing applications of Theorem \ref{maintheorem} to
the limiting distributions of the largest eigenvalues for Unitary
ensembles and some possible extensions, the analysis begins  in Section 2 
where the $RHP$ connected to the polynomials $\{ \tilde{ p}_{k,n} \}$ 
is introduced.
Section 3 subjects this  $RHP$ to a series of transformations, following
the Deift-Zhou method of steepest descent \cite{DZ93}.
A local analysis for the problem in the vicinity of $z =1$ in terms of
$RHP^{\sla}$ and $RHP^{\sra}$ is detailed in Section 4. With these
parametrices, Theorem \ref{maintheorem} is proved in Section 5.
Section 6 is devoted to the existence question for $RHP^{\sla}$ and
$RHP^{\sra}$; this is accomplished by a general vanishing lemma argument.
Section 7 establishes several properties of those solutions, 
including their continuity and asymptotics (Theorems
\ref{contthm} and \ref{asympcor}).

\medskip

\noindent
{\bf Remark} The results here should be compared with those in the recent
paper \cite{ClaeysArno} which considers the following set-up:
Take $V$   regular  with right-most edge of $\mu_V$
placed at the origin, and seek the asymptotics of the corresponding orthogonal polynomial kernel 
for the weight $e^{-nV(x)}$ restricted to $(-\infty, 0]$.   This is the same starting point as
our problem.  In \cite{ClaeysArno} though, a parameter $c = c(n) > 0$ is introduced in the weight 
as in
$cV$ which, when adjusted, can move the edge of the support of $\mu_V$ away from the 
origin or push it in, creating a ``hard-edge", or square-root singularity at the origin.  While different 
at all finite $n$, this device
is  qualitatively the same as our choice of  the sign of $\alpha$, and should lead to the same 
phase transition in the limit.  By a quadratic transformation, the authors of \cite{ClaeysArno} are able
to write the limiting  kernel for points $x$ and $y$ to the left of the origin in terms of  Painlev\'e II.
For the probabilistic motivations here, it is the kernel to the right of the critical point
which is important.   By analogy this should correspond to the kernel in \cite{ClaeysArno} along the
imaginary axis; the precise relationship remains to be worked out.

\subsection{Janossy densities and the distribution of the largest eigenvalues}

Denote the ordered eigenvalues of $M$ by $\lambda_1 > \lambda_2 > \cdots$.
The well known gap formula for determinantal ensembles with kernel $K_n$
states that: for any $B \subset \R$,
\be
\label{gap}
  P \Bigl(\mbox{there are exactly } m \mbox{ eigenvalues in } B  \Bigr)
   =    \frac{-1^m}{m!} \frac{d^m}{d \theta^m} \,
       \det  \Bigl( I - \theta    K_n   {\one}_B  \Bigr)  \Bigr|_{\theta = 1}.
\ee
With  $m = 0$, this formula together with the limiting result (\ref{Airy}) impies
\be
\label{largest}
  \lim_{n \ra \infty}  P \Bigl( \lambda_{1} \le 1 +  \frac{\alpha}{c_V n^{2/3}} \Bigr)
     = \det \Bigl(  \ID - K_{Airy}    {\one}_{[\alpha, \infty)} \Bigr),
\ee
with $K_{Airy}$ standing in for the $L^2$-operator with Airy kernel, see 
again \cite{DeiftGioev} for a full proof in the case of polynomial $V$.
The celebrated result of Tracy and Widom (\cite{TW94}, with extensions in
\cite{TW96}) provides a closed form for this Fredholm determinant,
to wit,
\be
\label{TW}
  \det \Bigl(  \ID - K_{Airy}    {\one}_{[\alpha, \infty)} \Bigr) =
    \exp  \Bigl( - \int_{\alpha}^{\infty} (s - \alpha) u^2(s) ds  \Bigr)  \equiv F_{TW}(\alpha),
\ee
in which $u(s)$ is the unique solution of  Painlev${\acute{\mbox{e}}}$ II
with $u(s) \sim \Ai(s) $ as $s \ra +\infty$.

Formulas for
the limiting distributions of the scaled $\lambda_2$, $\lambda_3$, {\it etc},
also exist.    This is also found in \cite{TW94}, though note there the asymptotics 
are only taken on for GUE. To explain, 
first replace the appearance of $u(s)$ in (\ref{TW})
with $u(s; \theta)$ determined by the same equation but with 
$u(s; \theta) \sim \sqrt{\theta} \Ai(s)$ at infinity, and denote the corresponding 
exponential function $F(\alpha; \theta)$.  Then,  following (\ref{gap}),
$ \frac{-1^m}{m!} \times  \partial_{\theta}^{(n)} F(\alpha; \theta) $
evaluated at $\theta = 1$
yields the (limiting) probability of there being exactly $m$ eigenvalues larger
than $\alpha$.   The corresponding distribution functions can  then be built in the
obvious manner.

The Janossy densities provide a different path to the law
of the scaled largest eigenvalues.
From the definition (\ref{Janossy}), we have that
\beqn
P \Bigl(\mbox{exactly } m \mbox{ eigenvalues in } B  \Bigr)
& = & P \Bigl( \mbox{no eigenvalues in  }  B \Bigr) \\
&   &  \times \frac{1}{m!} \int_{B} \cdots \int_{B}
          \det \Bigl[ L_{n, B} (x_{\ell}, x_k)\Bigr]_{1 \le \ell ,k \le m}
          d x_1 \cdots d x_{m},
\eeqn
and,  assuming (\ref{largest}),  one can obtain the following from
Theorem \ref{maintheorem}.

\begin{cor}
\label{undertheint}
With ${\mathbb M}_{\alpha}$ equal to  $ \mathbb A_{\alpha}$ for $\alpha> 0$ and
${\mathbb B}_{\alpha}$ for $\alpha < 0$,
\be
\label{janlim}
\lim_{n\ra \infty}  P \Bigl(  \lambda_m \le 1 +  \frac{\alpha }{ c_V n^{2/3}}   \Bigr)
= F_{TW}(\alpha) \times   \sum_{n= 0}^{m-1} \frac{1}{n!} \int_{\alpha}^{\infty} \cdots \int_{\alpha}^{\infty}
         \det \Bigl(  {\mathbb M}_{\alpha}(x_{\ell}, x_{k} ) \Bigr) dx_1 \cdots d x_{n}.
\ee
\end{cor}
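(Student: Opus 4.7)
Start from the exact decomposition
\[
P\!\Bigl(\lambda_m \le 1 + \tfrac{\alpha}{c_V n^{2/3}}\Bigr)
   = \sum_{k=0}^{m-1} P\bigl(\text{exactly } k \text{ eigenvalues in } B_n\bigr),
   \qquad B_n := \Bigl(1+\tfrac{\alpha}{c_V n^{2/3}},\infty\Bigr),
\]
and apply the Janossy density formula (\ref{Janossy}) to express each summand as $D(B_n)\cdot \tfrac{1}{k!}\int_{B_n^k}\det[L_{n,B_n}(x_\ell,x_j)]\,dx_1\cdots dx_k$. Rescaling via $x_i = 1 + \xi_i/(c_V n^{2/3})$, the $k$ factors of $c_V n^{2/3}$ produced by (\ref{AAlim})--(\ref{BBlim}) in each column of the kernel are absorbed exactly by the $k$-fold Jacobian, reducing the term to
\[
D(B_n)\cdot \frac{1}{k!}\int_{(\alpha,\infty)^k}\det\!\Bigl[\tfrac{1}{c_V n^{2/3}}L_{n,\alpha}\bigl(1+\tfrac{\xi_\ell}{c_V n^{2/3}},\,1+\tfrac{\xi_j}{c_V n^{2/3}}\bigr)\Bigr]\,d\xi_1\cdots d\xi_k.
\]

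The hypothesis (\ref{largest}) combined with (\ref{TW}) gives $D(B_n)\to F_{TW}(\alpha)$, while Theorem \ref{maintheorem} provides locally uniform convergence of the rescaled kernel $\tilde L_{n,\alpha}$ to $\mathbb M_\alpha$ on $(\alpha,\infty)^2$. If the $n\to\infty$ limit can be passed inside the integration, the right-hand side of (\ref{janlim}) appears term by term and summing over $0 \le k \le m-1$ closes the argument. The sole obstacle is therefore uniform-in-$n$ tail control on the unbounded integration domain $(\alpha,\infty)^k$.

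The plan for this step is to upgrade Theorem \ref{maintheorem} to trace-class convergence of the restricted kernels on $L^2((\alpha,\infty))$. The first ingredient is an exponential tail bound of the form
\[
\tfrac{1}{c_V n^{2/3}}\, K_n\!\Bigl(1+\tfrac{\xi}{c_V n^{2/3}},\,1+\tfrac{\xi}{c_V n^{2/3}}\Bigr) \le C\,e^{-c\,\xi^{3/2}}, \qquad \xi \ge R,
\]
uniformly in large $n$; this drops out of the same steepest-descent analysis of the orthogonal-polynomial RHP that underlies (\ref{Airy}), since to the right of the endpoint the $g$-function forces exponential decay of the rescaled kernel. Together with locally uniform convergence, this yields Hilbert--Schmidt, and hence trace-class, convergence $\tilde K_{n,\alpha}\to K_{\mathrm{Airy}}\,\one_{[\alpha,\infty)}$. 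The second ingredient is that (\ref{largest}) forces $\det(I-\tilde K_{n,\alpha})\to F_{TW}(\alpha)>0$, so $(I-\tilde K_{n,\alpha})^{-1}$ is uniformly bounded, whence $\tilde L_{n,\alpha}=\tilde K_{n,\alpha}(I-\tilde K_{n,\alpha})^{-1}$ also converges in trace class to the corresponding operator with kernel $\mathbb M_\alpha$. Trace-class convergence then propagates to every coefficient of the Fredholm expansion of $\det(I+\tilde L_{n,\alpha})$, giving $\int\det[\tilde L_{n,\alpha}]\,d\xi \to \int\det[\mathbb M_\alpha]\,d\xi$, which is precisely the interchange required.
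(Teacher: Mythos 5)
The operator-theoretic route you sketch is a genuinely different strategy from the paper's, which works directly with pointwise dominated-convergence estimates on the kernel. Your strategy could in principle be made to work, but as written there are two real gaps.

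First, the step "Hilbert--Schmidt, and hence trace-class, convergence" is a false implication: $S_1 \subsetneq S_2$, so convergence in Hilbert--Schmidt norm does \emph{not} give trace-norm convergence. To recover trace-class convergence of $\tilde K_{n,\alpha}$ you would need an additional argument, for example a factorization $\tilde K_{n,\alpha} = \Phi_n\Phi_n^*$ with $\Phi_n \to \Phi$ in HS (the Christoffel--Darboux kernel and the Airy kernel both admit such factorizations), or a Gr\"umm-type result exploiting the positivity of $\tilde K_{n,\alpha}$ together with convergence of the traces. Nothing of the sort appears in your argument, so the chain $\tilde K_n \to K_{\rm Airy}$ in trace class, and hence $\tilde L_n \to {\mathbb M}_\alpha$ in trace class, is not established.

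Second, you declare the "sole obstacle" to be tail control at $+\infty$, and rely on "locally uniform convergence" to handle the rest. But Theorem \ref{maintheorem} gives uniformity only on compact subsets of the \emph{open} interval $(\alpha,\infty)$, so the convergence is a priori uncontrolled in a shrinking neighborhood of the endpoint $\alpha$. The paper's proof explicitly singles this out as the nontrivial part: one must show $n^{-2/3}L_{n,\alpha}(1+n^{-2/3}x,1+n^{-2/3}x)$ is uniformly integrable over $x \in [\alpha,\alpha+\varepsilon]$, and does so by examining the local form of $M^{\lra}$ near the origin / near $\alpha$ (the explicit parametrices in (\ref{6.8}) and (\ref{Qdef1})--(\ref{Qdef3})) to conclude the first column of $M^{\lra}$ and its derivative are bounded up to the edge, which then transfers to the finite-$n$ kernel via (\ref{bMdef}), (\ref{Lker2}). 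Your proposal never addresses this endpoint behavior; the exponential bound $\tilde K_n \le Ce^{-c\xi^{3/2}}$ for $\xi \ge R$ says nothing about $\xi$ near $\alpha$, and your framework implicitly assumes the rescaled kernels stay uniformly bounded down to $\alpha$, which is exactly what requires proof. If you want to pursue the operator-theoretic route, the near-edge boundedness from the RHP parametrices still has to be supplied; it cannot be absorbed into "locally uniform convergence."
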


This describes the general limit distribution
as that of the largest eigenvalue 
modulated by a finite sum of (standard) determinants with a universal kernel.
Again, at this point the kernel is only defined in terms of a pair of $RHP$s, and so
the above form of limit law is far from optimal.

\begin{proof}[Proof of Corollary \ref{undertheint}]
We  provide just a sketch, using  estimates developed below.
 To produce the second factor in (\ref{janlim}),
one must pass the point-wise convergence of the kernel established 
Theorem \ref{maintheorem} under the
integral.
The fast decay  of the exponential weight will control the integral at infinity, while more information is
needed to deal with the integral near $\alpha$.  In particular, here one wants to show
that $n^{-2/3} L_{n,\alpha}(1+ n^{-2/3}x, 1+ n^{-2/3} x)$ is uniformly integrable over 
$x \in [\alpha, \alpha + \epsilon]$.  
 Local forms of the solution of $RHP^{\lra}$, provided 
in (\ref{6.8}) for $\alpha > 0$ and (\ref{Qdef1})-(\ref{Qdef3}) for $\alpha < 0$, show that the first columns of $M^{\lra}$
along with their derivatives are bounded down to $\alpha$ or the origin (from the right).   
By Definition \ref{maindef}
 we then see that, even on diagonal, the limit kernel (${\mathbb M}_{\alpha}(x,x)$
 $= {f}_{\alpha}'(x)g_{\alpha}(x)  - {f}_{\alpha}(x) {g}_{\alpha}'(x)$, 
 neglecting the ($\lra$)-superscripts)
 is integrable near $\alpha$. Next,
(\ref{bMdef}) and (\ref{Lker2}) 
 express the finite $n$ kernel 
 in terms of certain (well-behaved) auxiliary functions and the first column of $M^{\lra}(z)$.
A simple analysis of those auxiliary functions shows that $n^{-2/3} L_{n,\alpha}$ inherits the  integrability of  $M^{\lra}$ and
yields the result.
\end{proof}

Improved asymptotics for the Janossy kernel 
might also provide estimates on the speed of convergence
to the Tracy-Widom law (analogues of either the Berry-Esseen estimates or Edgeworth expansions
for the classical central limit theorem).  Results of this type are important in multivariate statistics,
and have already been established for GUE and the related LUE in \cite{Kar04} and \cite{Cp06}.
The case of  unitary ensembles with non-quadratic potentials has not been explored. 
Note however from the general formula (\ref{LK}) we have
\beq
\label{logdet}
  \frac{d}{d \alpha} \log P \Bigl( \lambda_1 \le 1 +  \frac{\alpha}{c_V n^{2/3}}  \Bigr)
   & = & \frac{d}{d \alpha} \log  \det \Bigl(  \ID - K_n \one_{ [1 + \alpha c_V^{-1} n^{-2/3}, \infty)} \Bigr) \\
   & = & \frac{1}{c_V n^{2/3} }  L_{n, \alpha} \Bigl( 1  +  \frac{\alpha}{c_V n^{2/3}}, 1 +  \frac{\alpha}{c_V n^{2/3}} \Bigr).
   \nonumber
\eeq
A similar expression at $n = \infty$ is a first step in the  derivation of (\ref{TW}), and  
the limiting kernels (\ref{AAlim})
and (\ref{BBlim}) are not surprisingly tied  to the resolvent kernel of the Airy operator,
see \cite{D99a} and \cite{TW98}.   More to the point,  a suitable expansion in $n$ in
(\ref{logdet}) would bound the convergence speed.
From Theorem \ref{maintheorem} one anticipates the rate is $n^{-2/3}$,  and that is 
just what is proven for
 GUE and LUE.
  Of course,  
carrying out the suggested program requires  sharp 
asymptotics of $L_{n, \alpha}(x,y)$  along the diagonal ($x = y = \alpha$).
An estimate of the form
$$
   \Bigl|  \frac{1}{c_V n^{2/3} }  L_{n, \alpha} \Bigl( 1  +  \frac{\alpha}{c_V n^{2/3}}, 1 +  \frac{\alpha}{c_V n^{2/3}} \Bigr)
             - {\mathbb M}_{\alpha}(\alpha, \alpha) \Bigr| \le n^{-2/3} \phi(\alpha)
$$
with  $\phi(\alpha)$ integrable at positive infinity would, for example, more than suffice.

\section{First RHP and introduction to the calculation}
\setcounter{num}{2}
\setcounter{equation}{0}
\label{sec:theprob}

The starting point is  the $RHP$ characterization of orthogonal polynomials
due to Fokas, Its and Kitaev \cite{FIK92}.
Fix a half-line  $\Gamma = (-\infty, c]$ and consider the polynomials
\be
 \{  {\tilde p}_{k,n}  =  \tilde \gamma_{k, n}  x^{k} + \cdots \}
 \mbox{ orthonormal with respect to }
  w_n(x) = e^{-n V(x)}  \mbox{ for }  x \in \Gamma.
\ee
Then, the $RHP$ reads as follows.

\vspace{.2cm}

\noindent{\boldmath{$RHP$}  \bf{for} \boldmath{$Y$}:}
Seek a $2 \times 2$ matrix valued function $Y(z) = Y_n(z)$ such that
\be
\label{YRHP}
\ba{ll}
    Y(z)  \mbox{ analytic in }  \C \backslash \Gamma, &  \\
     Y_{+}(z) = Y_{-}(z)  \left( \ba{cc}  1 & w_n(z) \\
                                  0 & 1  \ea \right),    & z \in \Gamma, \\
     Y(z) =  \Bigl( I + O \Bigl( \frac{1}{z} \Bigr) \Bigr) \left( \ba{cc}  z^{n} & 0 \\
                    0 & z^{-n}  \ea \right), &  z \ra \infty.
\ea
\ee

The second, or ``jump",  condition, is read as
\be
\label{boundaryvalues}
    Y_{\pm}(z) \equiv \lim_{z^{\pr} \ra z} Y(\zeta), \ \ \  z \in \Gamma, \ \  \    z^{\pr}  \in \C_{\pm},
{\mbox{ the upper or lower half-plane}}.
\ee
This can be understood in the sense of continuous boundary values for $z \in \Gamma$
away from the endpoint $z = c$  with the additional condition that
\[
   Y(z) = \left(   \ba{cc}  \OO(1)  & \OO( \log|z-c| ) \\  \OO(1) &  \OO(\log|z-c| )  \ea  \right),  \    \   \  z \mbox{ near } c.
\]
With that said,  the basic result  is that the unique solution
of this $RHP$ is given by
\beq
\label{Ysol}
  Y(z) & = & \left( \ba{cc}   \frac{1}{\tilde{\gamma}_{n,n} } {\tilde p}_{n,n}(z) &
    \frac{1}{\tilde{\gamma}_{n,n} } \,  C  \Bigl( {\tilde p}_{n,n} w_n \Bigr)  (z)   \\
                    - 2 \pi i \, {\tilde \gamma}_{n-1,n} \,  {\tilde p}_{n-1,n} (z) &
                     -  2 \pi i  \,{\tilde \gamma}_{n-1,n}   \, C  \Bigl(  {\tilde p}_{n-1,n} w_n \Bigr)  (z)
               \ea \right),
\eeq
where  $C$ denotes the Cauchy operator on $\Gamma$:
\[
         C f (z)  = C_{\Sigma} f (z) \equiv  \frac{1}{2\pi i}  \int_{\Sigma}  \frac{ f(s)}{s -z} \, {ds},  \   \   \ z \notin \Sigma,
\]
for any contour $\Sigma \subset \C$ and function $f(z) \in L^2(\Sigma, |dz|)$.

Note that (\ref{Ysol}) contains the $(w_n, \Gamma)$ orthogonal polynomials of degrees $n-1$ and $n$
in its first column.   It follows that the kernel of interest,  $L_{n, \alpha}$, may be expressed entirely
in terms $(Y_{11}(z), Y_{21}(z) )$ where  $\Gamma$ is now a function of both $n$ and $\alpha$:
we have in particular,
\be
\label{GammaDef}
    \Gamma = \Gamma_{n,\alpha} = \left( - \infty,  1 +  \alpha  c_V^{-1} n^{-2/3} \right],
\ee
and will use the additional shorthand $ c_{n,\alpha} \equiv  \alpha c_V^{-1} n^{-2/3} $ for the
(moving) endpoint.

The analysis of $Y$ for $n \ra \infty$ entails a series of transformations,
$ Y \, \mapsto \, T \, \mapsto \, S \, \mapsto \,  R$,
in order to obtain a $RHP$ for $R$ which is normalized at infinity ({\em i.e.},
$R(z) \ra I$ as $z \ra \infty$),  and has  jump matrices which are uniformly close
to the identity as $n \ra \infty$.  Afterwards, unfolding this series of transformations
will produce the asymptotics of $Y$.  We are primarily concerned with the behavior
of $Y(z)$ in the vicinity of $z=1$, for which we will build local parametrices.
The basic program is identical to that in the analysis of the $RHP$
connected to orthogonal polynomials over the full line in \cite{DKMVZ99b}
or \cite{DKMVZ99a}.  Novel here is that the problem will follow two
different paths, depending on the sign of  $\alpha$.

\subsection{Equilibrium measures and the  $g$-function}
\label{equimeasure}

The first transformation, $Y \mapsto T$, rests on properties of the
the density  which minimizes the analogue of $I_V$ in
(\ref{energy}).  We begin by recalling several properties of $\psi_V$, the {``unconstrained"}
 equilibrium density connected to the analysis of the full-line orthogonal polynomials.

 As indicated in the introduction, the support of
  $\psi_V$ is a union of ($N+1$) disjoint intervals  and we normalize the right endpoint
  to sit at $1$.  The  intervals of support are referred to as the  {\em bands};  the
  complementary $N$ intervals making up the {\em gaps}.  Following \cite{DKMVZ99b},
  the interior of the support is denoted by
  \[
       J =  \bigcup_{k=1}^{N+1} (b_{k-1},  a_k),
  \]
 and the density $\psi_V$ can be written,
 \be
 \label{equdens}
    \psi_V(z) = \frac{1}{2 \pi i}  R_+^{1/2}(z)  h_V(z),  \  \   \  \mbox{ for }  z \in J,
 \ee
 in which
 \be
 \label{Rfactdef}
     R(z) =  \prod_{k=1}^{N+1} (z - b_{k-1}) ( z - a_{k} ),
 \ee
and $h_V$ is real analytic on ${\R}$.   Here, the branch of $R(z)$ is chosen so that
$R(z)$ behaves like $z^{N+1}$ as $z \ra  \infty$.

With this, the first transformation of the $RHP$ for the unconstrained polynomials
is based on the introduction of the {\em g-function},
\be
\label{oldg}
    g(z) = \int_{\R} \log(z-x) \psi_V(x) \, dx.
\ee
The function $g(z)$ is analytic on $\C \backslash (-\infty, 1]$ and has the following properties.
First, there a constant $\ell$ so that
\beq
\label{gp1}
             g_+(z)  + g_{-} (z)  -   V(z)   - \ell & = & 0,  \  \  \   \mbox{ for }  z \in {\bar J},  \\
\label{gp2}
             g_ {+}(z) + g_{-}(z) -  V(z) - \ell   & <   &   0, \  \  \  \mbox{ for } z \in {\R} \backslash {\bar J}.             
\eeq
(The strict inequality in (\ref{gp2}) is our final regularity condition.)
Second, it holds that
\be
\label{gp3}
    g_+(z) - g_{-}(z) = 2 \pi i \int_{z}^1 \psi_V(x) \,dx, \ \ \ \mbox{ for } z \in (-\infty, 1).
\ee
That is, $g_+(z) - g_{-}(z)$ is purely imaginary on $\R$ and constant in each of the gaps,
with the more detailed picture being:
\be
\label{gp4}
   g_{+}(z)  - g_{-}(z) =  \left\{     \ba{ll} 2 \pi i, & z  \in (-\infty, b_0),  \\
                                                                  2 \pi i \int_{b_k}^1 \psi_V(x) dx \equiv 2 \pi i \Omega_k,
                                                                            &  z \in (a_k, b_k), \\
                                                                       0,    &  z \in (1, +\infty).  \ea \right.
\ee
Appraisals
(\ref{gp1}) through (\ref{gp4}) are basic consequences of the Euler-Lagrange
equations for (\ref{energy}), as is explained in Section $3.2$ of \cite{DKMVZ99b}.

With  $\{ {\tilde p}_{k,n} \}$ we are working on the $n$-dependent interval
$\Gamma_{n,\alpha}$,   but  may  proceed in a like manner.  For each integer $n$ and real $\alpha$, the
old reasoning will show that infimum of
\be
\label{energycut}
   I_{n,\alpha}(\mu) =  \int_{\Gamma_{n, \alpha}}  \int_{\Gamma_{n,\alpha}} \log \frac{1}{|x - y|}  d \mu (x) d \mu (y)
             +  \int_{\Gamma_{n, \alpha}  } V(x) d \mu  (x)
\ee
is uniquely achieved.   The minimizing density is however qualitatively different
$\alpha > 0$ or $\alpha < 0$, and is denoted by $\psi_V^{\sra}$ or $\psi_V^{\sla}$
respectively.

\subsubsection{The case $\alpha > 0$}

When $\alpha > 0$, $\Gamma_{n,\alpha}$ contains the support of the full-line minimizer $\psi_V$,
and we state without proof the following.

\begin{lemma}
\label{firstdensitylemma}
It holds that $\psi_V^{\sra}(z) = \psi_V(z) $. Thus,  by assumption (\ref{a3}),
\be
\label{densatend1}
   \psi_V^{\sra}(x) =  ( 1 - x)^{1/2}  \beta_V(x) ,   \   \    x  \in  (1-\delta, 1),
\ee
for all small $\delta > 0$.  Here, $\beta_V(z) $ is analytic in a neighborhood of $z =1$,
$\beta_V(1)  > 0$, and for later we remark $ c_V \equiv (  \beta_V(1) /2)^{2/3}$.
\end{lemma}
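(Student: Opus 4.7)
The plan is to reduce the constrained variational problem on $\Gamma_{n,\alpha}$ to the unconstrained one on $\R$, and then read off the edge behavior from the representation (\ref{equdens}) already at hand. The starting observation is that for any $\alpha > 0$ one has $\supp(\psi_V) \subset (-\infty,1] \subset \Gamma_{n,\alpha}$, and for every probability measure $\mu$ with $\supp \mu \subset \Gamma_{n,\alpha}$ the two functionals coincide: $I_{n,\alpha}(\mu) = I_V(\mu)$. In particular $\psi_V$ itself is admissible in the constrained problem.

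First I would compare infima. Restricting the class of admissible measures can only raise the infimum, so $\inf I_{n,\alpha} \ge \inf I_V$; conversely $\psi_V$ realizes $\inf I_V$ while lying in the admissible class for $I_{n,\alpha}$, giving the reverse inequality. Hence $\inf I_{n,\alpha} = I_V(\psi_V)$, attained at $\psi_V$. Strict convexity of the logarithmic energy on probability measures (the same uniqueness principle invoked in \cite{DKM98}) forces uniqueness of the constrained minimizer, and therefore $\psi_V^{\sra} = \psi_V$.

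For the behavior at $x=1$ I would apply (\ref{equdens}) together with the factorization (\ref{Rfactdef}). Writing the rightmost endpoint as $a_{N+1} = 1$, split $R(z) = (z-1)\tilde R(z)$ where
\[
  \tilde R(z) = \prod_{k=1}^{N+1}(z - b_{k-1}) \, \prod_{k=1}^{N}(z - a_k).
\]
Since the bands are disjoint, $\tilde R$ is analytic in a neighborhood of $z=1$ with $\tilde R(1) > 0$. Fixing the branch of $R^{1/2}$ so that $R^{1/2}(z) \sim z^{N+1}$ at infinity, a routine local computation yields $R_+^{1/2}(x) = i(1-x)^{1/2}\sqrt{\tilde R(x)}$ for $x$ just to the left of $1$, with $\sqrt{\tilde R(\cdot)}$ real analytic. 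Substituting back into (\ref{equdens}) gives (\ref{densatend1}) with $\beta_V(z) = (2\pi)^{-1} \sqrt{\tilde R(z)}\,h_V(z)$ analytic near $z=1$. Positivity $\beta_V(1) > 0$ is exactly the genuine square-root vanishing demanded by the regularity clause (a3)(a), which precisely rules out higher-order cancellation between $\sqrt{\tilde R}$ and $h_V$ at the endpoint. The identity $c_V \equiv (\beta_V(1)/2)^{2/3}$ is simply a name for the natural scaling constant that will surface under the rescaling $z \mapsto 1 + x/(c_V n^{2/3})$ in the local analysis of Section 4.

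There is really no substantive obstacle to flag for this lemma: for $\alpha > 0$ the endpoint constraint is inactive and the claim is essentially tautological, with the edge asymptotics an immediate unpacking of the standard formula for $\psi_V$. This is in sharp contrast to the companion $\alpha \le 0$ case, where $\Gamma_{n,\alpha}$ truncates $\supp(\psi_V)$ and a new effective equilibrium measure, with qualitatively different edge behavior, has to be constructed essentially from scratch.
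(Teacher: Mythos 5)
Your proof is correct, and the approach is exactly the one the paper intends: the authors explicitly state this lemma \emph{without proof}, offering only the preceding observation that $\Gamma_{n,\alpha}$ contains $\supp(\psi_V)$ when $\alpha>0$. Your two steps — (i) comparing infima via inclusion of supports and invoking uniqueness (strict convexity of the energy, as in \cite{DKM98}) to conclude $\psi_V^{\sra}=\psi_V$, and (ii) factoring $R(z)=(z-1)\tilde R(z)$ in the representation (\ref{equdens}) to extract the square-root vanishing, with $\beta_V(z)=(2\pi)^{-1}\sqrt{\tilde R(z)}\,h_V(z)$ and positivity of $\beta_V(1)$ forced by regularity clause (a) — is the natural unpacking that the authors left to the reader. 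The only place you gesture rather than compute is the precise sign in $R_+^{1/2}(x)=i(1-x)^{1/2}\sqrt{\tilde R(x)}$, but since $\psi_V>0$ on the interior of the last band by regularity clause (b), the sign is forced a posteriori, so no harm is done.
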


Note that a definition of $\beta_V(z)$ is implicit in (\ref{equdens}).  Also, (\ref{densatend1})
has been set apart as this regime is of central importance in what follows.

\subsubsection{The case $\alpha < 0$}

For $\alpha< 0$, one attains different minima in (\ref{energycut}) and (\ref{energy}).
Still, using the assumed regularity we will show that,
for all $n$ large enough,  
the support of $\psi_V^{\sla}$ consists of
$N+1$   intervals,
\[
    J_{n, \alpha} =     \bigcup_{k = 1}^{N+1} \Bigl( b_{k-1}(n,\alpha), a_{k}(n, \alpha) \Bigr),
\]
with $a_{N+1}$ fixed at $c_{n,\alpha} = 1 + \alpha/(c_V n^{2/3})$.  Now setting,
\be
\label{Rndef}
   \wt R_{n, \alpha}(z)  = \frac{(z- b_N)}{ (z - a_{N+1})} \,  \prod_{k=1}^{N} (z - b_{k-1}) ( z - a_k),
\ee
(from here on we suppress the $(n, \alpha)$-dependence of the endpoints)
we have:

\begin{lemma}
\label{densitylemma2}
For $\alpha < 0$ and $V$ satisfying (\ref{a3}) it holds,
\be
\label{density2}
  \psi_{V}^{\sla} (z)  = \frac{1}{2 \pi i }    (\wt R_{n, \alpha} (z))_+^{1/2}  \,
              \Bigl[  (c_{n,\alpha}   -  z )  h_{V, n, \alpha}(z)  + C_{n,\alpha} \Bigr] ,
\ee
on its support,  with
\[
    C_{n, \alpha} =   \frac{1}{2}  ( \frac{ \alpha }{ c_V n^{2/3}} )    \, h_V(1)  + \OO(n^{-4/3}),
\]
where $h_V$ is as in (\ref{equdens})
and $h_{V, n, \alpha}(z)$ is real analytic and tends to $h_V$ as $n\ra \infty$.  The analogue of (\ref{densatend1}) reads
\be
\label{densatend2}
\psi_V^{\sla}(x) =    (c_{n, \alpha} - x)^{1/2}   \, \beta_{V, n, \alpha}^{(a)}(x)  +   \frac{1}{2} ( \frac{\alpha}{c_V n^{2/3}}) 
 ( c_{n, \alpha} - x)^{-1/2}  \, \beta_{V, n, \alpha}^{(b)}(x), \  \ \
  x \in [1 - \delta, c_{n,\alpha}],
\ee
where $ \beta_{V, n, \alpha}^{(a,b)}(z)$ are analytic in a neighborhood of
$z=1$ and $\lim_{n \ra \infty} \beta_{V, n, \alpha}^{(a,b)} (c_{n,\alpha}) = \beta_V(1).$
\end{lemma}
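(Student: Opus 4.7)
My approach mirrors Section~3.2 of \cite{DKMVZ99b} but accommodates the hard edge forced by the cut at $c_{n,\alpha}=1+\alpha/(c_Vn^{2/3})$ when $\alpha<0$. I would begin with the Euler-Lagrange variational conditions for the constrained minimizer $\psi_V^{\sla}$:
\[
2\int\log|x-y|^{-1}\,d\psi_V^{\sla}(y)+V(x)=\ell_{n,\alpha}
\]
on the interior of $\mathrm{supp}(\psi_V^{\sla})$, with the opposite inequality on $\Gamma_{n,\alpha}$ off the support. Differentiating along each band and introducing the resolvent $F(z)=\int(z-x)^{-1}\,d\psi_V^{\sla}(x)$ gives the boundary-value relation $F_{+}(x)+F_{-}(x)=V'(x)$ on the support interior. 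Setting $Q(z)=V'(z)-2F(z)$ one has $Q_+=-Q_-$ there; hence $Q^2$ extends to a function analytic on $\C$ save for at worst a simple pole at $z=c_{n,\alpha}$, which is the only non-soft point in the boundary of the support.

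The structural claim (\ref{density2}) then follows from a factorization. Granting (see below) that $\mathrm{supp}(\psi_V^{\sla})$ retains the $N+1$-interval topology of $\mu_V$ for all large $n$, with rightmost endpoint pinned at $c_{n,\alpha}$ and the remaining soft endpoints close to those of $\mu_V$, one writes
\[
Q(z)^2=\wt R_{n,\alpha}(z)\,H(z)^2,
\]
where the branch cuts of $\wt R_{n,\alpha}^{1/2}$ match the support while the ratio $(z-b_N)/(z-c_{n,\alpha})$ in (\ref{Rndef}) enforces the hard-edge $(z-c_{n,\alpha})^{-1/2}$ singularity. The function $H$ is then entire (polynomial when $V$ is polynomial), pinned down by the growth condition $F(z)=1/z+\OO(z^{-2})$ at infinity together with the mass normalizations $\int_{b_{k-1}}^{a_{k}}\psi_V^{\sla}$ fixing the endpoints. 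Recovering $\psi_V^{\sla}=(Q_+-Q_-)/(4\pi i)=(2\pi i)^{-1}(\wt R_{n,\alpha})_+^{1/2}\,H$ and rewriting the entire function $H(z)$ tautologically as $H(c_{n,\alpha})+(z-c_{n,\alpha})\cdot(\text{entire})$, i.e.\ as $(c_{n,\alpha}-z)h_{V,n,\alpha}(z)+C_{n,\alpha}$ with $C_{n,\alpha}:=H(c_{n,\alpha})$, gives (\ref{density2}). Factoring the resulting local expansion of $\wt R_{n,\alpha}^{1/2}$ at $z=c_{n,\alpha}$, namely $\wt R_{n,\alpha}^{1/2}(z)\sim[(c_{n,\alpha}-b_N)/(c_{n,\alpha}-z)]^{1/2}$, yields (\ref{densatend2}) with the $\beta_{V,n,\alpha}^{(a,b)}$.

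To extract the expansion $C_{n,\alpha}=\tfrac{1}{2}(\alpha/(c_Vn^{2/3}))\,h_V(1)+\OO(n^{-4/3})$ and the convergences $h_{V,n,\alpha}\to h_V$ and $\beta_{V,n,\alpha}^{(a,b)}(c_{n,\alpha})\to\beta_V(1)$, I would linearize the algebraic system for the endpoints and coefficients of $H$ in the small parameter $\delta:=c_{n,\alpha}-1=\alpha/(c_Vn^{2/3})$. At $\delta=0$ the system reduces to the unconstrained equilibrium problem, with $C_{n,0}=0$, $h_{V,n,0}=h_V$, $b_k(n,0)=b_k$, $a_k(n,0)=a_k$; real analyticity of the system in $\delta$ (by the implicit function theorem, once the topology is fixed) then bounds the remainder by $\OO(\delta^2)=\OO(n^{-4/3})$. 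A direct computation of $\partial_\delta C|_{\delta=0}$ using (\ref{equdens}) and the square-root behavior $\psi_V(x)\sim\beta_V(1)(1-x)^{1/2}$ near $x=1$ produces the leading coefficient $\tfrac{1}{2}h_V(1)$.

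The main obstacle is the structural input to the second paragraph: proving that $\mathrm{supp}(\psi_V^{\sla})$ retains exactly the $N+1$-interval topology of $\mu_V$ for all large $n$. A priori the binding constraint could spawn an extra band, merge neighboring intervals, or cause a gap to collapse. Ruling these out uses both the strict positivity of $\psi_V$ in the interior of its support (condition (b) of (\ref{a3})) and the strict inequality in the Euler-Lagrange relations off the support (condition (c)), since these open conditions supply the perturbative stability needed to conclude that the small perturbation by the constraint yields only a small perturbation of the answer. Once the topology is secured, the remaining endpoint-and-coefficient conditions form a smooth system in $\delta$ and the rest is implicit function theorem bookkeeping.
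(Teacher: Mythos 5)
Your overall strategy coincides with the paper's: derive the constrained equilibrium density via the Euler--Lagrange conditions, factor out the $(z-c_{n,\alpha})^{-1/2}$ singularity through $\wt R_{n,\alpha}$, and then run a perturbation argument in $\delta = c_{n,\alpha}-1$ around the free problem, using the implicit function theorem to control the endpoints. Your $Q(z)=V'(z)-2F(z)$ formulation is a standard equivalent of the paper's explicit Cauchy-type representation $G_\ep(z)$; nothing lost there. Writing $H(z) = H(c_{n,\alpha}) + (z-c_{n,\alpha})\cdot(\text{analytic})$ to define $C_{n,\alpha}$ and $h_{V,n,\alpha}$ is also fine, and your identification that $\delta=0$ must reduce to the free problem (so $C_{n,0}=0$, $h_{V,n,0}=h_V$, etc.) is exactly right.

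The genuine gap is in the step you flag yourself as ``the main obstacle'': you assert that conditions (b) and (c) of the regularity hypothesis ``supply the perturbative stability'' to fix the $N{+}1$-interval topology and give real analyticity of the endpoints in $\delta$, but you give no mechanism, and the implicit function theorem cannot be invoked without one. The paper's proof here has specific content that your proposal is missing. The free problem's endpoints are determined by a $(2N{+}2)\times(2N{+}2)$ system (moments $T_{-1},T_0,\dots,T_N$ plus integral conditions $N_1,\dots,N_N$), whose Jacobian is known to be nondegenerate at the free solution $P$ (this is cited to Section~12 of \cite{KM00} and is the actual use of regularity, not just conditions (b) and (c) abstractly). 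For the constrained problem one drops $T_{-1}$ and fixes $a_{N+1}$, leaving a $(2N{+}1)\times(2N{+}1)$ submatrix. The crucial computation is to show $\partial T_j/\partial a_{N+1}=0$ for $0\le j\le N$ and $\partial N_k/\partial a_{N+1}=0$ for $1\le k\le N$ at $P$: then the $(2N{+}2)$-determinant factors as $(\partial T_{-1}/\partial a_{N+1})$ times the $(2N{+}1)$-minor, so nondegeneracy of the full Jacobian forces nondegeneracy of the minor, and (by the chain rule) also gives $a_k'(0)=b_k'(0)=0$ for all the other endpoints. That last vanishing is then used a second time: the expansion of the contour integral for $C_\ep$ picks up contributions from every moving endpoint, and it is precisely because all derivatives vanish at $\delta=0$ except $a_{N+1}'(0)=-1$ that one cleanly isolates the coefficient $\frac{1}{2}h_V(1)$. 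Your ``direct computation of $\partial_\delta C|_{\delta=0}$ using the square-root behavior of $\psi_V$'' does not reach this; you need the explicit contour-integral representation of $C_\ep$ together with the endpoint-derivative facts. So the proposal is right in outline but skips the two pivotal calculations (Jacobian reduction and the $\partial/\partial a_{N+1}$ vanishing identities) that make the perturbation argument actually close.
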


With $\psi_V^{\sla}$ in hand, we define a $g$-function exactly as in (\ref{oldg}).  The basic relations
(\ref{gp1}) through (\ref{gp2}) remain valid, though with adjusted values for $\ell$ and the $\Omega$'s.

\begin{proof}[{Example  (GUE)}]
It is instructive to first spell out the computation for $GUE$, or when $V(z) = 2 z^2$.
The unrestricted minimizer has one band of support, $[-1,1]$, and is given
by the semi-circle law, $\psi_{GUE}(x) = \frac{2}{\pi} \sqrt{ 1 - x^2}$.
Now fixing the right edge at $a = 1- \ep$ for small $\ep > 0$,
there is still one band $[b, a]$ with $b$ and the adjusted density $\psi_{GUE}^{\ep}$ to be
identified.  If we put
\[
   G(z)  = \frac{1}{\pi i } \int_{\R}  \frac{\psi_{GUE}^{\ep}(s)}{s - z} \,  ds ,
\]
differentiating
the relation (\ref{gp1})  produces the scalar $RHP$:
\be
\label{babyrhp}
   G_{+}(z) + G_{-}(z) =   \frac{4 i}{\pi}  z  , \  \   z \in [b,a],
\ \mbox{ and } \  G_{+}(z) - G_{-}(z) = 0, \  \  z \in \R \backslash   [b,a].
\ee
Introduce
$\wt R(z) = \wt R_{\ep}(z)  \equiv \frac{z-b}{z-a}$ and  multiply
both  sides of (\ref{babyrhp}) through by the square-root
of this object.  The $RHP$  is then transformed into a standard
form,  and one finds
\be
   G(z) =
             \frac{ \sqrt{\wt R(z)}}{2 \pi i}    \int_b^a   \frac{ (4 i s / \pi )}{ (\sqrt{\wt R(s)})_{+}}
           \frac{ds}{ s - z},
\ee
subject to the single moment condition,
\be
\label{moment}
   \frac{\pi}{2 i} =  \int_b^a s \sqrt{ \frac{a-s}{s-b}}  ds,
\ee
which holds since $z G(z) \ra  \frac{1}{\pi i }$ as $ z \ra \infty$.
The integral (\ref{moment}) is easily computed, and
$b = b(a) = \frac{1}{3}(a - 2 \sqrt{a^2 + 3})$  for positive $a \le 1$.
Also, by properties of the Stieltjes transform, it holds that
\[
   \psi_{GUE}^{\ep} (x) = \RE ( G_{+}(x) )   =   \frac{2}{\pi} \sqrt{ \frac{x -b }{a - x} }
  \Bigl( \frac{a -b }{2} - x \Bigr) ,  \   \  x \in (a, b). \\
\]
Now, for $\ep$ small  and  $x$ near $a$,
\be
     \psi_{GUE}^{\ep}(x)   =    \left(    ( 1- \ep - x)^{1/2}
              +  \frac{\ep}{2}  (1 - \ep - x)^{-1/2}  \right)  \,({2 \sqrt{2}}/{\pi}) \, (1+  \OO(\ep) ) ,
\ee
after substituting $a = 1- \ep$, $b = - 1 +  \OO(\ep^2)$ in the previous display.
This provides a model for the general formula (\ref{densatend2}).
\end{proof}

\begin{proof}[Proof of Lemma \ref{densitylemma2}]  As in the previous example, set $\ep = \alpha c_V^{-1} n^{-2/3}$ with
$a_{N+1} = a_{N+1}(\ep) = 1 - \ep$.    Following the standard approach,
the density is given by
\[
   \psi_{V,\ep}^{\sla}(z) =   \RE \Bigl( (G_{\ep})_+(z)  \Bigr),
\]
where,
\be
\label{Gep}
   G_{\ep}(z) =  \frac{1}{2 \pi i} \,   
   \frac{\sqrt{ R_{\ep}(z) }}{(z - a_{N+1}(\ep))}  \int_{J_{\ep}}   \frac{ i  V^{\pr}(s) / \pi }{(\sqrt{ R_{\ep}(s)})_+  }  ( a_{N+1}(\ep) - s)    \frac{ds}{s - z},
\ee
and
\[
   R_{\ep}(z) =  \prod_{\ell= 1}^{N+1} (z - b_{k-1}(\ep) ) ( z - a_{k}(\ep) ),  \   \    \   J_{\ep} =  \bigcup_{k=1}^{N+1} ( b_{k-1}(\ep) , a_{k}(\ep) ).
\]
We have normalized in this manner as, when $\ep \ra 0$, the endpoints $(b_k(\ep), a_{k}(\ep) )$  converge to their positions in the full line equilibrium
density $\psi_V$, and $R_{\ep}(z)$ and $J_{\ep}$ converge to $ R(z)$ and $J$ defined in (\ref{Rfactdef}) and directly above.

The integral in (\ref{Gep}) over $J_{\ep}$ can be replaced by
\[
   G_{\ep}^0(z)   =  \frac{1}{2}  \int_{\cal C}  \frac{ i  V^{\pr}(s) / \pi }{\sqrt{ R_{\ep}(s)}  }  ( a_{N+1}(\ep) - s)    \frac{ds}{s - z}
\]
for $\mathcal C$ a clockwise oriented contour surrounding both $J_{\ep}$ and $z$.  Now,
\beq
 \int_{{\cal C}}   \frac{ i  V^{\pr}(s) / 2 \pi }{\sqrt{ R_{\ep} (s)}  }  ( a_{N+1}(\ep) - s)    \frac{ds}{s - z}
 & = &
  ( a_{N+1}(\ep) - z )   \int_{{\cal C} }   \frac{ i  V^{\pr}(s) / 2 \pi }{\sqrt{ R_{\ep} (s)}  }   
  \frac{ds}{s - z}  -  \int _{\cal C}   \frac{ i  V^{\pr}(s) / 2\pi }{\sqrt{ R_{\ep} (s)}  }   {ds}
   \nonumber \\
 & \equiv &  ( a_{N+1}(\ep) - z ) \, h_{V,\ep}(z) +    C_{\ep},
\eeq
where $h_{V,\ep}(z)$ and $C_{\ep}$ are the same as $h_{V, n, \alpha}(z)$ and $C_{n,\alpha}$ in the statement of the Lemma. Since $h_V(z)$ figuring in
the definition of $\psi_V$ is exactly $\lim_{\ep \ra 0}  h_{V,\ep}$ (recall (\ref{equdens})),  it is left to prove that
\be
\label{Cep}
   C_{\ep} =  \int_{\cal C}   \frac{   V^{\pr}(s) }{\sqrt{ R(s)}  }  \frac{ds}{2 \pi i}    - \frac{\ep}{2} \,  \int _{\cal C}   \frac{ V^{\pr}(s)  }{\sqrt{ R(s)}  }  \frac{1}{s-1}  \frac{ds}{2 \pi i}
   + \OO(\ep^2).
\ee
Indeed,
\be
  C_0 =   \int_{\cal C}   \frac{  V^{\pr}(s)}{\sqrt{ R(s)}  }  \frac{ds}{2 \pi i}   \equiv 0,
\ee
by a moment condition for the full-line (or ``free") problem,  and the second integral in (\ref{Cep}) is exactly $h_V(1)$.
Further, the  
 endpoints $\{ b_k(\ep) \}$ and $\{ a_k(\ep) \}$  turn out to be  real analytic functions of $\ep$, 
 and we have 
\[
  C_{\ep} =   \frac{\ep}{2} \int_{\mathcal C}  \frac{V^{\pr}(s)}{ \sqrt{ R(s)} } \Bigr(  \sum_{k=1}^{N+1}     \frac{a_k^{\pr}(0)}{s - a_k(0)} +
  \frac{ b_{k-1}^{\pr}(0)}{s - b_{k-1}(0)} \Bigr)  \frac{ds}{2 \pi i}
  + \OO(\ep^2).
\]
The advertised $\OO(\ep)$ term will then arise from $a_{N+1}(0) = 1$ and $a_{N+1}^{\pr}(0) = -1$, and additional fact 
that  all other endpoints have vanishing first derivative at $\ep = 0$.

The idea behind verifying these last claims  is to view our constrained problem  as a perturbation of
the free problem at the modulation point $P$, where $a_{N+1}=1$.
Returning to (\ref{Gep}) set
$   \tilde R(z)=R(z)/(z-a_{N+1})^2$,
suppressing the dependence on $\ep \ge 0$.   
 For the constrained problem, the system of $2N+1$ modulation  equations 
 determining the endpoints are
 \be
\label{G1}
   T_j \equiv  \int_{J}\frac{V^{\pr}(s)}{\sqrt{ \tilde
   R(s)}_+  } s^j{ds}=0,\ \ \ 0\le j\le N-1,
\ee \be \label{G2}
    T_N \equiv  \int_{J}\frac{V^{\pr}(s)}{\sqrt{ \tilde
     R(s)}_+ } s^N{ds}= \frac{1}{\pi i},
\ee
the so-called moment conditions  along with the integral conditions
\be \label{G3}
   N_k \equiv \int_{\gamma_k}G(z) dz =0, \ \ \ 1\le k\le N,
  \ee where $\gamma_k$ denotes the loop around
    the cut $(b_{k-1},a_{k})$.
 Adding the additional relation
      \be
\label{G4}
   T_{-1} \equiv  \int_{J}\frac{V^{\pr}(s)}{\sqrt{ \tilde
   R(s)} _+}  \frac{ds}{s - a_{N+1}}=0
  \ee
gives the system for the free problem.  In particular, the moment conditions for the full problem
read $ \int_J V'(s)/\sqrt{R(s)}_{+}  s^j ds = 0$ for $0 \le j \le N$  and  $(\pi i )^{-1}$ for $j = N+1$, and
$T_{-1}$ takes you from the system (\ref{G1})--(\ref{G2}) to this one.

  Our regularity assumption for the free problem implies that the
  absolute value of the  $(2N+2)\times (2N+2)$ Jacobian of the
  map $\{a_j, b_k \} \mapsto \{ T_j, N_k\}$
  is bounded below by a positive constant at point $P$.  A full proof may be found in Section 12 of \cite{KM00}.
  In order to conclude all other endpoints are real-analytic functions of $a_{N+1}$, and so $\ep$, we must show
   that the $(2N+1)\times (2N+1)$ Jacobian of
  the modulation equations for the constrained problem is also
   bounded below by a positive constant at point $P$. (And then invoke the implicit function theorem). Clearly, this
   will hold if
   $$
       \frac{\partial T_j}{\partial
   a_{N+1}}   =  0  \mbox{ for } 0 \le j \le N,  \mbox{ and }   \frac{\partial N_k }{\partial
   a_{N+1}} =  0 \mbox{ for }  1 \le k \le N,
   $$
   as, if so, the $(2N+1) \times (2N+1)$ minor must be non-degenerate, less the full Jacobian is. 
   Note also that by a simple application of the chain rule, this will imply
    the vanishing of the first derivatives of all endpoints other than $a_{N+1}$ at
   $\ep = 0$, a fact we used above.  
   
   Finally,  
    $\frac{\partial }{\partial
   a_{N+1}} T_j  = \int_J V'(s)/\sqrt{R(s)}_{+}  s^j ds $ and the latter vanishes for $j \le N$
   by the moment conditions for the free problem.   Also, by linearity the vanishing 
   of the derivative of any $N_k$  will follow from
   $\frac{\partial G}{ \partial
   a_{N+1}}(P)=0$. We compute
   \beqn
   \frac{\partial G}{\partial
   a_{N+1}}  & = &  \frac{-1}{4 \pi i}   \sqrt{\tilde R(z) }\int_{J}\frac{V^{\pr}(s)}{\sqrt{
   R(s)} _+}  \frac{ds}{s - z}+\frac{1}{4 \pi i}
   \frac{\sqrt{\tilde R(z) }}{(z-a_{N+1})}\int_{J}\frac{V^{\pr}(s)(s-a_{N+1})}{\sqrt{
   R(s)} _+}  \frac{ds}{s - z}  \\ \nonumber
  & = &   \frac{1}{4 \pi i}\frac{\sqrt{\tilde R(z)
}}{(z-a_{N+1})}\int_{J}\frac{V^{\pr}(s)}{\sqrt{
   R(s)} _+}  {ds}, \nonumber
   \eeqn
   and note
   the last integral vanishes at point $P$ as the integral appearing reduces to that in condition (\ref{G4}).
   \end{proof}

\section{Steepest descent}
\setcounter{num}{3}
\setcounter{equation}{0}
\label{transformations}

\subsection{First transformation $ Y \mapsto T$}

As in \cite{DKMVZ99b} {Section 3.3},  we define
$T(z)$  for either $\alpha > 0$ or $\alpha<0$ by conjugation,
\be
\label{Tdef}
    T(z) =  e^{- \frac{n \ell}{2} \sigma_3 }  \, Y(z) \, e^{ \frac{ n \ell}{2}  \sigma_3}
          e^{-   {n}  g(z) \sigma_3 } ,
\ee
where $g(z)$ is the log-transform of either $\psi_V^{\sra}$ or $\psi_V^{\sra}$, and we
recall that $ \sigma_3 = \Bigl(  \ba{rr} 1 & 0 \\ 0 & -1 \ea \Bigr)$.    The jump
matrix for $Y(z)$ is transformed into,
\[
   V_T(z) =   \left(  \ba{cc}  e^{-n(g_+(z)  - g_{-}(z) )}  &  e^{n( g_+(z)  + g_{-}(z) - V(z) - \ell)} \\
                                                        0                    &    e^{-n(g_+(z)  - g_{-}(z) )}   \ea \right) ,   \   \   \ z \in \Gamma_{n, \alpha}.
\]
Next, using the relations (\ref{gp1}) and (\ref{gp4}) satisfied by
$g(z)$  and the fact that $e^{ng(z)}  \approx z^n$ for $z \ra \infty$,
we find that $T(z)$ is the unique solution of the following $RHP$.

\vspace{.2cm}

\noindent{\boldmath{$RHP$}  \bf{for} \boldmath{$T$}:}
We seek $T(z)$   analytic in  $ \C \backslash \Gamma_{n, \alpha}$,
with jump relations,
\be
\label{Tjumps}
\ba{ll}
T_+(z)  =  T_{-}(z)  \Bigl(   \ba{cc} e^{- n (  g_{+}(z)  -  g_{-}(z) )} & 1  \\ 0 &
                                           e^{n (  g_{+}(z)  -  g_{-}(z)  ) } \ea  \Bigr),
                    &                  z \in  \bar{J}   \\
T_+(z)  =  T_{-}(z)  \Bigl(   \ba{cc}  e^{-2 \pi i n \Omega_j }
    &  e^{- n (  g_{+}(z)   +  g_{-}(z)   - V(z) - \ell )  }  \\ 0
                                          &  e^{2 \pi i n \Omega_j}  \ea  \Bigr),
                                      &
                                             z \in (a_j, b_j), j = 1, \dots, N,
                                            \\
   T_+(z)  =  T_{-}(z)  \Bigl(   \ba{cc}  1
    &  e^{- n (  g_{+}(z)   +  g_{-}(z)   - V(z) - \ell )  }  \\ 0
                                          &  1  \ea  \Bigr),
                      &   z < b_0  \mbox{ or }   a_{N+1} < z < c_{n,\alpha},
\ea
\ee
and asymptotics,
\be
\label{Tasymp}
T(z) =  I +  \OO \Bigl( \frac{1}{z} \Bigr)   \   \   z \ra \infty.
\ee
That is, we now have an $RHP$ normalized at $\infty$.

\subsection{Second tranformation $ T \mapsto S$}
\label{Sproblemsec}

This step is the descent, transforming the oscillatory diagonal entries of the jump matrices
in the $RHP$ for $T(z)$ into exponentially decaying off-diagonal entries in an equivalent
problem for a function $S(z)$.

For the $\alpha > 0$ case we follow \cite{DKMVZ99b} without change.
For   $z \in  \C \backslash \Gamma_{n, \alpha}$ in the region of analyticity of $h_V(z)$, recall
(\ref{equdens}), define
\[
    \phi(z) =   \int_{a_{N+1}}^z  {R^{1/2}(s)} h_V(s) \, ds,
\]
where $a_{N+1} = 1$ and the path of integration does not cross $\Gamma_{n,\alpha}$.
From (\ref{gp1})  and (\ref{gp4}) we have that, for each $z \in(b_{j-1}, a_j) \subset J$,
\beq
  g_{+}(z) - g_{-}(z)  & =  & 2 \pi i \int_z^{a_{N+1}} \psi_V(s) ds    \\
                                   & = & \int_z^{ a_{j}} R_{+}^{1/2}(s) h(s) ds +  2 \pi i \int_{b_j}^{a_{N+1}}  \psi(s) \, ds \nonumber
                                     \\
                                   & = & -   \phi_{+}(z) =    \phi_{-}(z).  \nonumber
\eeq
That  is, $ -  \phi(z) $ and $  \phi(z)$ are analytic continuations of $g_{+}(z)  - g_{-}(z)$
above and below each band.
Also, $\phi_{+}$ and $\phi_{-}$ are purely imaginary on each band and an easy exercise
using the Cauchy-Riemann conditions shows that,
\be
\label{rephi}
\RE \phi(z) < 0,    \mbox{  for small  }  \IM (z)   \neq 0 \mbox{ and  }  \RE( z) \in J.
\ee
Since the $g$-function tied to $\psi_V^{\sla}$ (for $\alpha < 0$) satisfies the same basic relations
we can define extensions for $
 g_{+}(z) - g_{-}(z)   =   2 \pi i \int_z^{c_{n,\alpha}} \psi_V^{\sla}(s) ds$, and so $\phi$,  in the same
 way.

With these properties of $\phi$ in mind, the jump contour is deformed off the line by opening
a lens around each band based on the factorization
\beqn
  \left(   \ba{cc}     e^{-n(g_{+}(z) - g_{-}(z))} & 1 \\ 0 &  e^{n(g_{+}(z) - g_{-}(z))}  \ea  \right)
 & =  & \left(   \ba{cc}     1 & 0 \\ e^{ n \phi_{-}(z) }  & 1 \ea  \right)
     \left(   \ba{cc}     0 & 1 \\ -1 &  0  \ea  \right)
     \left(   \ba{cc}       1 & 0 \\ e^{ n \phi_{+}(z)}  & 1 \ea  \right) \\
     & \equiv &  { B}_{-}^{-1}(z)  \left(   \ba{cc}     0 & 1 \\ -1 &  0  \ea  \right)  { B}_{+}(z).
\eeqn
Set,
\be
\label{Sdef}
    S(z) =  \left\{   \ba{ll} T(z),  &  z \mbox{ in the exterior of each lens}, \\
                                        T(z)    B_{\pm}^{-1}(z),
                                                  &  z \mbox{ in the upper/lower part of each lens}, \ea \right.
\ee
in which what is meant by a lens,
 and the resulting contour with lenses $\Sigma_{n, \alpha}$, is spelled out in
Figure \ref{openfig}.
We are led to:

\vspace{.2cm}
\noindent{\boldmath{$RHP$}  \bf{for} \boldmath{$S$:}}
$  S(z)$   is   analytic in $  \C \backslash \Sigma_{n, \alpha}$,
satisfies  $
     S(z) = I + \OO(\frac{1}{z} )$  as  $ z \ra \infty$ with   $z \notin \Sigma_{n, \alpha}$,
along with the jump relations,
 \be
\ba{ll}
\label{Sjumps1}
    S_+(z) = S_{-}(z)  \left(   \ba{cc}       1 & 0 \\ e^{n \phi(z)}  & 1 \ea  \right),   & z \in \Sigma_{n, \alpha} \cap \C_{\pm}, \\
    S_+(z) = S_{-}(z)   \left(   \ba{cc}      0 & 1 \\  - 1 & 0  \ea  \right),   & z \in  J, \\
    S_+(z)  = S_{-}(z)  \Bigl(   \ba{cc}  e^{-2 \pi  i \Omega_j}
    &  e^{- n ( g_{+}(z)   +  g_{-}(z)   - V(z) - \ell )  }  \\ 0
                                          &  e^{ 2 \pi i \Omega_j}   \ea  \Bigr),   &  z \in (a_j, b_j) , \, j = 1, \dots, N,  \\
    S_{+}(z)   = S_{-}(z)  \Bigl(   \ba{cc}  1
    &  e^{- n ( g_{+}(z)   +  g_{-}(z)   - V(z) - \ell )  }  \\ 0
                                          &  1  \ea  \Bigr),   &
                                       z < b_0  \mbox{ or }
                         a_{N+1} < z <   c_{n,\alpha}.
\ea
\ee
Note that (\ref{rephi}) implies the factor $e^{ n \phi(z)}$  in (\ref{Sjumps1}) decays exponentially as $n \ra \infty$.
Further, in the regular case $V$  the exponent $ g_{+}(z)  + g_{-}(z) - V(z) - \ell$ appearing in the third and fourth
jump matrix  is strictly negative in the gaps or past the ends of support.
It follows that  the corresponding entries also decay exponentially  as $n \ra \infty$.


\begin{figure}
 \centerline{   
             \scalebox{0.65}{
             \includegraphics{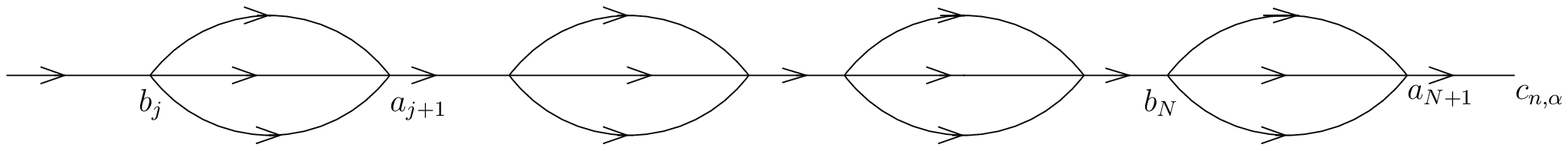}
               }
            }
 \caption{The new contour $\Sigma_{n, \alpha}$ (pictured for $\alpha >0$) with a lens opened around each band.}
 \label{openfig}
 \end{figure}

\subsection{Model Problem}
\label{model}

From the discussion at the end of the previous section we expect the
leading order asymptotics to be governed by the $2 \times 2$ matrix
$P^{\infty}(z)$ which solves the following model problem.

\vspace{.2cm}

\noindent{\boldmath{$RHP$} \bf{for} \boldmath{$P^{\infty}$}:}
$ P^{\infty}(z)$ is analytic in  $\C \backslash  [b_0, a_{N+1} ]$,
$ P^{\infty}(z) = I + \OO(1/z)$,  as $   z \ra \infty,$ and
\be
\ba{ll}
\label{RHPPinf}
  P_{+}^{\infty}(z)  =   P_{-}^{\infty}(z)  \left( \ba{rr} 0 & 1 \\  - 1 & 0 \ea \right),   &  z  \in  J,  \\
  P_{+}^{\infty}(z)  =   P_{-}^{\infty}(z)  \left( \ba{rr} e^{ 2 \pi i \Omega_j}
    & 0 \\  0 & e^{-2 \pi i \Omega_j} \ea \right) ,   &    z   \in ( a_j, b_{j} ),  \   j = 1, \dots, N. \\
\ea
\ee
Though we are led to this problem by considering  the $n \ra \infty$
for the  jumps of $S(z)$, the bands and gaps over which the jumps
of $P^{\infty}$ are defined should
still be taken in their finite $n$ positions for the $\alpha < 0$ case.
\vspace{.2cm}

While the particulars of $P^{\infty}$ will not affect the parametrix we eventually construct
about $z=1$, it is required to demonstrate that the above  problem does indeed
have a solution.  Fortunately, this has already been accomplished
in \cite{DKMVZ99b}, where  it is proved that (\ref{RHPPinf}) has a unique
solution 
satisfying $\det P^{\infty}(z) \equiv 1$.

\subsection{Last transformation $S \mapsto R$}

Since the convergence of the jumps for $S(z)$ to those for $P^{\infty}$ is not uniform near the endpoints, we have to perform
a local analysis at each of the endpoints $a_j, b_j$.   The analysis
at $a_{N+1}$ is particular to the present endeavor and is the subject of the next section,  for the rest though we may
again refer to \cite{DKMVZ99b}.

For $n$ large enough, each  $x_0 =  a_j, b_j$ is regular and there will be no interior ``singular" points.
Surround each $x_0$  
 by a small disk and consider the set of local RHP's: 
 \be
\ba{l}
   P_{x_0}(z)  \mbox{ analytic in }  \{ |z - x_0| < \ep^{\pr} \} \backslash \Sigma_{n,\alpha}  \mbox{ for a } \ep^{\pr} > \ep, \\
   P_{x_0}(z) \mbox{ and } S(z) \mbox{ share jump conditions on } \Sigma_{n,\alpha} \cap \{ |z-x_0| < \ep \}, \\
   P_{x_0}(z) (P^{\infty})^{-1}(z) = I + \OO(n^{-\kappa}), \mbox{ uniformly for } |z- x_0| = \ep \mbox{ with a } \kappa > 0.
\ea
\ee
The last condition matches asymptotics of $P_{x_0}(z)$ to those outside its disk.
Granted a solution we define
\be
\label{Rdef}
  R(z) = \left\{ \ba{ll} S(z) (P^{\infty})^{-1}(z),  &  z  \mbox{ outside the disks,} \\
                                   S(z) (P_{\bullet})^{-1}(z)  &  z \mbox{ inside the disks,} \ea \right.
\ee
in which $P_{\bullet}(z)$ stands in for whichever $P_{x_0}(z)$ corresponds to the given disk.
At all $x_0 \neq a_{N+1}$,  it is well known
$P_{\bullet}(z)$ is given explicitly in terms
of Airy functions;  the connected $RHP$ is in fact $RHP^{\sra}$ with  $\alpha = \infty$,
and the error exponent is $\kappa = 1$.  Assuming that parametrices for 
 $z = a_{N+1}$ exist, $R(z)$ will be analytic
off the system of contours described by Figure \ref{Rcontours}.  (While there is an isolated singularity
in the second column of $R(z)$ at $a_{N+1} = c_{n, \alpha}$ traced back to that in $Y(z)$ at the same point, it is 
logarithmic and so removable).  Next,
since $S(z)$ and $P^{\infty}(z)$ are normalized at infinity and $\det P^{\infty}(z) = 1$, it follows from
its definition that $R(z)$ is also normalized at infinity.   These facts are key ingredients of the result
that, uniformly for $z \in \C \backslash \Sigma_R$,
\be
   R(z)  = I + \OO(n^{-\kappa}),     \    \   \  n \ra \infty,
\ee
with proof identical to that in \cite{DKMVZ99b}.
It follows that $R(z)$,  and also $\frac{d}{dz} R(z)$, are uniformly bounded for large $n$, and
from (\ref{Rdef}) that $\det (R(z)) \equiv 1$.


\begin{figure}[h]
\centerline{   
             \scalebox{0.55}{
             \includegraphics{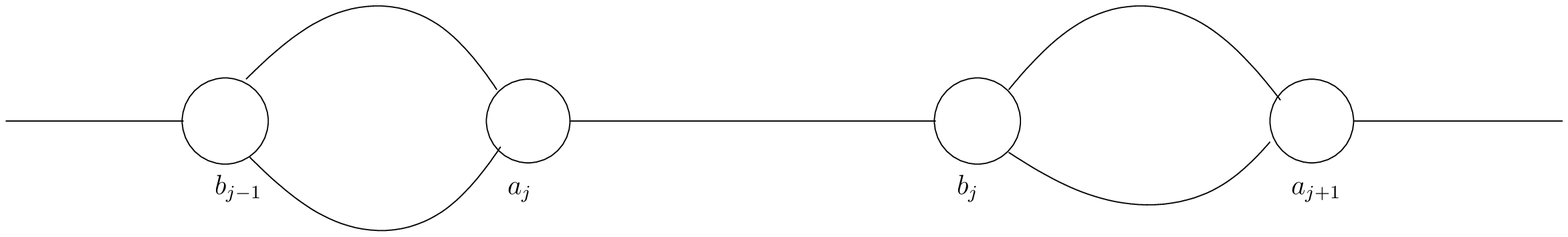}
                }
             }
\caption{Part of the contour $\Sigma_R$.}
\label{Rcontours}
\end{figure}

\section{Paramatrices at the right edge}
\setcounter{num}{4}
\setcounter{equation}{0}
\label{paramatrix}

At last we build  paramatrices for the RHP's
($\alpha > 0$ or $\alpha < 0$)  in a neighborhood of $z = a_{N+1}$
as $n \ra \infty$; these 
are described in terms of the solutions
 of $RHP^{\sra}$
and $RHP^{\sla}$.

When $\alpha > 0$, we have that  $a_{N+1} \equiv  1 < c_{n, \alpha}$.  It is convenient at this point
to bring in the  fact that,
\be
\label{phitoright}
  g_+(z) + g_{-}(z) + V(z) - \ell =  -  \int_{1}^z  {R^{1/2} (s)} h_V(s)  ds  = -  \phi(z),
\ee
for real $z$ with $|z| > 1$.  Then, for a fixed
$U_{\ep} = \{ z : |z - 1| < \ep \}$
we require a $2 \times 2$  $P^{\sra}(z)$  which is analytic in $U_{\ep} \backslash
\Sigma_{n,\alpha}$ and satisfies,
\be
\label{inpar}
 \ba{ll} ( P^{\sra}(z))_{+} = ( P^{\sra}(z))_{-}  \left( \ba{cc}  1  &  0 \\ e^{n \phi(z)}& 1  \ea  \right),
     &  z \in ( \Sigma_{n, \alpha}  \cap U_{\ep})  \cap \C_{\pm},      \\
   ( P^{\sra}(z))_{+} = ( P^{\sra}(z))_{-}  \left( \ba{cc}    0 & 1 \\ -1  & 0 \ea  \right),  &
    z \in (1 - \ep, 1),    \\
   ( P^{\sra}(z))_{+} = ( P^{\sra}(z))_{-}  \left( \ba{cc} 1 & e^{-  n \phi (z)} \\
                                                                 0 & 1     \ea  \right),   &    z \in  [ 1 , 1 +  \frac{{\alpha}}{c_V {n^{2/3}}} ),
\ea
\ee
with
\be
\label{inparasymp}
\hspace{-1.0cm}
    P^{\sra}(z) (P^{\infty}(z))^{-1}  =  I + \OO \Bigl( \frac{1}{n} \Bigr),  \hspace{1.7cm}
         z \in  \partial U_{\ep} \backslash \Sigma_{n,\alpha}.
\ee
The asymptotics (\ref{inparasymp})
entail a matching condition between the inner and outer solutions.

 If instead $\alpha < 0$, the endpoint depends on $n$ as in
 $ a_{N+1} = 1 + \alpha / c_V n^{2/3} = c_{n,\alpha}$. It is
 convenient though
 to keep the same neighborhood $U_{\ep}$ fixed about $z = 1$ and the problem is to find
 $P^{\la}(z)$, analytic in $U_{\ep} \backslash \Sigma_{n, \alpha}$ with jump conditions
\be
\label{outpar}
 \ba{ll} ( P^{\sla}(z))_{+} = ( P^{\sla}(z))_{-}  \left( \ba{cc}  1  &  0 \\ e^{ n \phi(z)}& 1  \ea  \right),
     &  z \in  ( \Sigma_{n, \alpha} \cap  U_{\ep} ) \cap \C_{\pm},      \\
   ( P^{\sla}(z))_{+} = ( P^{\sla}(z))_{-}  \left( \ba{cc}    0 & 1 \\ -1  & 0 \ea  \right),  &
    z \in (1 - \ep,  1 + \frac{\alpha}{c_V  n^{2/3}} ),
\ea
\ee
and  again,
\be
\label{outparasymp}
\hspace{-1.3cm}
    P^{\sla}(z) (P^{\infty}(z))^{-1}  =  I + \OO \Bigl( \frac{1}{n} \Bigr),  \hspace{1.4cm} z \in  \partial U_{\ep} \backslash \Sigma_{n, \alpha} .
\ee

These problems are now mapped onto those for $M^{{\sra}}(\zeta)$ and $M^{\sla}(\zeta)$.
For $z \in U_{\ep}$ define two changes of variables $z \ra \zeta = \zeta_n^{\lra}(z)$, via
\be
\label{varchange1}
      n \phi(z)  = \frac{4}{3} \zeta^{3/2},  \   \   \   \mbox{ for }  \alpha > 0,
\ee
and
\be
\label{varchange2}
      n \phi (z)  =  \frac{4}{3} \zeta^{3/2}  + 2 \alpha  \zeta^{1/2},   \  \   \   \mbox{ for }   \alpha < 0.
\ee
Also, set
\be
\label{matching}
   E_n(z)  =  P^{\infty}(z)  \frac{1}{\sqrt{2i}}  \left(  \ba{cc}  i & - i \\ 1 & 1 \ea \right)   \,  \left(   \zeta_n(z)  \right)^{  \sigma_3/4}.
\ee

\begin{lemma}
\label{paralemma}
The $RHP$s   ((\ref{inpar}), (\ref{inparasymp}))  and  ((\ref{outpar}),   (\ref{outparasymp}))
are solved by
\be
    P^{\sra}(z)  =  E_{n}(z)  M_n^{\sra} ( \zeta_n^{\sra}(z) )    \   \    \mbox{   and   }   \  \   P^{\sla}(z)  =  E_{n}(z)  M^{\sla} ( \zeta_n^{\sla}(z) ).
\ee
Here $\zeta_n^{\sra}(z)$ and $\zeta_n^{\sla}(z)$ is given by (\ref{varchange1}) or (\ref{varchange2}) respectively.
Also, $M_{n}^{\sra}(\zeta)$  represents the
solution of $RHP^{\sra}$ in which the jump over $[0,\alpha)$ is replaced by the same jump over $[0, \alpha_n)$
with  $\alpha_n \, \ra \, \alpha$ as $n \ra \infty$  defined below in (\ref{movingboundary}).
\end{lemma}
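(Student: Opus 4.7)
My plan is to verify the four defining conditions of the parametrix—analyticity in $U_{\ep}$ off the local contour, the prescribed jumps, removability of the apparent singularity at $z=1$, and the matching to $P^{\infty}$ on $\partial U_{\ep}$—by reducing each, through the change of variable $\zeta = \zeta_n^{\lra}(z)$, to a corresponding property of $M^{\lra}$.

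I would first show that $\zeta_n^{\sra}$ and $\zeta_n^{\sla}$ are local biholomorphisms of a neighborhood of $z = 1$ onto a neighborhood of $\zeta = 0$ for all $n$ sufficiently large. In the $\alpha > 0$ case, Lemma \ref{firstdensitylemma} gives $\phi(z) = \tfrac{2}{3}\beta_V(1)(z-1)^{3/2}(1 + O(z-1))$, so (\ref{varchange1}) has a unique analytic root $\zeta_n^{\sra}(z) = c_V n^{2/3}(z-1)(1 + O(z-1))$ with $c_V = (\beta_V(1)/2)^{2/3}$; one then sets $\alpha_n = \zeta_n^{\sra}(c_{n,\alpha})$, and $\alpha_n \to \alpha$ by construction. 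In the $\alpha < 0$ case, integrating the two-term expansion (\ref{densatend2}) produces $n\phi(z)$ of the form $\tfrac{4}{3}\zeta^{3/2} + 2\alpha\zeta^{1/2}$ to leading order with $\zeta = c_V n^{2/3}(z - c_{n,\alpha})$, and the implicit function theorem supplies $\zeta_n^{\sla}$ sending $c_{n,\alpha}$ to $0$. One then arranges the local lenses of $\Sigma_{n,\alpha}$ to be the preimages under $\zeta_n^{\lra}$ of the rays $\arg \zeta = \pm 2\pi/3$; the exponent in the $(2,1)$-entry of the $M^{\lra}$-jump then equals $n\phi(z)$ tautologically, matching (\ref{inpar}) and (\ref{outpar}). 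The antidiagonal jump on $(-\infty,0)$ for $\zeta$ pulls back to the same jump on $(1-\ep,1)$ (resp.\ on $(1-\ep, c_{n,\alpha})$), and in the $\alpha > 0$ case the upper-triangular $M^{\sra}$-jump on $(0,\alpha_n)$ with entry $e^{-4\zeta^{3/2}/3}$ pulls back, via (\ref{phitoright}), to the required $e^{-n\phi(z)}$ jump on $[1,c_{n,\alpha})$.

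Third, I would show that the prefactor $E_n(z)$ is analytic in $U_{\ep}$. Across the preimage of the negative $\zeta$-axis, $(\zeta^{\sigma_3/4})_+ = (\zeta^{\sigma_3/4})_- \cdot i\sigma_3$ while $P^{\infty}_+ = P^{\infty}_-\left(\begin{smallmatrix} 0 & 1 \\ -1 & 0 \end{smallmatrix}\right)$ by (\ref{RHPPinf}); using $M_0 \equiv \tfrac{1}{\sqrt{2i}}\left(\begin{smallmatrix} i & -i \\ 1 & 1 \end{smallmatrix}\right)$, a direct computation verifies $\left(\begin{smallmatrix} 0 & 1 \\ -1 & 0 \end{smallmatrix}\right) M_0\, i\sigma_3 = M_0$, so the two jumps cancel exactly in $E_n = P^{\infty} M_0 \zeta^{\sigma_3/4}$. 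A power count leaves at most a quarter-power singularity at $z=1$, which is removable in the sense required by the endpoint behavior of $P^{\lra}$. Finally, the matching (\ref{inparasymp})/(\ref{outparasymp}) follows from (\ref{Masymp1}): on $\partial U_{\ep}$ one has $|\zeta_n^{\lra}(z)| \asymp n^{2/3}$, so
\begin{equation*}
M^{\lra}(\zeta_n^{\lra}(z)) = (\zeta_n^{\lra}(z))^{-\sigma_3/4}\,\tfrac{1}{\sqrt{2}}\left(\begin{smallmatrix} 1 & 1 \\ -1 & 1 \end{smallmatrix}\right) e^{-\pi i \sigma_3/4}\bigl(I + O(n^{-1})\bigr),
\end{equation*}
and the identity $M_0 \cdot \tfrac{1}{\sqrt{2}}\left(\begin{smallmatrix} 1 & 1 \\ -1 & 1 \end{smallmatrix}\right) e^{-\pi i \sigma_3/4} = I$ (a short direct check) collapses the leading order, yielding $P^{\lra}(z)(P^{\infty}(z))^{-1} = I + O(n^{-1})$.

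The principal obstacle I expect is bookkeeping the branch conventions—tracking the orientation of the negative $\zeta$-axis relative to its preimage in $U_{\ep}$, and verifying in the $\alpha<0$ case that the segment $[0,|\alpha|]$ in the $\zeta$-plane does not intersect the image $\zeta_n^{\sla}(U_{\ep})$ in a way that would force a spurious jump on pullback; the square-root vanishing of $\psi_V^{\sla}$ at $c_{n,\alpha}$ from Lemma \ref{densitylemma2} together with the principal-branch choice should ensure this, but the argument must be made explicit.
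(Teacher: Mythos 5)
Your proposal follows essentially the same route as the paper: introduce the conformal changes of variable via the local expansions (\ref{densatend1}) and (\ref{densatend2}), check that the pulled-back jumps of $M^{\lra}$ match those required of $P^{\lra}$, verify that $E_n$ is analytic, and read the matching condition off (\ref{Masymp1}) together with the scaling $|\zeta_n(z)|\asymp n^{2/3}$ on $\partial U_\epsilon$. The one place where you do more than the paper is the explicit check that $E_n$ has no jump (the paper delegates this to \cite{DKMVZ99a}); your computation with $\left(\begin{smallmatrix}0&1\\-1&0\end{smallmatrix}\right)M_0\,i\sigma_3=M_0$ is correct and makes the argument self-contained, so this is a welcome addition rather than a deviation. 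Two small remarks. First, the concern you flag in your final paragraph about the segment $[0,|\alpha|]$ in the $\zeta$-plane for $\alpha<0$ is unfounded: $RHP^{\sla}$ carries no jump anywhere on $(0,\infty)$, only on $(-\infty,0]$ and the two rays, so nothing can be pulled back from there; the only genuine branch-tracking issue is the choice of $(\wh\beta)^{2/3}$ in (\ref{negphi}) positive near $z=1$, which the paper makes explicit and you should too. Second, when you say "the implicit function theorem supplies $\zeta_n^{\sla}$," note that the paper actually constructs $\zeta_n^{\sla}$ as the explicit analytic expression $n^{2/3}(z-c_{n,\alpha})(\wh\beta(z))^{2/3}$, with (\ref{varchange2}) then holding up to the $O(n^{-2/3})$ term carried into $\alpha$; this is slightly cleaner than an implicit-function argument and is what is needed to get the error rate in Claim \ref{claim1}.
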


\begin{proof} For $\alpha > 0$, from (\ref{densatend1}) we have that: with $z$ close to $1$,
\be
\label{posphi0}
   n \phi(z) =   n  \int_{1}^z (s-1)^{1/2}  \beta_V(s) ds  =   \frac{4}{3} n (s- 1)^{3/2}  \, \wt{\beta}(z),
\ee
where $\wt{\beta}(z)$ inherits the analticity properties of $\beta_V$.  Now set
\be
\label{posphi}
  \zeta_n(z) =    n^{2/3}  (z - 1)  (  \wt{\beta}(z)   )^{2/3}.
\ee
Along with being analytic in  $U_{\ep}$,  $\wt{\beta}(1) > 0$, and the branch may be chosen so that,
$(\wt \beta(1))^{2/3} > 0$.  Then, by choice of $\ep$,  $\zeta_n(z)$ maps  $U_{\ep}$
 one-to-one and onto an open neighborhood of  $\zeta = \zeta_n(1) = 0$.  Further,  $\zeta_n(U_{\ep} \cap \R)
 \subset \R$,  $\zeta_n(U_{\ep} \cap \C_{\pm}) \subset \C_{\pm}$, and those parts of the
$z$-contour $(\Sigma_{n,a}  \cap U_{\ep}) \cap \C_{\pm}$ can be chosen so that their images
are $\arg \zeta \equiv  \pm \frac{2}{3} \pi$.  Last, $z \in [1, 1 + \alpha/ c_V n^{2/3} ]$ is mapped to
\be
\label{movingboundary}
   \zeta \in \left[  0,   \alpha \, (  \wt{\beta}( c_{n,\alpha}) )^{2/3} / {c_V})  \right] \equiv
  [ 0, \alpha_n]   \,  \ra  \, [ 0 ,     \alpha  (   \beta_V(1) /2  )^{2/3} / c_V ]   = [0, \alpha],
\ee
as $n \ra \infty$, providing the definition of  $\alpha_n$.

When $\alpha < 0$,   we have from (\ref{densatend2})  that,
\beq
\label{negphi}
   n \phi(z) & = &    n   \int_{c_{n,\alpha}}^z   \Bigl( ( s-  c_{n, \alpha})^{1/2}  \beta_{V, n, \alpha}^{(a)} (s)   - \alpha c_V^{-1} n^{-2/3}
                                (s - c_{n,\alpha})^{-1/2}   \beta_{V,n, \alpha}^{(b)}(s)  \Bigr) ds     \\
                   & = &   \Bigl\{ \frac{4}{3}  n ( z-  c_{n, \alpha})^{3/2}     +  \left( \frac{2 \alpha}{ c_V}   n^{1/3} + \OO( n^{-1/3} )  \right)
                                (z - c_{n,\alpha})^{1/2}  \Bigr\}  \wh \beta(z).
                                \nonumber
\eeq
Note the change of sign:  $(s - c_{n, \alpha})_+^{1/2} =    -   (s - c_{n,\alpha})_+^{-1/2}$.
Here  again $\wh \beta(z) $  is analytic and positive in a fixed neighborhood of $1$, and so $c_{n,\alpha}$,
for all large enough $n$.   (That $\beta_{V,n,\alpha}^{(a,b)}(z)$ are analytic near $1$ and differ by $\OO(n^{-2/3})$
at $z = c_{n,\alpha}$ is used.)  Choosing  the ${2/3}$-root of $\wh \beta$ positive in that same neighborhood, it follows that
(\ref{varchange2}) has the required properties: $U_{\ep}$   is mapped one-to-one and onto a neighborhood
of $\zeta= \zeta_n(c_{n,\alpha})  = 0$,  $\zeta_n( \R \cap U_{\ep})$ is real, and the segments of $\Sigma_{n,a}$
can be chosen to map onto $ \arg \zeta = \pm \frac{2}{3} \pi$.

Plainly, $M_{n}^{\sra}(\zeta_n(z))$ and $ M^{\sla}(\zeta_n(z))$ satisfy the jumps specified for $P^{\sra}(z)$ and $P^{\sla}(z)$.
Next, as fully explained in \cite{DKMVZ99a},  $E_{n}(z)$ is analytic in $U_{\ep}$ and so does not affect the jump
relations.  Briefly, $P^{\infty}(z)$ has a singularity of the form $(z - a_{N+1} )^{ - \sigma_3 /4}$ at the right edge,
and this is compensated by the appearance of $( \zeta_n(z))^{\sigma_3/4}$  in (\ref{matching}) and the fact
that $\zeta_n(z) \approx C  n^{2/3} (z - a_{N+1}) $ whether $\alpha$ is positive or negative.  The asymptotics
(\ref{inparasymp}) and (\ref{outparasymp}) follow from the fact that there is a constant $c > 0$
with $|\zeta_n(z)| > c n^{2/3}$ uniformly on $|z| = \ep$ and $n \ra \infty$ and the behavior as $\eta \ra \infty$
of $M^{\lra}(\zeta)$  stated in (\ref{Masymp1}).
\end{proof}

\begin{remark}  Along with $E_n(z)$ being analytic in a neighborhood of $z=1$, it also follows from
the form of $P^{\infty}(z)$ that both $E_n(z)$ and $ \frac{d}{dz} E_n(z) $ are uniformly bounded in
that neighborhood.  Further,  $\det E_n(z) \equiv 1$ since $\det P^{\infty}(z) \equiv 1$.  
\end{remark}

\section{Proof of Theorem \ref{maintheorem}}
\setcounter{num}{5}
\setcounter{equation}{0}
\label{mainlimit}

The derivation below borrows heavily from  \cite{KV03}.  First note the expression for 
 $L_{n, \alpha}$ in terms
of the $Y$ matrix (defined in (\ref{YRHP})):
\be
\label{kernelre1}
    L_{n,\alpha}(z,w)  = - \frac{1}{2 \pi i}   e^{ - \frac{1}{2} V(z) - \frac{1}{2} V(w)} \,
      \frac{Y_{11}(z) Y_{21}(w) - Y_{21}(z) Y_{11}(w)}{ z- w}.
\ee
We are interested in the asymptotics  of the above kernel for real $z$ and $w$
to the right of $a_{N+1}$, the endpoint of support of the density of states.
 Unravelling the sequence
of transformations leading from $Y$ to $R$, we have by (\ref{Tdef}), (\ref{Sdef}) and (\ref{Rdef}),
\be
\label{writeY}
  Y(z) =   Y_{n, \alpha}(z) =   \sqrt{2 \pi}  e^{\frac{\pi i}{4}}
  e^{\frac{n \ell}{2} \sigma_3} R(z)
  E_n(z) {\bf M}_{n,\alpha} ( z )  e^{ \frac{1}{2} n \phi(z)  \sigma_3}
  e^{- \frac{n \ell}{2} \sigma_3}  e^{n g(z) \sigma_3}
\ee
in which we have made the definition:
\be
\label{bMdef}
  {\bf M}_{n, \alpha}(z)  = \left\{  \ba{ll} \frac{e^{\pi i /4}}{\sqrt{2 \pi}} M_n^{\sra}( \zeta_n^{\sra}( z) )
  e^{-\frac{1}{2} n  \phi(z) \sigma_3} &  \mbox{ for }   \alpha > 0 \\
  \frac{e^{\pi i /4}}{\sqrt{2\pi}}    M^{\sla}(\zeta_n^{\sla}(z) )
  e^{-\frac{1}{2} n \phi(z) \sigma_3 } &
  \mbox{ for }   \alpha <  0 \ea \right..
\ee
Here
$\zeta_n^{\sra} (z)$ and $\zeta_n^{\sla}$  are given by (\ref{varchange1}) and (\ref{varchange2}) ,
and $M_n^{\sra}$ is as in Lemma \ref{paralemma}.
It follows that,\footnote{If  $z$, $w$  are taken to the left of $a_{N+1}$, there is an additional factor of
$\left( \ba{cc} 1 & 0 \\ e^{ - n \phi(z)} & 1 \ea   \right)$ in (\ref{writeY}) arising from opening
the lenses in the $T \mapsto S$ step.  In that case,  $Y_{11}$
and $Y_{21}$  are linear combinations of the first and second rows of ${\bf M}_{n, \alpha}$
respectively.}
\be
\label{Ycol}
  \left(  \ba{c}  Y_{11}(z) \\ Y_{21}(z) \ea \right)   =   \sqrt{2 \pi}  e^{\frac{\pi i}{4}}
    e^{n( g(z) -  \frac{\ell}{2} + \frac{1}{2} \phi(z)) }  e^{\frac{n \ell}{2} \sigma_3}
   R(z)  E_n(z)
   \left(  \ba{c}  ({\bf M}_{n,\alpha} )_{11} (\zeta_n(z) ) \\  ({\bf M}_{n, \alpha} )_{21}  (\zeta_n(z)) \ea \right).
\ee
Now,
for either the $(\sra)$ or $(\sla)$ case and  real $z > $  $a_{N+1} = 1$ or $a_{N+1} =  1 - \alpha/ c_V n^{2/3}$,
we have that,
\[
     g(z) -  \frac{1}{2} V(z)  - \frac{\ell}{2}   =  \frac{1}{2} ( g_{+}(z) + g_{-}(z) - V(z) - \ell ) =  - \frac{1}{2} \phi( z),
\]
recall (\ref{phitoright}).
Further, set
\[
     K(z) \equiv  R(z)  E_n(z),
\]
and recall that  both $R(z)$ and $E_n(z)$ have determinant $ = 1$, are analytic in a neighborhood
of $z=1$, and, along with their derivatives, are uniformly bounded there.  Obviously, $K(z)$ inherits these
properties.
Next change variables as in
\[
   z \mapsto x_n =  1 + \frac{x}{c_V n^{2/3}}   ,  \   \    w \mapsto y_n =  1 + \frac{y}{c_V n^{2/3}},
\]
with $x, y $ $ > \alpha$, and  summarizing the steps thus far we have:
\beq
\label{Lker2}
\lefteqn{     \wh L_{n, \alpha}(x_n , y_n)  \equiv    \frac{1}{c_V n^{2/3}} L_{n, \alpha} ( x_n, y_n)} \\
       & =  &  -
      \frac{1}{ 2 \pi i ( x - y) }  \det \left(  \ba{cc}  e^{ - \frac{n \ell}{2} }  e^{- \frac{1}{2} n V(x_n)}   Y_{11}(x_n) &
                                                                               e^{ - \frac{n \ell}{2} }  e^{- \frac{1}{2} n V(y_n)}   Y_{11}(y_n) \\
                                                                                e^{  \frac{n \ell}{2} }  e^{-  \frac{1}{2} n V(x_n)}   Y_{21}(x_n) &
                                                                                e^{  \frac{n \ell}{2} }  e^{- \frac{1}{2} n V(y_n)}   Y_{21}(y_n)   \ea \right)
                                                                                          \nonumber \\
       &  = &  \frac{1}{(x-y)}
                        \det \left[
                K(x_n) \left(  \ba{ll}    ({\bf M}_{n ,\alpha})_{11} ( {x}_n ) & 0
                                                 \\    ({\bf M}_{n,\alpha})_{21}( x_n)  & 0 \ea \right)
              +  K(y_n) \left( \ba{ll}  0 &   ({\bf M}_{n,\alpha})_{11} ({y}_n)       \\ 0 &
                                    ({\bf M}_{n,\alpha})_{21} ({y}_n)                                        \ea \right)
               \right]. \nonumber
\eeq
Before further manipulations we record the following two facts.

\begin{claim}
\label{claim1}
As $n \ra \infty$,
\be
\label{zetalimit}
    \zeta_n^{\ra} ( x_n) =   x ( 1 + \OO( n^{-2/3})),  \ \ \mbox{ and }  \ \       \zeta_n^{\sla}(x_n)  =  (x-\alpha)  ( 1 + \OO(n^{-2/3}) ),
\ee
uniformly for $x$ in compact sets of $(\alpha, \infty)$. Also,
\be
\label{philimit}
   n \phi(x_n)  =   n \phi_{n,\alpha} (x_n) = \left\{   \ba{ll}  \frac{4}{3} x^{3/2} \, ( 1 + \OO( n^{-2/3} ) ), &  \alpha > 0, \\
                                       ( \frac{4}{3}(x-\alpha)^{3/2} + 2 \alpha (x-\alpha)^{1/2} )  (1+ \OO( n^{-2/3}))  & \alpha < 0, \ea  \right.
\ee
with the same uniformity in $x$ as $n \ra\infty$.
\end{claim}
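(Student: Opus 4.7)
My plan is to prove Claim \ref{claim1} by direct substitution into the explicit expressions for $n\phi$ obtained in Section \ref{paramatrix}, establishing (\ref{philimit}) first and then using it to derive (\ref{zetalimit}) via the defining relations (\ref{varchange1})--(\ref{varchange2}) for $\zeta_n^{\lra}$.

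For $\alpha > 0$, substituting $x_n = 1 + x/(c_V n^{2/3})$ into (\ref{posphi0}) gives $n\phi(x_n) = \frac{4 x^{3/2}}{3 c_V^{3/2}} \, \wt\beta(x_n)$. A short Taylor expansion of the integral defining $n\phi$ (just before (\ref{posphi0})) shows $\wt\beta(1) = \beta_V(1)/2$, which equals $c_V^{3/2}$ by the identification $c_V = (\beta_V(1)/2)^{2/3}$ in Lemma \ref{firstdensitylemma}. Analyticity of $\wt\beta$ at $z = 1$ then yields $\wt\beta(x_n) = c_V^{3/2}(1 + O(n^{-2/3}))$ uniformly on compact sets, giving $n\phi(x_n) = \frac{4}{3}x^{3/2}(1+O(n^{-2/3}))$. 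Substituting into (\ref{posphi}) gives $\zeta_n^{\sra}(x_n) = (x/c_V)\wt\beta(x_n)^{2/3} = x(1+O(n^{-2/3}))$.

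For $\alpha<0$ and the $\phi$ part, I would use $x_n - c_{n,\alpha} = (x-\alpha)/(c_V n^{2/3})$ in (\ref{negphi}) to obtain
\[
n\phi(x_n) = \left\{ \frac{4(x-\alpha)^{3/2}}{3c_V^{3/2}} + \frac{2\alpha(x-\alpha)^{1/2}}{c_V^{3/2}} + O(n^{-2/3}) \right\} \wh\beta(x_n).
\]
Matching the $(z-c_{n,\alpha})^{3/2}$-coefficient in (\ref{negphi}) against the Taylor expansion of $n\phi$ derived from the density (\ref{densatend2}) gives $\wh\beta(c_{n,\alpha}) = \beta_{V,n,\alpha}^{(a)}(c_{n,\alpha})/2$, which tends to $\beta_V(1)/2 = c_V^{3/2}$ by Lemma \ref{densitylemma2}. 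Analyticity then yields $\wh\beta(x_n) = c_V^{3/2}(1+O(n^{-2/3}))$ and the claimed $n\phi$-asymptotic.

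The $\zeta$-estimate in the $\alpha<0$ case is the main technical obstacle, since $\zeta_n^{\sla}$ is only implicitly defined. I would exploit that $\zeta_n^{\sla}$ is an honest analytic function of $z$ on $U_\ep$, as established in the proof of Lemma \ref{paralemma} (the $u$-derivative of $\frac{4}{3}u^3 + 2\alpha u$ at $u = 0$ is $2\alpha \neq 0$, so the implicit function theorem applies). Expanding $\zeta_n^{\sla}(z) = \sum_{k\geq 1} c_k (z-c_{n,\alpha})^k$ and matching the $(z-c_{n,\alpha})^{1/2}$-coefficient in (\ref{varchange2}) against (\ref{negphi}) gives $c_1 = c_V n^{2/3}(1+O(n^{-2/3}))$. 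For the higher $c_k$, Cauchy's estimate applied on $|z-c_{n,\alpha}|=\ep/2$ together with the bound $|\zeta_n^{\sla}(z)| = O(n^{2/3})$ there (which follows from (\ref{varchange2}) since $|n\phi(z)|=O(n)$ on this circle) gives $|c_k| \leq C n^{2/3} (2/\ep)^k$. At $z = x_n$ with $|x_n - c_{n,\alpha}| = O(n^{-2/3})$, the linear term is $c_1(x_n-c_{n,\alpha}) = (x-\alpha)(1+O(n^{-2/3}))$, while $\sum_{k\geq 2} c_k(x_n-c_{n,\alpha})^k$ is bounded by a geometric series of ratio $O(n^{-2/3})$ that sums to $O(n^{-2/3})$. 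This yields $\zeta_n^{\sla}(x_n) = (x-\alpha)(1+O(n^{-2/3}))$ uniformly for $x$ in compact sets of $(\alpha,\infty)$.
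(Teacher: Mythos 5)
Your proposal is correct and follows essentially the same approach as the paper: substitute $x_n$ into the explicit expressions (\ref{posphi0}) and (\ref{negphi}) and invoke analyticity of $\wt\beta$, $\wh\beta$ together with the identifications $\wt\beta(1)=\wh\beta(1)=c_V^{3/2}$. The paper dispatches (\ref{zetalimit}) with a one-line remark that the same considerations apply; your Taylor-expansion and Cauchy-estimate argument for $\zeta_n^{\sla}$ in the implicitly defined $\alpha<0$ case supplies precisely the uniformity details that remark elides, but the underlying idea is identical.
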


\begin{claim}
\label{claim2}
 For all real $x > \alpha $,
 \be
\label{Mlimit}
   M_{n}^{\sra}(x) -  M^{\sra}(x)  = \OO (n^{-2/3} ),
\ee
and
\be
\label{Mderivlimit}
  \frac{d}{dx} ( M_{n}^{\sra}(x) -  M^{\sra}(x)  ) = \OO (n^{-2/3} ),
\ee
as $n \ra \infty$. The estimates are uniform for $x$ a positive
distance from $\alpha$.
\end{claim}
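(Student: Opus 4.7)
The plan is to study the ratio
\be
\Delta(z) := M_n^{\sra}(z)\bigl(M^{\sra}(z)\bigr)^{-1}.
\ee
Since $M_n^{\sra}$ and $M^{\sra}$ share jump conditions on $\arg z=\pm\frac{2}{3}\pi$, on $(-\infty,0)$, and the normalization (\ref{Masymp1}) at infinity, $\Delta$ is analytic off the short real segment $I_n:=[\min(\alpha,\alpha_n),\max(\alpha,\alpha_n)]$, which is precisely the symmetric difference of the two real-axis jump intervals. By (\ref{movingboundary}) together with the analyticity of $\wt\beta$ at $1$, $|I_n|=|\alpha_n-\alpha|=\OO(n^{-2/3})$. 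On $I_n$ the jump of $\Delta$ reads $W=M^{\sra}J^{\pm 1}(M^{\sra})^{-1}$, where $J$ is the upper-triangular $(0,\alpha)$-jump of $RHP^{\sra}$. A direct matrix computation using $\det M^{\sra}=1$ and the fact that $J-I$ has only a $(1,2)$-entry shows that $W-I$ depends solely on the first column of $M^{\sra}$, which is analytic across $\alpha$ and therefore bounded on $I_n$. Hence $W-I$ is $\OO(1)$ uniformly in $n$.

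Because $\Delta(\infty)=I$, the Plemelj--Sokhotsky formula yields
\be
\label{deltaIntegral}
\Delta(z)-I \;=\; \frac{1}{2\pi i}\int_{I_n}\frac{\Delta_{-}(s)(W(s)-I)}{s-z}\,ds.
\ee
For any $x>\alpha$ with $x-\alpha\ge\delta>0$, and $n$ large enough that $|\alpha_n-\alpha|<\delta/2$, the kernel $(s-x)^{-1}$ is bounded by $2/\delta$ uniformly for $s\in I_n$. Thus $|\Delta(x)-I|$, and by differentiation of (\ref{deltaIntegral}) under the integral also $|\Delta^\prime(x)|$, are controlled by $\|\Delta_{-}(W-I)\|_{L^1(I_n)}$. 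Provided this $L^1$-norm is $\OO(n^{-2/3})$, the claim follows by writing
\be
M_n^{\sra}(x)-M^{\sra}(x) = \bigl(\Delta(x)-I\bigr)\,M^{\sra}(x),
\ee
differentiating, and invoking the boundedness of $M^{\sra}$ and $(M^{\sra})^\prime$ on compact subsets of $(\alpha,\infty)$, which is supplied by the local representation of $M^{\sra}$ near $\alpha$ cited in Section 6.

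The main obstacle is therefore the uniform $L^1$-bound on $\Delta_{-}(W-I)$. Since $W-I$ is already $\OO(1)$, one must bound $\Delta_{-}$ in $L^1(I_n)$ uniformly in $n$. Both $M^{\sra}$ and $M_n^{\sra}$ carry at worst logarithmic endpoint singularities in their second columns (at $\alpha$ and $\alpha_n$ respectively), inherited from the log-type endpoint condition on $Y$ at $c_{n,\alpha}$, so $\Delta_{-}(s)$ is at worst $\OO(\log|s-\alpha|+\log|s-\alpha_n|)$ on $I_n$ and is locally integrable. To obtain uniformity in $n$, one appeals to the continuous dependence of the solution of $RHP^{\sra}$ on the parameter $\alpha$ proved in Section 7: the leading log-coefficients depend continuously on the endpoint location, so the local log-structure of $M_n^{\sra}$ at $\alpha_n$ is approximated by that of $M^{\sra}$ at $\alpha$ up to vanishing error. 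This approximate cancellation in $\Delta_{-}=M_n^{\sra}(M^{\sra})^{-1}$ leaves a bounded remainder on $I_n$, whose integral then scales with $|I_n|=\OO(n^{-2/3})$, yielding the required estimate.
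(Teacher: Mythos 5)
Your approach is genuinely different from the paper's. The paper simply refers to Lemma~\ref{contlemma}: it conjugates $M^{\sra}(\cdot\,;\alpha)$ by the local parametrix $P_{\beta}(\zeta)$ inside a fixed disk so that the $\alpha$-dependence moves from the contour into the jump on $\partial D_b$, checks the $L^{\infty}\cap L^2$-estimate (\ref{contestimate}), and then invokes the general $L^2$-stability theory for RHPs (Proposition~\ref{contprop}) to conclude Lipschitz dependence $M^{\sra}(z;\alpha)-M^{\sra}(z;\beta)=\OO(\alpha-\beta)$. Your ratio argument $\Delta=M_n^{\sra}(M^{\sra})^{-1}$ is more elementary and concrete: it avoids the abstract stability machinery and reduces everything to one Cauchy integral over the short interval $I_n$.

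There is, however, a genuine gap at the final step. You want $\|\Delta_{-}(W-I)\|_{L^1(I_n)}=\OO(n^{-2/3})$ and try to get it by bounding $\Delta_{-}$ itself in $L^{\infty}(I_n)$ through an ``approximate cancellation'' of logarithmic coefficients. This cannot work: the log singularity of $M_n^{\sra,-}$ sits at $\alpha_n$ while the log singularity of $(M^{\sra})^{-1}$ sits at $\alpha$, and these are distinct points of $I_n$. A difference of the form $a\log|s-\alpha_n|-b\log|s-\alpha|$ with $a\approx b$ and $\alpha_n\approx\alpha$ is still unbounded near each endpoint, so $\Delta_{-}$ is genuinely unbounded on $I_n$; the best you get this way is $\|\Delta_{-}\|_{L^1(I_n)}=\OO(n^{-2/3}\log n)$, which is too weak for the claim.

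The fix is a short algebraic observation rather than an approximation. Since $W-I=M^{\sra}(J-I)(M^{\sra})^{-1}$ (with $M^{\sra}$ continuous across $I_n$), you have
\[
\Delta_{-}(W-I)=M_n^{\sra,-}(M^{\sra})^{-1}\,M^{\sra}(J-I)(M^{\sra})^{-1}=M_n^{\sra,-}\,(J-I)\,(M^{\sra})^{-1}.
\]
Now $J-I=e^{-\frac{4}{3}s^{3/2}}\,e_1 e_2^{T}$ is rank one, so
\[
M_n^{\sra,-}(J-I)(M^{\sra})^{-1}=e^{-\frac{4}{3}s^{3/2}}\bigl(M_n^{\sra,-}e_1\bigr)\bigl(e_2^{T}(M^{\sra})^{-1}\bigr),
\]
which involves only the \emph{first column} of $M_n^{\sra,-}$ (analytic and bounded across $\alpha_n$, since the jump $J$ is upper triangular) and the \emph{second row} of $(M^{\sra})^{-1}$, which by $\det M^{\sra}\equiv 1$ is $(-M_{21},\,M_{11})$, again the analytic first column of $M^{\sra}$. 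The log-singular second column of $M_n^{\sra}$ and the log-singular first row of $(M^{\sra})^{-1}$ are annihilated exactly by the structure of $J-I$. Hence $\Delta_{-}(W-I)$ is $\OO(1)$ uniformly on $I_n$, giving $\|\Delta_{-}(W-I)\|_{L^1(I_n)}=\OO(|I_n|)=\OO(n^{-2/3})$, and the rest of your argument (Plemelj representation, differentiation under the integral, boundedness of $M^{\sra}$ and $(M^{\sra})'$ on compacts of $(\alpha,\infty)$) goes through. With this replacement your proof is both correct and arguably tighter than the paper's.
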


\begin{proof}[Proofs]   The estimate (\ref{philimit}) follows directly from substituting the definition of $x_n$ into
(\ref{posphi0}) and (\ref{negphi}):
\[
    n \phi(x_n)  =  \frac{4}{3} x^{3/2} \times   \wt  \beta(x_n ) / (c_V)^{3/2},
\]
for $\alpha > 0$, and,
\[
    n  \phi(x_n) =  \Bigl\{ \frac{4}{3} (x - \alpha)^{3/2}  + (2 \alpha + \OO(n^{-4/3}) ) ( x - \alpha)^{1/2}   \Bigr\} \times  \wh \beta(x_n) / (c_V)^{3/2},
\]
for $\alpha < 0$.  Since $\wt \beta(z)$ and $\wh \beta(z)$  are analytic and $= c_V^{3/2} > 0$ at $z =1$, each of the rightmost
factor above may be expanded as in $1 + \OO(n^{-2/3}) + \cdots$.   The same considerations lead to  (\ref{zetalimit}).

As for (\ref{Mlimit}) and (\ref{Mderivlimit}), Lemma \ref{contlemma} below proves that $M^{\sra}(z) = M^{\sra}(z; \alpha)$
is continuous in $\alpha$ for $z$ supported away from $\Sigma^{\sra}$.  The estimate (\ref{contestimate}) obtained in its
proof provides
\[
    M^{\sra}( z ; \alpha) - M^{\sra}(z ; \beta) = \OO( \alpha - \beta),
\]
with the same holding for the derivative of the left hand side. Now, since $ M_n^{\sra} (z) \equiv M^{\sra}( z ; \alpha_n)$
and (\ref{movingboundary}) shows that $\alpha_n = \alpha + \OO(n^{-2/3})$, the claim is proven.
\end{proof}

Picking up the calculation, the matrix within the last determinant of
(\ref{Lker2}) is now written as
\beq
\label{det22}
\lefteqn{   \hspace{-2cm} K(x_n)  \left[ \left(  \ba{cc}  ({\bf M}_{n,\alpha})_{11} (x_n) &  ({\bf M}_{n,\alpha})_{11} ({y}_n) \\
                              ({\bf M}_{n,\alpha})_{21} (x_n) &  ({\bf M}_{n,\alpha})_{21} ({y}_n)          \ea \right) \right.} \\
          & & \ \ \ \
              \left. + K(y_n)^{-1}  ( K (x_n) - K (y_n) )
                           \left(   \ba{cc}    ({\bf M}_{n,\alpha})_{11} (x_n)  & 0  \\
                                         ({\bf M}_{n,\alpha})_{21} (x_n)  & 0  \ea \right) \right].  \nonumber
\eeq
Since the analytic matrix function  $K(z)$ satisfies  $\det K(z) \equiv 1$, we see that the form of desired limit
resides in the first term of (\ref{det22}).  As for the second term, first note that
since $K(z)$  and its derivative are  uniformly bounded for  $|z- 1| <  {\ep}$ with a small
enough $\ep> 0$, it follows that
 $K(z)^{-1}$  is bounded in kind and that
 $K(x_n) - K(y_n) = \OO ( |x_n-y_n| ) = \OO( |x-y| n^{-2/3})$, by the mean-value theorem.
Therefore,
\[
 K(y_n)^{-1}  ( K (x_n) - K (y_n) )  \left(   \ba{cc}    ({\bf M}_{n,\alpha})_{11} (x_n)  & 0  \\
                                         ({\bf M}_{n,\alpha})_{21} (x_n)  & 0  \ea \right) =
                           \left(   \ba{cc}
                                            \OO ( |x-y| n^{-2/3}   )  & 0  \\
                                              \OO ( |x-y| n^{-2/3}   )  &  0
                                              \ea   \right).
\]
Along with the estimates on the pre-factor,
we are in the domain of analyticity of $M_n^{\sra}$ and $M^{\sla}$, which coupled with
(\ref{zetalimit}) through (\ref{Mlimit}), implies that ${\bf M}_{n, \alpha}(x_n)$
is uniformly bounded.

The kernel now reads,
\beq
\label{det33}
  {\wh L}_{n,\alpha}(x_n ,y_n) & = & \frac{1}{(x-y)} \det   \left(
                             \ba{cc}  ({\bf M}_{n,\alpha})_{11} ({x}_n)    + \OO ( |x-y| n^{-2/3}   )  & ({\bf M}_{n,\alpha})_{11} ({y}_n)  \\
                                            ({\bf M}_{n,\alpha})_{21} ({x}_n)    + \OO ( |x-y| n^{-2/3}   )  & ({\bf M}_{n,\alpha})_{21} ({y}_n)
                              \ea \right)   \\
                  & = &   \frac{1}{(x-y)} \det   \left(
                             \ba{cc}  ({\bf M}_{n,\alpha})_{11} ({x}_n)     & ({\bf M}_{n,\alpha})_{11} ({y}_n)  \\
                                            ({\bf M}_{n,\alpha})_{21} ({x}_n)    & ({\bf M}_{n,\alpha})_{21} ({y}_n)
                                    \ea \right)    + \OO (  n^{-2/3}  ),   \nonumber
\eeq
for $x, y$ bounded in $(\alpha, \infty)$ and all large $n$.  Instead of expanding the first term of the right hand side
entry-wise,  the second column of that matrix is subtracted by the first,
\be
\label{det44}
 {\wh L}_{n,\alpha}(x_n ,y_n)  =  \frac{1}{(x-y)} \det   \left(
                             \ba{cc}  ({\bf M}_{n,\alpha})_{11} ({x}_n)   -  ({\bf M}_{n,\alpha})_{11} ({y}_n)  & ({\bf M}_{n,\alpha})_{11} ({y}_n)  \\
                                            ({\bf M}_{n,\alpha})_{21} ({x}_n)   -  ({\bf M}_{n,\alpha})_{21} ({y}_n)  & ({\bf M}_{n,\alpha})_{21} ({y}_n)
                                    \ea \right)    + \OO (  n^{-2/3}  ),
\ee
which will allow an estimate uniform in $x$ and $y$ even as $|x-y|  \downarrow 0$.

Using (\ref{philimit})  and (\ref{zetalimit}) and the analyticity of $M^{\sla}(z)$ ($\RE  z > 0)$, one can check
that: with $\xi =  \frac{e^{ - \pi i/4 }}{\sqrt{2 \pi}}$,
\[
   \frac{d}{dx}  \Bigl( (M_{n, \alpha})_{\cdot 1}(x_n) -
          (M^{\sla})_{\cdot 1}(x-\alpha) \, \xi  e^{ \frac{1}{2} \theta(x-\alpha)}   \Bigr) = \OO  \hspace{-.08cm} \left( \frac{x}{n^{2/3}} \right)
\]
with $\theta(x- \alpha) = (4/3) (x-\alpha)^{3/2} +  2 \alpha (x-\alpha)^{1/2}$ and $\cdot = 1, 2$.
For $\alpha > 0$, the same basic reasoning gives:
\[
    \frac{d}{dx}  \Bigl( (M_{n, \alpha})_{\cdot 1}(x_n) -
          (M_n^{\sra})_{\cdot 1} (x) \, \xi e^{ \frac{1}{2} \theta(x)}   \Bigr) = \OO   \hspace{-.08cm} \left( \frac{x}{n^{2/3}} \right)
\]
with $\theta(x) = (4/3) x^{3/2}$ and again $\cdot = 1$ or $2$.  It follows that
\beq
\label{difbound}
(M_{n, \alpha})_{\cdot 1}(x_n) - (M_{n, \alpha})_{\cdot 1}(y_n) &  =  &  \xi
 \,    \Bigl(  (M^{\sla})_{\cdot 1}(x-\alpha)  e^{ \frac{1}{2} \theta(x-\alpha)}  -  (M^{\sla})_{\cdot 1}(y-\alpha)  e^{ \frac{1}{2} \theta(y-\alpha)}  \Bigr)  \nonumber \\
& & + \OO ( |x - y| n^{-2/3}).
\eeq
for $\alpha < 0$, with the analogous statement for $\alpha > 0$.  Detailing how  (\ref{difbound})
is employed in (\ref{det44}) we recall the definition of $(f_{\alpha}^{\sla} , {g}_{\alpha}^{\sla})$
from (\ref{BBdef}) and write,
\beq
\label{det55}
\lefteqn{  {\wh L}_{n,\alpha < 0}(x_n ,y_n) } \nonumber  \\
& =  &  \frac{1}{x-y}
\det  \left(  \ba{ll}    ( {f}_{\alpha}^{\sla}(x-\alpha)    -  {f}_{\alpha}^{\sla}(y-\alpha)   + \OO(|x-y| n^{-2/3}) &
                             {f}_{\alpha}^{\sla}( y-\alpha)  + \OO(n^{-2/3}) \\
                            {g}_{\alpha}^{\sla}(x-\alpha)  -  {g}_{\alpha}^{\sla}(y-\alpha)  + \OO  (|x-y| n^{-2/3} ) &
                               {g}_{\alpha}^{\sla}(y-\alpha)  + \OO(n^{-2/3})
                          \ea    \right)  \nonumber  \\
& = &  {\mathbb B}_{\alpha}(x,y)
 + \frac{1}{(x-y)} \det \left(  \ba{ll}    {f}_{\alpha}^{\sla}(x-\alpha)    -  {f}_{\alpha}^{\sla}(y-\alpha)   &
                                \OO(n^{-2/3}) \\
                            {g}_{\alpha}^{\sla}(x-\alpha)  -  {g}_{\alpha}^{\sla}(y-\alpha)  &
                               \OO(n^{-2/3}) \ea \right)  + \OO ( n^{-2/3} ).
\eeq
The first term of the right hand side is the advertised limit kernel.  To show that the second term is
$\OO(n^{-2/3})$ uniformly in $x,y$ in bounded sets of $(\alpha, \infty)$ note that
\[
    \frac{{f}_{\alpha}^{\sla}(x-\alpha)  -  {f}_{\alpha}^{\sla}(y-\alpha)}{x-y} = \OO(1),
\]
for all such $x$ and $y$
since $x  \mapsto (M^{\sla})_{11}(x-\alpha) e^{ - \frac{1}{2} \phi(x-\alpha) } $ is smooth for $x > \alpha$.
The same is true if ${f}_{\alpha}^{\sla}$ is replaced by ${g}_{\alpha}^{\sla}$.
This completes the proof for $\alpha < 0$.

In the case
$\alpha > 0$ following the steps behind (\ref{det55}) produces
\[
  {\wh L}_{n,\alpha > 0}(x_n ,y_n)  =  \frac{\xi^2}{(x-y)}  e^{ - (\frac{2}{3} x^{3/2} + \frac{2}{3} y^{3/2} )}
     \det  \left(  \ba{ll}  (M_{n}^{\sra})_{11}(x) &  (M_{n}^{\sra})_{11}(y)  \\
                                    (M_{n}^{\sra})_{21}(x) &  (M_{n}^{\sra})_{21}(y)  \ea \right) + \OO(n^{2/3}).
\]
Next repeat the procedure:   subtracting the second column from the first in the above
determinant and now
employing the estimates of
Claim \ref{claim2} will  allow each appearance  of $(M_n^{\sra})_{\cdot 1}$ to be replaced
with the corresponding  $(M^{\sra})_{\cdot 1}$ with an overall $\OO(n^{-2/3})$ error.

\section{Existence for the local problems  $RHP^{\lra}$}
\setcounter{num}{6}
\setcounter{equation}{0}
\label{existence}

The general theory connects the the construction of a solution
to a given RHP  to that of a certain
singular integral operator.  In particular, consider the RHP $(\Sigma, v)$:
\beq
\label{testRHP}
\ba{ll}
  m(\zeta) \mbox{ analytic in } \C \backslash \Sigma,  &  \\
  m_{+}(\zeta) = m_{-}(\zeta) v(\zeta),  & \zeta \in \Sigma,  \\
  m(\zeta) = I  + \OO \Bigl( \frac{1}{\zeta} \Bigr),   & \zeta \ra \infty, \zeta \notin \Sigma,
\ea
\eeq
in which $v$ is continuous on $\Sigma$ away from points of self-intersection and
$v(\zeta) \ra I$  as $\zeta \ra \infty$ along $\Sigma$.  Next define the integral
operator
\[
  C_{v} f (\zeta) =  C_{-}  \Bigl(  f \, ( v - I  )  \Bigr)
\]
on $L^2(\Sigma, |d \zeta||)$, where $C_{-}$, and $C_{+}$ are the $\pm$-limits of the Cauchy operator:
\[
     ( C_{\pm}f )  (\zeta) =   \lim_{\zeta^{\pr} \ra \zeta \atop \zeta^{\pr} \in \pm \; \mbox{side of }
     \Sigma}  \frac{1}{2 \pi i}  \int_{\Sigma} \frac{f(s)}{ s - \zeta^{\pr}} ds,
       \  \  \ \zeta \in \Sigma.
\]
With  $v \in I + L^2(\Sigma)$,  $C_v$ is bounded from $L^2(\Sigma) \ra L^2(\Sigma)$, and
an $L^2$ solution to (\ref{testRHP})  can be construction out of a solution $\mu_v \in I + L^2(\Sigma)$
of
\be
\label{mudef}
    ( \ID - C_v) \mu_v = I
\ee
via
\be
\label{solrep}
    m(\zeta) = I +   \frac{1}{2\pi i}  \int_{\Sigma}  \frac{ \mu_v(s)  (v(s) - I) }{ s - \zeta}  ds .
\ee
For
details behind these facts,
\cite{Zhou89}  is recommended.  Existence for the $RHP$   $(\Sigma, v)$ would then follow from
showing that $\ID - C_v$ is a bijection in $L^2(\Sigma)$.

To apply this strategy to either
$RHP^{\sra}$ or  $RHP^{\sla} $  requires a preliminary step: it is not the case that
$M^{\lra}$, or their corresponding jump matrices  $V^{\lra}$,  are
normalized to the identity at infinity.
Therefore, we bring in
\be
\label{twistsol}
{\mathfrak m} (\zeta) = {\zeta}^{\frac{-\sigma_3}{4}}  \frac{1}{\sqrt{2}} \,
             \left( \ba{cc} 1 & 1 \\ -1 & 1 \ea \right)  e^{ - \frac{i \pi}{4}  \sigma_3},
\ee
the fundamental solution of the twist problem:
\[
    {\mathfrak m}_+(\zeta) =
    {\mathfrak m}_{-}(\zeta)     \left( \ba{cc} 0 & 1 \\ -1 & 0 \ea \right),  \   \    \  \mbox{ for } \zeta \in \R_{-},
\]
and ${\mathfrak m}(\zeta)$ analytic in $\C \backslash \R_{-}$.
With this, we define
\be
\label{tame}
     \widetilde{M}^{\lra}(\zeta) =    \left\{
        \ba{ll}
                M^{\lra}(\zeta)  \,   {\mathfrak m }^{-1}(\zeta), &     \mbox{ for }  |\zeta| > R,  \\
                    M^{\lra}(\zeta),   &   \mbox{ for }  |\zeta| < R,  \ea \right.
\ee
with a fixed large and positive $R$.   The contours for the pair of RHPs for ${\wt M}^{\lra}$
appear in Figure \ref{infty_out}.  Obviously, in each case the jump along the negative real
axis has been removed far out.  Further, along ${\wt \Sigma}_2$ and ${\wt \Sigma}_4$
and $|\zeta| > R$, the new jumps
\be
\label{tamejumps}
   {\wt V}(\zeta) =  \mathfrak m (\zeta)   \left( \ba{cc} 1 & 0 \\ e^{\tht(\zeta)} & 1 \ea \right) ({\mathfrak m}(z))^{-1},
\ee
with
$ \tht(\zeta) =  \frac{4}{3} \zeta^{2/3}$ or   $ \tht(\zeta) =   \frac{4}{3} \zeta^{2/3} - 2  |\al|  \zeta^{1/2}$,
still
decay exponentially fast to the identity as $\zeta \ra \infty$.  Last, along the introduced
contour $|\zeta| = R$, both
problems have the uniformly bounded jump ${\mathfrak m}^{-1}(\zeta)$.


\begin{figure}[h]
\centerline{   
             \scalebox{0.58}{
             \includegraphics{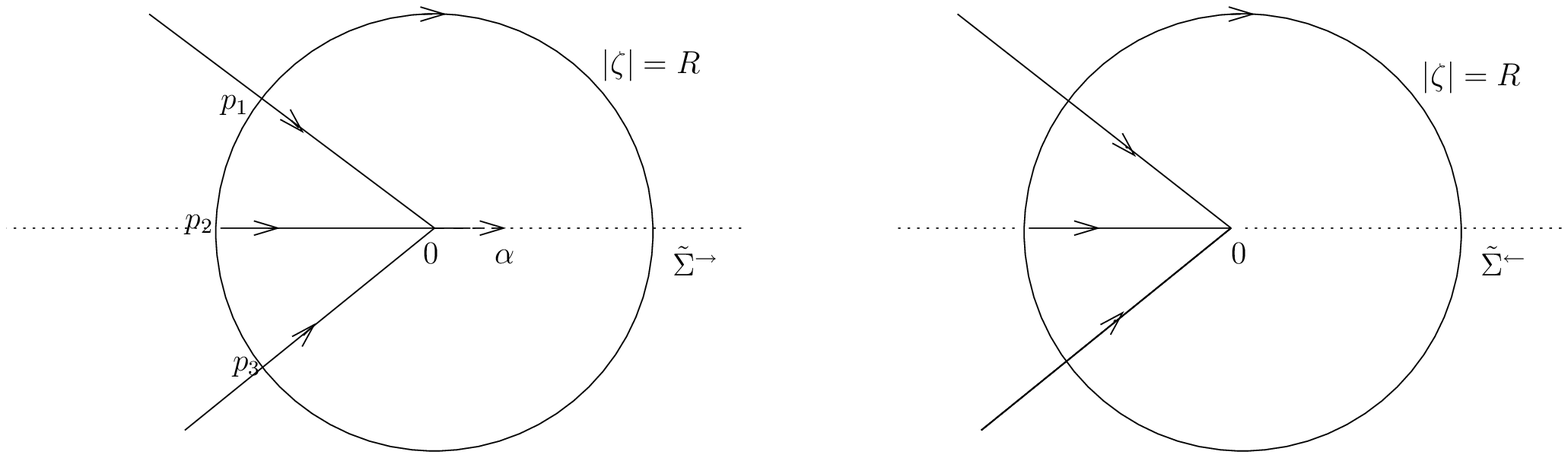}
                }
             }
\caption{The contours for $( \wt \Sigma^{\lra}, \wt V^{\lra} )$.}
\label{infty_out}
\end{figure}

We now have a pair of problems which fit into the above program (jumps are $\in L^{\infty} \cap( I + L^2)$).
The proof of existence now comes in three steps: to show $\ID - C_{{\wt V}^ \lra}$ is
Fredholm, has  zero index, and  then that $ {\rm ker}(\ID - C_{{\wt V}^ \lra}) = 0$.
This last point is established through a vanishing lemma similar in spirit to
\cite{DKMVZ99b}, Section $5$.

\subsection{Fredholmness}

Fredholmness is implied by the following  continuity condition holding
throughout the contour.
Moving clockwise about a point $p$ on $\Sigma$, at which segments of the
contour $\Sigma_1$ through $\Sigma_k$ with jumps $v_1$ through
$v_k$ meet, continuity at $p$ is equivalent to
\be
\label{continuity}
    I  = v_1(p)^{\pm 1}  v_2(p)^{\pm 1} \cdots v_k(p)^{\pm 1},
\ee
in which the sign ($\pm 1$) in the exponent is determined by whether the given contour points into,
or out of, $p$.   Additionally, this assessment is invariant of conjugations or deformations, see \cite{Zhou89}.

\subsubsection{Criteria (\ref{continuity}) for $RHP^{\sra}$ }

The conjugation by ${\mathfrak m} (\zeta)$ in the exterior of a large disk not only produced a problem with decay, from
the point of view of Fredholmness, it removed the discontinuity point at $\zeta = \infty$.    While  new point
of intersection $\zeta = p_1, p_2 $ and $p_3$ have been introduced to resulting contour by this move,
each corresponded to a point of continuity for the $( \Sigma^{\sla}, V^{\sla})$ and thus remain so by
the discussion above.

The worrisome points left on $ \wt \Sigma^{\sra}$  are then the origin  and $\zeta = \alpha$.   At the origin,
the jump matrices $\wt \Sigma_1^{\sra} $ through $\wt \Sigma_4^{\sra}$ satisfy,
\[
    I =  \left( \ba{cc} 1 & 1 \\ 0 & 1 \ea \right)^{-1}
           \left( \ba{cc} 1 & 0 \\ 1 & 1 \ea \right)
            \left( \ba{cc} 0 & 1 \\ -1 & 0 \ea \right)
            \left( \ba{cc} 1 & 0 \\ 1 & 1 \ea \right),
\]
and therefore (\ref{continuity}) holds.
At the point  $\zeta=\al$, fix a small $\ep > 0$ $(\ep  < \al)$  and consider the local problem
\[
      (P_{\al}(\zeta) )_+  =  \left\{  \ba{ll}  (P_{\al} (\zeta))_{-}    \left( \ba{cc} 1 & e^{- \frac{4}{3} \zeta^{3/2} } \\ 0 & 1 \ea \right) &
               \mbox{  for }  \zeta \in [\al -\ep, \al]  \\
                       (P_{\al} (\zeta))_{-}         & \mbox{  for } \zeta \in (\al, \al + \ep]  \ea   . \right.
\]
The jump being upper-triangular allows us to write down an explicit solution:
\be
   P_{\al}(\zeta) =  \  \left( \ba{cc} 1 &   \frac{1}{2 \pi i} \int_{\al - \ep}^{\al} \frac{e^{- \frac{4}{3} s^{3/2} }}{s - \zeta}  \, {ds} \\ 0 & 1 \ea \right),
\label{6.8}
\ee
holding in $L^2$ and in the sense of continuous boundary values away from $\zeta = \al - \ep$ and $\zeta = \al$.
(Recall, $C_{+} - C_{-} = \ID$.)
Next choose
a positive  $\ep^{\prime} <  \ep$ and define
\[
     M_{\al}(\zeta ) =  {\widetilde M}^{\sra}(\zeta) \, P_{\al}(\zeta)   \   \  \ \mbox{ for }   | \zeta - \al | < \ep^{\prime},
\]
leaving $  {\widetilde M}^{\sra}(\zeta)$  unchanged in the exterior of this disk.   The effect of conjugating out the
local solution is a new $RHP$  with contour depicted below in Figure \ref{new_inner_contour}.
The point of discontinuity $\zeta = \alpha$ has
been removed, with the introduced point of self-intersection at ${p}=p_4$ again automatically  a continuity point,
having arose from such by way of  a conjugation.

\begin{figure}[h]
\centerline{   
             \scalebox{0.47}{
             \includegraphics{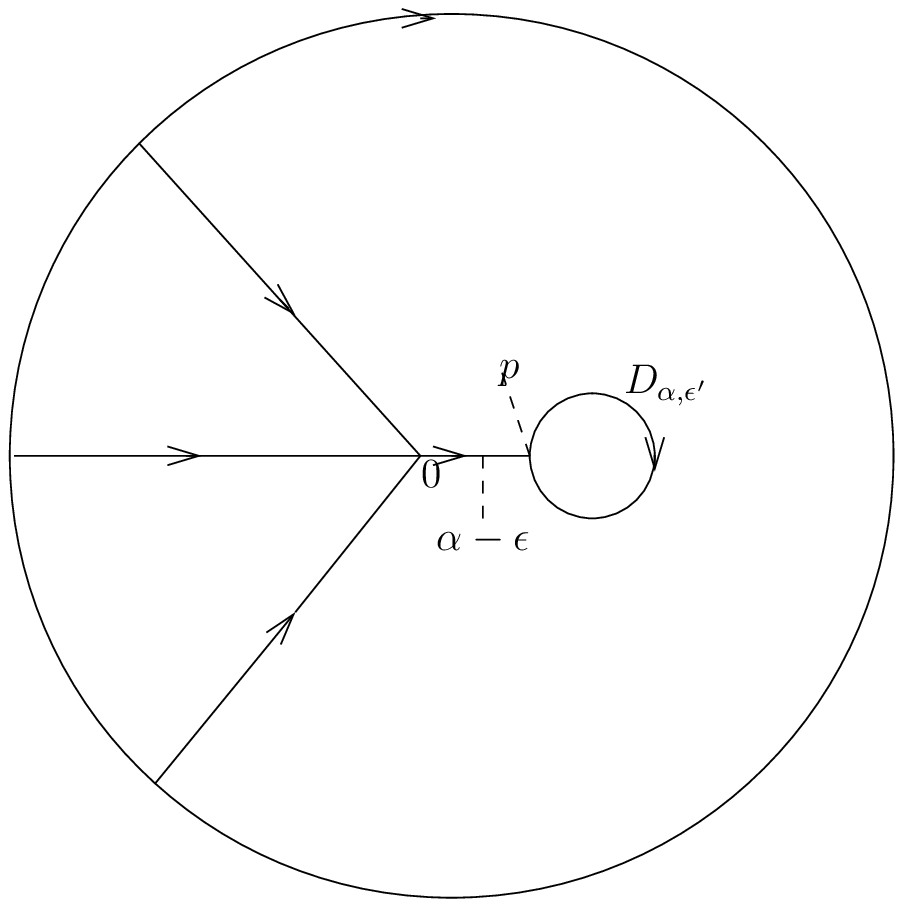}
                }
             }
\caption{The introduced jump along the boundary of the disk $D_{\al, \ep^{\pr}}$ is $P_{\al}^{-1}$.}
\label{new_inner_contour}
\end{figure}

\subsubsection{Criteria (\ref{continuity}) for $RHP^{\sla}$}

From the previous discussion for $RHP^{\sra}$  it is plain that we must only deal with the point
$\zeta = 0$ where the three ray of the contour  $\Sigma^{\sla}$ come together. This
is again handled by conjugating out a local solution local solution $P_0(\zeta)$
of the $RHP$ with jump  $V^{\sla}$ restricted to $\Sigma^{\sla} \cap ( U_{\ep} =  \{ \zeta : | \zeta  | < \ep \})$.
Given $P_{0}$ we set
     $ M_0(\zeta) =  {\wt M}^{\sla} (\zeta)  \,  P_{0}(\zeta)$,    for   $\zeta \in U_{{\ep}^{\pr}}$ with   $\ep^{\pr} < \ep$,
and $M_0 (\zeta) =  {\wt M}^{\sla} (\zeta) $ for $ \zeta \in \C \backslash U_{{\ep}^{\pr}}$.
The $RHP$ for $M_0$
will satisfy (\ref{continuity}),  and it will follow that $( \wt \Sigma^{\sla}, \wt V^{\sla})$ is Fredholm.

To construct $P_0(\zeta)$,  consider 
\[
     \wt M^{\sla}(\zeta) e^{ - (  \frac{2}{3} \zeta^{3/2} + \alpha \zeta^{1/2} ) \sigma_3} =  \wt
     M^{\sla}(\zeta) e^{ - \frac{1}{2} \theta(\zeta) \sigma_3},  \   \   \  \zeta \in U_{\ep},
\]
which has the constant jumps,
\be
\ba{ll}
    e^{   \frac{1}{2} \theta(\zeta) \sigma_3}  \left( \ba{cc} 1 & 0 \\ e^{\theta(\zeta)} & 1 \ea \right)  e^{ -  \frac{1}{2} \theta(\zeta) \sigma_3 }
    =    \left( \ba{cc} 1 & 0 \\ 1 & 1 \ea \right) , &  \zeta \in  (  \Sigma^{\sla} \cap \C_{\pm} )  \cap U_{\ep}, \\
e^{  \frac{1}{2} \theta_{-}(\zeta) \sigma_3}
 \left( \ba{cc} 0 & 1 \\ - 1 & 0 \ea \right)  e^{ -  \frac{1}{2} \theta_{+}(\zeta) \sigma_3}
    =   \left( \ba{cc} 0 & 1 \\ -1 & 0 \ea \right) ,    &     \zeta \in \R_{-} \cap U_{\ep}.
\ea
\ee
Extending the jump contours on the right to
infinity, we obtain the problem: find some
$Q(\zeta)$, which satisfies
\be
\label{firstbessel}
\ba{ll}
 Q_{+}(\zeta)   =   Q_{-} (\zeta)     \left( \ba{cc}  0 & 1 \\ -1 & 0 \ea \right),    & \zeta  \in  \R_{-}, \\
 Q_{+}(\zeta)  =    Q_{-}(\zeta)    \left( \ba{cc}  1 & 0\\ 1 & 1 \ea \right),   &
       \arg \zeta  =  \pm  \frac{2}{3} \pi,
\ea
\ee
with $Q(\zeta)$ otherwise analytic.   From \cite{KMV04}, Section 6,  we have:

\begin{proposition}
With  $H_0^{(1)}(\cdot)$ and $H_0^{(2)}(\cdot)$
denoting the Hankel functions of the first and second kind, and $I_0(\cdot)$ and $K_0(\cdot)$
the usual modified Bessel functions,
the (un-normalized) $RHP$  (\ref{firstbessel}) has the solution:
 \be
\label{Qdef1}
 Q (\zeta) =  \left(  \ba {cc}   I_0 ( \sqrt{\zeta}) &    \frac{i}{\pi}  K_0( \sqrt{\zeta}) \\
                        2 \pi i \sqrt{\zeta}   I_0^{\pr} (\sqrt{\zeta}) &      -2  \sqrt{\zeta} K_0^{\pr} (\sqrt{\zeta})
                                          \ea \right).
\ee
for  $  - \frac{2}{3} \pi < \arg \zeta  <    \frac{2}{3} \pi $,
\be
\label{Qdef2}
Q (\zeta) =  \left(  \ba {cc}  \frac{1}{2} H_0^{(1)}( \sqrt{-\zeta}) &    \frac{1}{2}  H_0^{(2)}( \sqrt{-\zeta}) \\
                        \pi \sqrt{\zeta} \Bigl( H_0^{(1)} \Bigr)^{\pr} (\sqrt{-\zeta}) &      \pi \sqrt{\zeta} \Bigl( H_0^{(2)} \Bigr)^{\pr} (\sqrt{-\zeta})
                                          \ea \right),
\ee
for $  \frac{2}{3}  \pi  <   \arg \zeta  <  \pi $, and
\be
\label{Qdef3}
 Q (\zeta) =  \left(  \ba {cc}  \frac{1}{2} H_0^{(1)}( \sqrt{-\zeta}) &   - \frac{1}{2}  H_0^{(2)}( \sqrt{-\zeta}) \\
                        - \pi \sqrt{\zeta} \Bigl( H_0^{(1)} \Bigr)^{\pr} (\sqrt{-\zeta}) &      \pi \sqrt{\zeta} \Bigl( H_0^{(2)} \Bigr)^{\pr} (\sqrt{-\zeta})
                                          \ea \right),
\ee
 for $     - \pi < \arg \zeta < - \frac{2}{3} \pi $.
 \end{proposition}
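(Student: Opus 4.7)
\medskip

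\noindent\textbf{Proof proposal.}  The statement is a direct verification, so the plan is to (i) check analyticity of the three piecewise formulas on the interiors of the three sectors cut out by $\R_{-}$ and the rays $\arg\zeta=\pm\frac{2}{3}\pi$, (ii) verify that the jump across $\R_{-}$ equals $\bigl(\begin{smallmatrix} 0 & 1 \\ -1 & 0 \end{smallmatrix}\bigr)$, and (iii) verify that the jumps across $\arg\zeta=\pm\frac{2}{3}\pi$ equal $\bigl(\begin{smallmatrix} 1 & 0 \\ 1 & 1 \end{smallmatrix}\bigr)$. Analyticity in each sector is automatic: $I_0, K_0$ are entire resp.\ analytic off $(-\infty,0]$, $H_0^{(1)}, H_0^{(2)}$ are analytic off $(-\infty,0]$, and on each sector we are composing them with a single-valued branch of $\sqrt{\zeta}$ or $\sqrt{-\zeta}$. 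The only bookkeeping issue is the choice of branch on each sector; I would fix $\sqrt{\zeta}$ to be the principal branch with cut along $\R_-$, and take $\sqrt{-\zeta}$ with cut along $\R_+$, so that $\sqrt{-\zeta}=-i\sqrt{\zeta}$ in the upper half-plane and $\sqrt{-\zeta}=i\sqrt{\zeta}$ in the lower half-plane, near the relevant rays.

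For the jump on $\R_{-}$, I would use the standard relation between the Hankel functions under the rotation $z\mapsto e^{i\pi}z$, namely
\[
H_0^{(1)}(e^{i\pi}w)=-H_0^{(2)}(w), \qquad H_0^{(2)}(e^{-i\pi}w)=-H_0^{(1)}(w),
\]
valid for $w$ in the principal sector. Applied at $\zeta=x+i0$ versus $\zeta=x-i0$ for $x<0$ with $w=\sqrt{-\zeta}$, these swap the columns of $Q$ and introduce the required signs, yielding $Q_-^{-1}Q_+=\bigl(\begin{smallmatrix} 0 & 1 \\ -1 & 0 \end{smallmatrix}\bigr)$. (I would also double-check via the Wronskian $H_0^{(1)}(w)(H_0^{(2)})'(w)-(H_0^{(1)})'(w)H_0^{(2)}(w)=-\frac{4i}{\pi w}$ that determinants match across the cut.)

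For the rays $\arg\zeta=\pm\frac{2}{3}\pi$, the crossing is between the $I_0/K_0$ formula (\ref{Qdef1}) and the Hankel formulas (\ref{Qdef2})/(\ref{Qdef3}). Here the key identities are
\[
I_0(w)=\tfrac{1}{2}\bigl[H_0^{(1)}(iw)+H_0^{(2)}(iw)\bigr]\bigm/(\text{appropriate phase}),\qquad K_0(w)=\tfrac{i\pi}{2}H_0^{(1)}(iw),
\]
more precisely the connection formulas
\[
I_0(w)=e^{-i\pi/2}J_0(iw), \qquad K_0(w)=\tfrac{\pi i}{2}H_0^{(1)}(iw)\quad(-\pi<\arg w\le \tfrac{\pi}{2}),
\]
and their analogue for $K_0$ in the lower half-plane involving $H_0^{(2)}$. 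Substituting $w=\sqrt{\zeta}$ on the $I_0/K_0$ side and $w=\sqrt{-\zeta}=\mp i\sqrt{\zeta}$ on the Hankel side across the ray, a short calculation (using $H_0^{(1)}+H_0^{(2)}=2J_0$ and the relation $K_0-\frac{i\pi}{2}H_0^{(1)}\circ(iw)=\cdots$) turns $Q_-^{-1}Q_+$ into the lower-triangular matrix with $1$'s on the diagonal and a $1$ in the $(2,1)$ entry, as required.

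The main obstacle I anticipate is \emph{sign and branch tracking}: each connection formula is valid only in a prescribed argument range, the branch of $\sqrt{-\zeta}$ flips sign as one crosses $\R_-$, and the factor $\sqrt{\zeta}$ multiplying the derivative row of (\ref{Qdef1})-(\ref{Qdef3}) must be handled consistently with the argument of $w$ being plugged into $H_0^{(j)\prime}$ versus $I_0', K_0'$. I would do this by fixing one reference point in each sector (say $\zeta=1$, $\zeta=e^{5\pi i/6}$, and $\zeta=e^{-5\pi i/6}$), expressing every branch in terms of $\arg\zeta$ there, and then matching limits from both sides of each ray symbolically. With those conventions nailed down, the three jump computations reduce to three lines each, and the proposition follows. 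For the uniqueness and the precise $L^2$/boundary-value sense I would appeal to the general theory already invoked in Section \ref{existence} (after normalization at infinity, which is used later when $Q$ is inserted as a parametrix $P_0$).
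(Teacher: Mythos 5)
The paper does not actually prove this Proposition: it is introduced with the sentence ``From \cite{KMV04}, Section 6, we have,'' i.e.\ it is a citation to the Bessel parametrix constructed by Kuijlaars--McLaughlin--Van Assche--Vanlessen. Your proposal, by contrast, is a direct verification from the Bessel/Hankel connection formulas. That is a legitimate and self-contained route, and it is in substance what is carried out in \cite{KMV04}; the paper simply offloads it. So the comparison is: the paper imports the result, you would re-derive it. Both are fine; your version makes the branch bookkeeping explicit, which is exactly where the content lives.

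A few specifics worth tightening if you carry this out. First, the connection formula you wrote, $I_0(w)=e^{-i\pi/2}J_0(iw)$, has a spurious phase: the general identity is $I_\nu(w)=e^{-\nu\pi i/2}J_\nu(ze^{i\pi/2})$, so at $\nu=0$ it is just $I_0(w)=J_0(iw)$; the companion you need is $K_0(w)=\tfrac{\pi i}{2}H_0^{(1)}(iw)$ for $-\pi<\arg w\le\pi/2$. With those, the $(1,1)$/$(1,2)$ matching across $\arg\zeta=\tfrac{2}{3}\pi$ works out cleanly: in the upper half-plane $\sqrt{-\zeta}=-i\sqrt\zeta$, and then $I_0(\sqrt\zeta)=J_0(i\sqrt\zeta)=J_0(\sqrt{-\zeta})=\tfrac12\bigl(H_0^{(1)}+H_0^{(2)}\bigr)(\sqrt{-\zeta})$ while $\tfrac{i}{\pi}K_0(\sqrt\zeta)=-\tfrac12 H_0^{(1)}(i\sqrt\zeta)=\tfrac12 H_0^{(2)}(\sqrt{-\zeta})$ (the last step using $H_0^{(2)}(-iw)=-H_0^{(1)}(iw)$). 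Second, to get the jump in the form $Q_+=Q_-\bigl(\begin{smallmatrix}1&0\\1&1\end{smallmatrix}\bigr)$ with the three formulas as written, the rays $\arg\zeta=\pm\tfrac{2}{3}\pi$ must be oriented \emph{toward} the origin (so that the $+$ side is sector I, i.e.\ the $I_0/K_0$ region); if you orient them outward the roles of $Q_\pm$ flip and you will appear to land on the inverse jump. Third, for $\sqrt{-\zeta}$ take the principal branch cut along $\R_+$ in the $\zeta$-variable, so that it is continuous across $\zeta\in\R_-$; the discontinuity across $\R_-$ then sits entirely in the factor $\sqrt\zeta$ in the second row, which is what drives the $\bigl(\begin{smallmatrix}0&1\\-1&0\end{smallmatrix}\bigr)$ jump there. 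The Wronskian checks ($K_0I_0'-K_0'I_0=1/z$ and $H_0^{(1)}(H_0^{(2)})'-(H_0^{(1)})'H_0^{(2)}=-4i/(\pi z)$) give $\det Q\equiv 2$ as a useful consistency check. With those conventions pinned down, your plan closes the argument.
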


It follows that we can set
$
   P_0(\zeta) = Q( \zeta)  \, e^{ \frac{1}{2} \theta(\zeta)  \sigma_3}
$   
to form the needed local solution  in $|\zeta| < \ep$ and complete the proof.

\subsection{Index zero}

One consequence of the Gohberg-Krein theory of factorization of matrix-valued functions,
is that, given Fredholmness, the index of $ \ID - C_V$ equals the winding number of
$\det V$ over the contour, see \cite{Spit}.  But with $V$ equal to either $ \wt V^{\sla}$ or $\wt V^{\sra}$,
$\det V \equiv 1$ and so that winding number is zero.

\subsection{Vanishing Lemma}

Finally we show that $\rm{ ker }( \ID - C_{V^{\lra}} )  = 0$, first reverting back
to the problem(s) tamed at infinity $(\wt \Sigma^{\lra},  \wt V^{\lra})$, recall
(\ref{tame}) and (\ref{tamejumps}).
Suppose that
$\rm{ ker }( \ID - C_{\wt V^{\lra}} ) \neq  0$.  Then there exists $\mu_0^{\lra} \in
L^2(\wt \Sigma) $ which satisfy
\[
    (\ID -   C_{\wt V^{\lra}} ) \mu_0^{\lra} = 0,
\]
and so
\[
    {\wt M}_0^{\lra} (\zeta)   {{=}}   \int_{\wt \Sigma^{\lra}} \frac{ \mu_0^{\lra}(s)  ( I - (\wt V^{\lra}(s))^{-1} )}{ s -\zeta }  \, ds
\]
are $L^2$-solutions of the $RHP$s:
\be
\ba{ll}
 {\wt M}_0^{\lra} (\zeta)  \mbox{ analyltic  in } \C \backslash \wt \Sigma^{\sla}  \mbox{ or } \C    \backslash \wt \Sigma^{\sra}, &  \\
 ({\wt M}_0^{\lra})_{+} (\zeta) =   ( {\wt M}_0^{\lra} )_{-}(\zeta) \, {\wt V}^{\lra}(\zeta),   &   \zeta \in   \wt \Sigma^{\sla}
                  \mbox{ or }  \zeta \in  \wt \Sigma^{\sla}  \\
  {\wt M}_0^{\lra} (\zeta) = \OO \Bigl(  \frac{1}{\zeta} \Bigr) ,  &  \zeta \ra \infty, \   \zeta \notin   \wt \Sigma^{\sla}
                  \mbox{ or }  \zeta \notin  \wt \Sigma^{\sla}  .
\ea
\ee
Given this assessment,  undoing the transformation that took us from the $RHP$s  $( \Sigma^{\lra}, V^{\lra})$,
$(\wt \Sigma^{\lra},  \wt V^{\lra})$,  (removing the conjugation by  $\mathfrak m(\zeta)$ produces
$M_0^{\sra}$ and $M_0^{\sla}$  which solve $RHP^{\sra}$  or
$RHP^{\sla}$ with new asymptotics:
\be
\label{newasymp}
   M_0^{\lra} (\zeta)  = \OO \Bigl( \frac{1}{\zeta} \Bigr)  \zeta^{-\frac{\sigma_3}{4}}  \frac{1}{\sqrt{2}}
   \left( \ba{cc} 1 & 1 \\ -1 &  1 \ea \right) e^{- \frac{i \pi}{4} \sigma_3}  ,   \   \   \  \zeta \ra \infty,
\ee
holding uniformly
in directions respecting
${\rm dist}( \zeta, \Sigma^{\sla}) > \delta$ or ${\rm dist}( \zeta, \Sigma^{\sra}) > \delta$.  We show that
the only conclusion is that $M^{\lra}(\zeta) \equiv 0$.

\subsubsection{Vanishing lemma for $RHP^{\sra}$}

The first step is to fold (and twist) the jumps down to the real line, defining a new matrix
$Z(\zeta)$ via
\beqn
\ba{ll}
   Z(\zeta)   = M_0^{\sra}(\zeta) \,    \left( \ba{cc} 0 & -1 \\ 1 & 0 \ea \right),  &  {\,} 0 < \arg \zeta < \frac{2}{3} \pi     \\
   Z(\zeta)  =   M_0^{\sra}(\zeta) \ \left( \ba{cc} 1 & 0 \\ e^{\frac{4}{3} \zeta^{3/2}} & 1 \ea \right)
                                 \left( \ba{cc} 0 & -1 \\ 1 & 0 \ea \right),   &  \frac{2}{3} \pi <  \arg \zeta < \pi  ,    \\
    Z(\zeta)   =  M_0^{\sra}(\zeta)  \left(  \ba{cc} 1 & 0 \\ - e^{\frac{4}{3} \zeta^{3/2}}  & 1 \ea \right),  &  - \pi < \arg \zeta < - \frac{2}{3} \pi , \\
    Z(\zeta)   =  M_0^{\sra}(\zeta) , &       - \frac{2}{3} \pi < \arg \zeta < 0.
\ea
\eeqn
Then $Z(\zeta)$ is an $L^2$-solution of the equivalent $RHP$:
\beq
\label{foldit1}
\ba{lcll}
   Z_{+} (\zeta) & =  &  Z_{-}(\zeta)  \left( \ba{cc} 1 & - e^{ \frac{4}{3} \zeta_+^{3/2}} \\
                                                                       e^{\frac{4}{3} \zeta_{-}^{3/2}}  & 0 \ea \right) ,  & \zeta \in (-\infty, 0] ,  \\
   Z_{+} (\zeta) & =   & Z_{-}(\zeta) \left(   \ba{cc} e^{- \frac{4}{3} \zeta^{3/2}} & -1 \\ 1 & 0 \ea \right),
                                    &  \zeta \in   (0, \alpha], \\
   Z_{+}(\zeta)  & = &  Z_{-}(\zeta)
                                             \left(\ba{cc} 0 & -1 \\ 1 & 0 \ea \right),
                                     & \zeta \in  (\alpha, \infty),
\ea
\eeq
with now $Z(\zeta) =  \OO(\zeta^{-3/2})$, compare (\ref{newasymp}).
Denoting the piece-wise defined jump matrix  in (\ref{foldit1}) as $V$,
we notice that,
\beq
\label{vanish}
   0  =  \int_{\R}  Z_{+} (s)  Z_{-}^*(s)  \, ds
        =  \int _{\R}  Z_{-} (s) V(s)  Z_{-}^*(s)  \, ds.
\eeq
The first equality holds for any functions in the range of $C_{+}$ and $C_{-}$ as may
be seen by rational approximation. Adding (\ref{vanish}) to its conjugate transpose
we also find that,
\[
 0 = \int_{-\infty}^0  Z_{-} (s)  \left( \ba{cc} 1 & 0 \\ 0 & 0 \ea \right)
                                 Z_{-}^*(s)   \, ds
        +
        \int_0^{\alpha}  Z_{-} (s)  \left( \ba{cc}  e^{- \frac{4}{3} s^{3/2}} & 0 \\ 0 & 0 \ea \right)
             {Z}_{-}^*(s)  \, ds.
\]
It is immediate that the first column of $Z_{-}$ vanishes $a.e.$ on $(-\infty, \alpha]$,
and so $(Z_{11}, Z_{21} ) = 0$ throughout the lower half plane by analyticity
(it lies in the Hardy class $H^2$).  From the structure of the jump across $\R$ one may next
conclude that the second column of $Z_{+}$ equals  $0$ $a.e.$ on $(-\infty, \alpha)$,
and by the same reasoning
$(Z_{12}, Z_{22} ) = 0$ in the upper half plane.

If we now set,
\[
   a(\zeta) = {Z}_{11}(\zeta)   \mbox{ in }  {\C}_{+} \   \   \
   \mbox{ and }  \  \   \  a(\zeta) = {Z}_{12}  (\zeta)  \mbox{ in }  {\C}_{-},
 \]
 we are led to the
 the scalar $RHP$:
 \be
 \label{scalar1}
 \ba{ll}
    a(\zeta) \mbox{ analytic in }  \C \backslash \R_{-}, &   \\
    a_{+}(\zeta) = a_{-}(\zeta) \, e^{  \frac{4}{3} \zeta_{-}^{3/2}} ,   &    \zeta \in  \R_{-},  \\
    a(\zeta) = \OO(\zeta^{-3/4}),  &  \zeta  \in   \C/ \R_{-}.
\ea
\ee
Note that setting $a = Z _{21}$ in $\C_{+}$ and $ =  Z_{22}$ in $\C_{-}$
produces the identical $RHP$.   If we can conclude that $a(\zeta) \equiv 0$,
then $Z(\zeta)$ and so $M_0^{\sra}(\zeta)$ also vanish identically, proving
that ${\rm ker}(\ID - C_{V^{\sla}}) = 0$.

\begin{lemma} The unique solution to (\ref{scalar1}) is $a(\zeta) = 0$.
\end{lemma}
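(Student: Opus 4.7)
The plan is to reduce the scalar RHP to a statement about an entire function, and then exploit the rotational symmetry $\omega = e^{2\pi i/3}$ (for which $\omega^{3/2} = -1$ in the principal branch) to force vanishing via Liouville's theorem.

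First, I would remove the jump by multiplying $a$ by an appropriate exponential. Using the principal branch of $\zeta^{3/2}$ (cut on $\R_-$), for $\zeta = -x$ with $x>0$ one has $\zeta_+^{3/2} = -ix^{3/2}$ and $\zeta_-^{3/2} = +ix^{3/2}$, so $\zeta_+^{3/2} + \zeta_-^{3/2} = 0$. Setting $F(\zeta) := a(\zeta)\, e^{\frac{2}{3}\zeta^{3/2}}$, the ratio across $\R_-$ becomes $F_+/F_- = (a_+/a_-)\,e^{\frac{2}{3}(\zeta_+^{3/2}-\zeta_-^{3/2})} = e^{\frac{4}{3}\zeta_-^{3/2}}\cdot e^{-\frac{4}{3}\zeta_-^{3/2}} = 1$, so $F$ has no jump and extends to an entire function. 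From the bound $a(\zeta)=\OO(\zeta^{-3/4})$ one gets the pointwise estimate $|F(\zeta)| \le C|\zeta|^{-3/4}\exp\!\bigl(\tfrac{2}{3}|\zeta|^{3/2}\cos(3\theta/2)\bigr)$ with $\theta = \arg\zeta \in (-\pi,\pi]$, extending to $\R_-$ by continuity.

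The key step is to consider the entire function $H(\zeta) := F(\zeta)\,F(\omega\zeta)$ with $\omega = e^{2\pi i/3}$. Tracking the principal branch of $(\omega\zeta)^{3/2}$ requires splitting $\theta$ at $\pi/3$ (where $\arg(\omega\zeta)$ crosses $\pi$ and wraps around). For $\theta \in (-\pi,\pi/3)$ one has $\arg(\omega\zeta) = \theta + 2\pi/3$, whence $\cos\!\bigl(\tfrac{3}{2}\arg(\omega\zeta)\bigr) = -\cos(3\theta/2)$, so the two exponentials cancel and $|H(\zeta)| \le C|\zeta|^{-3/2}$. For $\theta \in (\pi/3,\pi]$ one has $\arg(\omega\zeta) = \theta - 4\pi/3$, whence $\cos\!\bigl(\tfrac{3}{2}\arg(\omega\zeta)\bigr) = +\cos(3\theta/2)$, giving $|H(\zeta)| \le C|\zeta|^{-3/2}\exp\!\bigl(\tfrac{4}{3}|\zeta|^{3/2}\cos(3\theta/2)\bigr)$; but on this range $\cos(3\theta/2) \le 0$, so the exponential is at most $1$ and again $|H(\zeta)| \le C|\zeta|^{-3/2}$. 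Thus $H$ is entire and $\OO(|\zeta|^{-3/2})$ at infinity.

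By Liouville's theorem (the sharp version via Cauchy estimates), $H \equiv 0$, i.e.\ $F(\zeta)F(\omega\zeta) = 0$ for every $\zeta \in \C$. Since $F$ is entire, its zero set is discrete; hence $F(\omega\zeta)$ must vanish on the open complement of this discrete set, forcing $F \equiv 0$ by the identity principle. Consequently $a(\zeta) = F(\zeta)\,e^{-\frac{2}{3}\zeta^{3/2}} \equiv 0$, completing the vanishing lemma.

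The main obstacle is the branch bookkeeping: one must verify the cancellation in the exponent for both regimes of $\theta$, paying attention to what happens at the transition $\arg\zeta = \pi/3$ (the cut of $a(\omega\,\cdot)$) and on the cut $\R_-$ itself, where the bound on $|F|$ must be justified as a continuous limit since $F$ is entire. The choice $\omega = e^{2\pi i/3}$ is what makes the $3/2$-exponents conspire so that the sum of the two indicator functions is nonpositive throughout $\C$; any other rotation fails to give a uniform bound.
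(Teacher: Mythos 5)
Your approach is genuinely different from the paper's: you remove the jump by multiplying by $e^{\frac{2}{3}\zeta^{3/2}}$, verify that $F$ becomes entire, symmetrize via the rotation $\omega = e^{2\pi i/3}$ to cancel the sectorial growth of $|F|$, and invoke Liouville on the product $H = F(\cdot)\,F(\omega\cdot)$. The paper instead pushes the boundary values of $a_\pm$ across $\R_-$, pieces them into a function on a small Riemann surface, straightens that surface by the map $\zeta = \omega^{2+\nu}$, and then kills the result with Carlson's theorem. Your route is arguably more elementary in that it avoids Carlson and the conformal-map bookkeeping, and the role of the factor $3/2$ (which forces $\omega^{3/2}=-1$) is made very transparent. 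The branch arithmetic is also correct: both the jump cancellation for $F$ and the two-regime computation of $\cos\bigl(\tfrac{3}{2}\arg(\omega\zeta)\bigr)$ check out.

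There is, however, a genuine gap, and it sits exactly where the paper is most careful. The growth estimate $|F(\zeta)| \le C|\zeta|^{-3/4}\exp\bigl(\tfrac{2}{3}|\zeta|^{3/2}\cos(3\theta/2)\bigr)$ is derived from the condition $a(\zeta)=\OO(\zeta^{-3/4})$, but that condition (traced back to (\ref{newasymp}) in the paper) is only asserted \emph{uniformly in directions bounded away from the jump contour}, i.e.\ only for $|\arg\zeta|\le\pi-\delta$. You cannot propagate the bound to the rays $\arg\zeta = \pm\pi$ (and hence to $\arg\zeta = \pi/3$, which is where $\omega\zeta$ meets $\R_-$) merely ``by continuity since $F$ is entire'': continuity gives finite pointwise values but not a uniform asymptotic bound as $|\zeta|\to\infty$, and an entire function can perfectly well be small on almost every ray yet blow up on a single exceptional ray. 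Without a bound on $F$ on $\R_-$ at infinity (and, similarly, a boundedness check at $\zeta=0$ so that the extension of $F$ past the origin is really removable), the hypothesis ``$H$ entire and $\OO(|\zeta|^{-3/2})$ at infinity'' is not established, and Liouville cannot be applied. This is precisely the issue the paper addresses head-on: it shows via a Cauchy-integral representation over rays $\arg z = \pm(\pi-\ep/2)$ that the analytic continuations of $a_\pm$ past $\R_-$ are uniformly bounded, and handles the origin by H\"older continuity of the jump. If you incorporate that boundedness lemma (or an equivalent Phragm\'en--Lindel\"of argument, for which one must first supply an a priori order bound in the thin sector around $\R_-$), your argument goes through; as written, the control of $F$ on the cut is assumed rather than proved.
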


\begin{proof}
Up to this point
the jump condition has been understood in the sense of $L^2$.
To go further it is required that  $a_{\pm}(\zeta)$
are uniformly bounded on the negative real axis.
First, since $e^{ \frac{4}{3} \zeta^3/2}$ is analytic off $\R_{-}$,  one actually has
analytic extensions of $a_{+}$ below $\R_{-}$ and  $a_{-}$ above.
That is, (\ref{scalar1}) holds in the sense of continuous boundary values.
Further,
$e^{\zeta_{-}^{3/2}}$ decays as $\zeta$ moves below $\R_{-}$
and  $e^{ - \zeta_{-}^{3/2}}  = e^{ \zeta_{+}^{3/2}}$ decays as
as $\zeta$ moves above $\R_{-}$, and thus both extensions
exhibit at least the same decay as $a(\zeta)$ itself as $\zeta \ra \infty$.

Taking the extension of $a_{+}(\zeta)$ into a region $ \pi < \arg(\zeta) <  \pi + \ep$,  the
Cauchy integral formula provides the representation
\be
   a_{+}(\zeta) = \int_{\mathfrak C} \frac{\tilde{a}(z)}{z - \zeta} \frac{dw}{2 \pi i}  ,   \   \  \  \   \    \zeta \in \R_{-},
\ee
in which
\[
    {\mathfrak C} =  \{ z : \arg(z) =  -\pi + \ep/2  \}  \cup   \{ z : \arg(z) = \pi - \ep/2  \}  =   {\mathfrak C}^{-} \cup {\mathfrak C}^{+},
\]
oriented counter-clockwise, and $\tilde{a}(z) = a(z), a(z) e^{-z^{3/2}} $ on ${\mathfrak C}^{-}, {\mathfrak C}^{+}$.
It follows that for all $\zeta \in (-\infty, \delta]$  $|a_{+}(\zeta)|$ is
bounded by a constant depending only on  $\delta > 0$ .    An identical argument pertains to $a_{-}(\zeta)$.
To achieve a bound down to $\zeta = 0$, we need
only note that the jump $ = e^{\zeta_{-}^{3/2}}$ for $\zeta < 0$ and $ = 1$ for $\zeta \ge 0$ is
 H\"older continuous across zero and  the Cauchy
transform preserves H\"older continuity.

Granted that $a(\zeta)$ is  bounded down to $\R_{-}$ from both directions, consider now
the effect of performing both the above extensions: $a_{+}$ below to an angle $ \pi +  \pi \nu/2$
and $a_{-}$ above to an angle $-\pi - \pi \nu /2$ with small $\nu > 0$ .  The resulting function, denoted by $b(\zeta)$,
can be made to live on a subset $\mathbb A$ of the Riemann surface ${\mathbb K}$ formed by gluing together
three copies of $\C$ cut across $\R_{-}$ in the obvious fashion (alleviating the fact
that the initial domain swept out a region of angle $> 2 \pi$).

Next bring in the transformation $\zeta(\omega) = \omega^{ 2 + \nu}$
which
maps the right half of the $\omega$-plane
onto $\mathbb A$, taking the positive/negative imaginary axes  in $\omega$ onto the lower/upper boundaries
of $\mathbb A$.  Then
$
      {\hat b}(\omega) = b (\zeta(\omega))
$
is analytic in the open half-plane $ \{ \omega : \Re(\omega) > 0 \}$,
bounded in the closed half-plane $ \{ \omega : \Re(\omega) \ge 0 \}$,
and along the boundary satisfies, 
$
  | b(i s) |  \le  C e^{- c |s|^{3/2 ( 2 + \eta)}}  \le  \tilde{C} \, e^{-  \tilde{c} |s|}.
$
A theorem of Carlson (\cite{ReedSimon4} p. 236) then implies that $\hat b \equiv 0$
in the right half plane, which is to say that $ a \equiv 0$.
\end{proof}

\subsubsection{Vanishing lemma for $RHP^{\sla}$}

The steps for $RHP_{\ra}$ are mimicked to the point  that the needed conclusion
hinges on the following.

\begin{lemma}  The  unique solution of
the scalar $RHP$,
\be
\label{scalar2}
 \ba{ll}
    a(\zeta) {\mbox{ analytic in }}  \C \backslash \R_{-}, &   \\
    a_{+}(\zeta) = a_{-}(\zeta) \, e^{ \frac{4}{3} \zeta_{-}^{3/2} +2  \alpha \zeta_{-}^{1/2}} ,   &    \zeta \in  \R_{-},  \\
    a(\zeta) = \OO(\zeta^{-3/4}),  &  \zeta  \in   \C/ \R_{-},
\ea
\ee
is $a \equiv 0$.
\end{lemma}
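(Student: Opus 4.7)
The plan is to mimic, almost verbatim, the proof of the vanishing lemma for (\ref{scalar1}); the single new ingredient is the observation that the additional factor $e^{2\alpha \zeta^{1/2}}$ appearing in the jump is strictly subdominant to $e^{\frac{4}{3}\zeta^{3/2}}$ as $|\zeta| \to \infty$, hence cannot disrupt any of the large-$\zeta$ estimates used previously. Write $\varphi(\zeta) = \frac{4}{3}\zeta^{3/2} + 2\alpha\zeta^{1/2}$; this is holomorphic on $\C \setminus \R_{-}$ with both principal branches.

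First I would establish that $a_{+}(\zeta)$ admits an analytic extension from $\C_{+}$ through $\R_{-}$ into a sector $\arg\zeta \in (\pi, \pi + \pi\nu/2)$ of the Riemann surface obtained by gluing three copies of $\C$ cut along $\R_{-}$, and symmetrically that $a_{-}(\zeta)$ extends from $\C_{-}$ into $\arg\zeta \in (-\pi - \pi\nu/2, -\pi)$. This is immediate from the jump relation $a_{+} = a_{-} e^{\varphi_{-}}$ together with entireness of $e^{\varphi}$ off $\R_{-}$. I would then repeat the Cauchy-formula argument from the previous lemma to deduce uniform boundedness of $a_{\pm}$ on $\R_{-}$, checking that in the chosen extension sectors $\RE\,\varphi(\zeta)$ remains bounded above by a non-positive quantity up to an $O(|\zeta|^{1/2})$ correction coming from $2\alpha\zeta^{1/2}$; the $\zeta^{3/2}$ term dictates the large-$\zeta$ behaviour exactly as before. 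H\"older continuity of the jump at $\zeta = 0$ extends the bound down to the origin.

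Next, performing both extensions produces a function $b(\zeta)$ on a subset $\mathbb A$ of the three-sheeted Riemann surface $\mathbb K$, to which I apply the conformal change of variables $\zeta(\omega) = \omega^{2+\nu}$, $\nu > 0$ small, mapping the closed right half $\omega$-plane onto $\mathbb A$. The resulting $\hat b(\omega) = b(\zeta(\omega))$ is analytic in $\RE\,\omega > 0$, bounded in $\RE\,\omega \ge 0$, and on the imaginary axis $\omega = is$ satisfies
\[
|\hat b(is)| \;\le\; C \exp\bigl(-c|s|^{3(2+\nu)/2} + C'|s|^{(2+\nu)/2}\bigr) \;\le\; \tilde C \, e^{-\tilde c |s|},
\]
the $3(2+\nu)/2$-decay arising from $\frac{4}{3}\RE(\zeta^{3/2})$ and the $(2+\nu)/2$-term from $2\alpha\RE(\zeta^{1/2})$, which is negligible. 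Carlson's theorem \cite{ReedSimon4} then forces $\hat b \equiv 0$, whence $a \equiv 0$.

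The only point of mild care is the bound on $\RE\,\varphi$ in the extension sectors: one must verify that the sign of $\alpha < 0$ together with the sign of $\RE(\zeta^{1/2})$ on the relevant sheet does not introduce growth that could spoil the Cauchy estimate; since $|2\alpha \zeta^{1/2}| = O(|\zeta|^{1/2})$ while $|\zeta^{3/2}|$ grows like $|\zeta|^{3/2}$, this subdominance resolves the issue for $|\zeta|$ large, and only a compact portion of $\R_{-}$ near the origin requires the H\"older-continuity argument. With this in place the entire previous proof transfers without further modification.
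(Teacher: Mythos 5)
Your proof is correct and essentially reproduces the paper's argument, which likewise treats (\ref{scalar2}) as a trivial modification of (\ref{scalar1}) once one observes that the extra exponent $2\alpha\zeta^{1/2}$ cannot disturb the decay estimates used in the extension/Carlson argument. The only cosmetic difference is that the paper notes that, with $\alpha<0$, both $\frac{4}{3}\RE\zeta^{3/2}$ and $2\alpha\RE\zeta^{1/2}$ are negative in the extension sectors near $\arg\zeta=\pm\pi$, so $|e^{\frac{4}{3}\zeta^{3/2}+2\alpha\zeta^{1/2}}|<1$ there outright, whereas you rely only on the subdominance $|\zeta|^{1/2}\ll|\zeta|^{3/2}$ together with the H\"older argument near the origin; both observations suffice and feed into the identical Carlson-theorem conclusion.
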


\begin{proof}
The analysis of (\ref{scalar2}) is really no different than (\ref{scalar1}).
The jump is again analytic in $\C/\R_{-}$, implying that $a$ is continuous and uniformly bounded
down to  $\R_{-}$ by the same type of extension argument.
Since $\alpha > 0$,
\[
    | e^{ \frac{4}{3}\zeta^{3/2} - 2 |\alpha| \zeta^{1/2}  } |  < 1  \   \  \ \mbox{ for } \{ \zeta:  \arg(\zeta) \in (\pi - \ep, \pi)  \cup ( -\pi, -\pi + \ep  )  \}
\]
and
\[
   | e^{ \frac{4}{3} \zeta^{3/2} - 2  |\alpha |\zeta^{1/2}  } | \le e^{- c s^{3/2}}  
   \  \  \ \mbox{ for } c > 0 \mbox{ and }  \zeta  = se^{i ( \pm \pi \mp \ep)} ,  \  s  \ra \infty,
\]
thus extensions share the same qualitative features as above.
The analogs of $b$ and $\hat b$ are then constructed as before and subject to the same conclusions.
\end{proof}

\section{Properties of the solutions}
\setcounter{num}{7}
\setcounter{equation}{0}
\label{properties}

We prove a continuity result for $M^{\sra}(\zeta)$ and $M^{\sla}(\zeta)$ in the parameter $\alpha$
and
establish asymptotics of those matrix functions for $\alpha \ra \pm \infty$; these will
lead  to Theorems \ref{contthm} and \ref{asympcor}.

\subsection{Continuity}

The continuity result is based on verifying the condition of the following general fact;
see Corollary $7.103$ of \cite{Deift99} for a proof.

\begin{proposition}
\label{contprop}
 Consider a family of (uniquely solvable) $RHP$'s  on a fixed contour, $(\Sigma, v_{n})$,
$n = 1, 2, \dots$.
Assume the existence of a  $v_{\infty}$,  such that the $RHP$ $(\Sigma, v_{\infty})$ posseses a unique
solution and
\be
\label{est1}
    ||  v_n - v_{\infty}  ||_{L^{\infty}(\Sigma)  \cap L^2(\Sigma)}  \, \ra 0,  \  \  \   \  n \ra \infty.
\ee
Then,
\be
\label{conc1}
      ||  (m_n)_{\pm} - (m_{\infty})_{\pm} ||_{L^2(\Sigma)}  \ra 0,   \  \
\mbox{ and }
 \  \
     ||  (m_n)(z) - (m_{\infty})  (z) ||_{L^{\infty}(\mathcal A)}    \ra 0,
\ee
for $n \ra \infty$ and any set $\mathcal A $ which is a positive distance from $ \Sigma.$
 \end{proposition}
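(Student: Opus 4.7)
The plan is to translate the continuity statement into operator-norm continuity for the singular integral operators $\mathbb{I} - C_{v_n}$ introduced in (\ref{mudef})--(\ref{solrep}), then recover the matrix $m_n$ via the Cauchy representation. Writing $\mu_n = I + \nu_n$, unique solvability of each RHP is equivalent to invertibility of $\mathbb{I} - C_{v_n}$ on $L^2(\Sigma)$, with $\nu_n = (\mathbb{I} - C_{v_n})^{-1} C_{v_n} I$, and similarly with $\infty$ in place of $n$. Note $C_{v_n} I = C_{-}(v_n - I) \in L^2$ by the $L^2$ half of (\ref{est1}) (together with the standing assumption $v_\infty - I \in L^2$).

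First I would control the difference of operators. Since $C_v f = C_{-}(f(v - I))$ with $C_{-}$ bounded on $L^2(\Sigma)$,
\[
\|C_{v_n} - C_{v_\infty}\|_{L^2 \to L^2} \leq \|C_{-}\|_{L^2 \to L^2}\,\|v_n - v_\infty\|_{L^\infty(\Sigma)},
\]
so (\ref{est1}) forces operator-norm convergence $C_{v_n} \to C_{v_\infty}$. Since $\mathbb{I} - C_{v_\infty}$ is invertible by hypothesis, the standard Neumann-series perturbation argument yields invertibility of $\mathbb{I} - C_{v_n}$ for all large $n$ with $(\mathbb{I} - C_{v_n})^{-1} \to (\mathbb{I} - C_{v_\infty})^{-1}$ in norm. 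Combined with $C_{v_n} I \to C_{v_\infty} I$ in $L^2$ (from the $L^2$ hypothesis and boundedness of $C_{-}$), this gives $\nu_n \to \nu_\infty$ in $L^2(\Sigma)$.

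The first conclusion in (\ref{conc1}) then follows from the identity $(m_n)_\pm = I + C_\pm(\mu_n(v_n - I))$ together with $L^2$-boundedness of $C_\pm$: one decomposes
\[
\mu_n(v_n - I) - \mu_\infty(v_\infty - I) = (\nu_n - \nu_\infty)(v_n - I) + \nu_\infty(v_n - v_\infty) + (v_n - v_\infty),
\]
each piece tending to zero in $L^2$ by the convergences already established. For the uniform conclusion on $\mathcal{A}$ at positive distance $d$ from $\Sigma$, write
\[
m_n(z) - m_\infty(z) = \frac{1}{2\pi i}\int_\Sigma \frac{\mu_n(v_n - I)(s) - \mu_\infty(v_\infty - I)(s)}{s - z}\,ds
\]
and apply Cauchy--Schwarz: the $L^2$ norm of the numerator tends to zero, while $s \mapsto 1/(s - z)$ lies in $L^2(\Sigma, |ds|)$ uniformly in $z \in \mathcal{A}$ provided $\Sigma$ is well-behaved at infinity.

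The main technical point I expect to require care is precisely this last item --- uniform $L^2(\Sigma)$-integrability of the Cauchy kernel for $z \in \mathcal{A}$. On a bounded contour it is automatic, and on the contours actually at play here (finite unions of rays and compact arcs) it follows from $|s - z|^{-2} \lesssim |s|^{-2}$ once $|s|$ is large compared to $\mathrm{diam}(\mathcal{A}) + d$, yielding an integrable tail along each ray. Everything else reduces to soft operator perturbation theory, with the two norms in (\ref{est1}) playing complementary roles: $L^\infty$ for operator-norm convergence of $C_v$, $L^2$ for control of the inhomogeneous term $C_v I$.
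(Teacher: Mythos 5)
Your proposal is correct and follows essentially the same route the paper indicates (and which Corollary 7.103 of \cite{Deift99} carries out): use the $L^\infty$ part of the hypothesis to get operator-norm convergence $C_{v_n}\to C_{v_\infty}$ and hence resolvent convergence, use the $L^2$ part to control the inhomogeneous term $C_{v_n}I=C_{-}(v_n-I)$, conclude $\mu_{v_n}\to\mu_{v_\infty}$ in $I+L^2$, and then read both conclusions of (\ref{conc1}) off the Cauchy representation (\ref{solrep}) via $L^2$-boundedness of $C_\pm$ and Cauchy--Schwarz. You have simply filled in the details the paper leaves to the cited reference, including the correct three-term decomposition of $\mu_n(v_n-I)-\mu_\infty(v_\infty-I)$ and the observation that $\|1/(s-z)\|_{L^2(\Sigma,|ds|)}$ is bounded uniformly for $z\in\mathcal{A}$ on contours of the type occurring here.
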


The condition (\ref{est1}) implies that $\mu_{v_n} = (\ID - C_{v_n})^{-1} I$ satisfies
$ || \mu_{v_n} - \mu_{v_{\infty}} ||_{L^2(\Sigma)} \ra \infty$.  From the expression (\ref{solrep}),
the statements of (\ref{conc1}) easily follow; the first because $C_{\pm}$ map $L^2$ to $L^2$.
Further, one sees that an estimate of the second type holds for the derivatives,
$\frac{d}{dz} [  (m_n)(z) - (m_{\infty}) (z) ]$.  This is the fact referred to in the proof of Theorem \ref{maintheorem}.

\begin{lemma}
\label{contlemma}
The condition (\ref{est1}) is satisfied by the  $RHP$'s  $(V^{\lra}, \Sigma^{\lra})$,
the parameter $\alpha$ playing the role of $n$ in the Proposition.  Continuity  holds
in each problem down to $\alpha = 0$.
\end{lemma}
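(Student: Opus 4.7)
The plan is to verify hypothesis (\ref{est1}) of Proposition \ref{contprop} on a common contour for each one-sided limit $\alpha\to\alpha_0$, treating the three regimes $\alpha_0<0$, $\alpha_0>0$, and $\alpha_0 = 0$ in turn.  First, for $\alpha_0 < 0$ the argument is direct because the contour $\wt\Sigma^{\sla}$ is independent of $\alpha$: the only $\alpha$-dependent entry in $\wt V^{\sla}$ is $e^{\frac{4}{3}\zeta^{3/2}+2\alpha\zeta^{1/2}}$ on the rays $\arg\zeta = \pm\frac{2}{3}\pi$ (and its conjugate by $\mathfrak m$ for $|\zeta|>R$). On those rays $\RE(\zeta^{3/2})=-|\zeta|^{3/2}$ and $\RE(\zeta^{1/2})>0$, so the entry is dominated by $e^{-\frac{4}{3}|\zeta|^{3/2}}$ uniformly in $\alpha \le 0$, and the mean value theorem yields
\[
|\wt V^{\sla}_\alpha(\zeta) - \wt V^{\sla}_{\alpha_0}(\zeta)| \le C\,|\alpha-\alpha_0|\,|\zeta|^{1/2}\,e^{-\frac{4}{3}|\zeta|^{3/2}},
\]
whose $L^\infty$ and $L^2$ norms vanish as $\alpha\to\alpha_0$.

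For $\alpha_0 > 0$, the interval $(0,\alpha)\subset\Sigma^{\sra}$ moves with the parameter, obstructing direct application of Proposition \ref{contprop}. The plan is to reduce to a fixed contour by the scaling $\zeta=\alpha w$: set $N_\alpha(w):=\alpha^{\sigma_3/4}M^{\sra}_\alpha(\alpha w)$ and verify that $N_\alpha$ satisfies an $RHP$ on the $\alpha$-independent contour $(0,1)\cup\{\arg w=\pm\frac{2}{3}\pi\}\cup(-\infty,0)$, with the large-$w$ normalization (\ref{Masymp1}) preserved and non-constant entries $e^{\mp\frac{4}{3}\alpha^{3/2}w^{3/2}}$ in the jumps. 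After repeating the taming-at-infinity construction of Section \ref{existence}, all $\alpha$-dependence is concentrated in the single smooth parameter $\alpha^{3/2}$, and the same mean-value bound as above verifies (\ref{est1}). Continuity of $M^{\sra}_\alpha$ on compact sets of $\C\setminus\Sigma^{\sra}_{\alpha_0}$ then follows by unscaling.

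At the endpoint $\alpha_0 = 0$, the left-hand limit is already covered by the first paragraph's estimate, which is uniform down to $\alpha_0=0$. The right-hand limit $\alpha\downarrow 0$ cannot be reached by the scaling (the rescaled jump degenerates to the identity while $\alpha^{-\sigma_3/4}$ blows up), so I would argue instead by an exact conjugation. Because the jump on $(0,\alpha)$ is upper triangular with entry analytic across the interval, the matrix
\[
P_\alpha(\zeta) := \begin{pmatrix} 1 & \frac{1}{2\pi i}\int_0^\alpha\frac{e^{-\frac{4}{3}s^{3/2}}}{s-\zeta}\,ds\\ 0 & 1\end{pmatrix}
\]
is analytic off $[0,\alpha]$, carries precisely the prescribed jump of $M^{\sra}_\alpha$ across $(0,\alpha)$, and satisfies $P_\alpha = I+O(1/\zeta)$ as $\zeta\to\infty$. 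It follows that $M^{\sra}_\alpha P_\alpha^{-1}$ solves $RHP^{\sla}$ at $\alpha=0$, and uniqueness gives the explicit formula $M^{\sra}_\alpha=M^{\sla}_0\,P_\alpha$. Since $P_\alpha\to I$ uniformly on sets at positive distance from the origin as $\alpha\downarrow 0$, right-continuity at $\alpha_0=0$ is immediate.

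The main obstacle is the moving contour in the $\alpha>0$ regime. The scaling reduction is natural but forces one to redo the taming at infinity and the local endpoint conjugation of Section \ref{existence} under the continuous parameter; tracking that these preliminary steps and their estimates are uniform as $\alpha\to\alpha_0$ is the bulk of the bookkeeping, although the required ingredients are already present in the existence argument. Fortunately the $\alpha_0=0$ endpoint, where the scaling degenerates, is handled cleanly by the closed-form factorization $M^{\sra}_\alpha=M^{\sla}_0 P_\alpha$, so no additional analytical input is needed there.
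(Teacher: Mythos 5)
Your treatment of the $\alpha_0<0$ case (and the left limit at $0$) matches the paper, and the scaling idea for $\alpha_0>0$ --- setting $N_\alpha(w)=\alpha^{\sigma_3/4}M^{\sra}_\alpha(\alpha w)$ to freeze the contour and concentrate the parameter in $\alpha^{3/2}$ --- is a legitimate alternative to the paper's method (which keeps $\zeta$ fixed and conjugates out a local parametrix for the jump on $[a,\beta]$ inside a disk $D_b$ that avoids the origin, so the $\alpha$-dependence lands only on a fixed circle $\partial D_b$).  Both routes end in the same appeal to Proposition \ref{contprop}; yours trades a moving contour for a parameter inside the exponent, and still requires redoing the Fredholm conjugation at the corner $w=1$ in a way that is uniform in $\alpha$, which you acknowledge but do not carry out.

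The argument for the right-hand limit at $\alpha_0=0$, however, has a genuine error.  You claim that, with $P_\alpha(\zeta)=\left(\begin{smallmatrix}1 & C_\alpha(\zeta)\\ 0 & 1\end{smallmatrix}\right)$ and $C_\alpha(\zeta)=\frac{1}{2\pi i}\int_0^\alpha\frac{e^{-\frac{4}{3}s^{3/2}}}{s-\zeta}\,ds$, the matrix $M^{\sra}_\alpha P_\alpha^{-1}$ solves the $\alpha=0$ problem, whence $M^{\sra}_\alpha=M^{\sla}_0P_\alpha$.  This fails because the right multiplication by $P_\alpha^{-1}$ is a \emph{global} conjugation: while it cancels the jump on $(0,\alpha)$, it replaces the jump $V$ on every other piece of $\Sigma^{\sra}$ where $P_\alpha$ is analytic by $P_\alpha\,V\,P_\alpha^{-1}$.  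Concretely, on $\arg\zeta=\pm\frac{2}{3}\pi$ one obtains
\[
P_\alpha\begin{pmatrix}1&0\\ e^{\frac{4}{3}\zeta^{3/2}}&1\end{pmatrix}P_\alpha^{-1}
=\begin{pmatrix}1+C_\alpha e^{\frac{4}{3}\zeta^{3/2}}&-C_\alpha^2 e^{\frac{4}{3}\zeta^{3/2}}\\ e^{\frac{4}{3}\zeta^{3/2}}&1-C_\alpha e^{\frac{4}{3}\zeta^{3/2}}\end{pmatrix}
\neq \begin{pmatrix}1&0\\ e^{\frac{4}{3}\zeta^{3/2}}&1\end{pmatrix},
\]
and similarly the twist jump on $(-\infty,0)$ is conjugated to $\left(\begin{smallmatrix}-C_\alpha & 1+C_\alpha^2\\ -1 & C_\alpha\end{smallmatrix}\right)$.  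So $M^{\sra}_\alpha P_\alpha^{-1}$ does not satisfy $RHP^{\sla}$ at $\alpha=0$, and the closed-form factorization does not exist.  The paper avoids this precisely by performing the conjugation by $P_\beta$ only \emph{inside a disk} $D_{1/2}$, after first \emph{delaying} the rays so that they emanate from $-1$ rather than $0$ (so that $[0,\beta]$ is not a boundary segment where the rays meet the interval); the price is a nontrivial new jump on $[-1/2,0]$ inside the disk, equation (\ref{jumpconj}), whose $L^\infty\cap L^2$ continuity as $\beta\downarrow 0$ must be checked by the explicit computation in (\ref{21term}) and surrounding.  Some variant of this local conjugation plus delaying step is needed at $\alpha_0=0$; the global conjugation shortcut does not close the argument.
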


\begin{remark}  Note that the problem $RHP^{\sra}$ has $\alpha$-dependence
in the contour itself via the segment $[ 0, \alpha)$.   In this case, for the $L^2$ continuity of the boundary values
 $(M^{\sra})_{\pm} = (M_{\alpha}^{\sra})_{\pm}$,  we show continuity in $L^{2}( \wh \Sigma^{\sra})$
where in $\wh \Sigma^{\sra}$, the segment $[0,\alpha)$ is extended to  $[0, \alpha^{\pr})$  for any $\alpha^{\pr} > \alpha$.
\end{remark}

\begin{proof}[Proof of Lemma \ref{contlemma} for $RHP^{\sra}$]
To employ the conditions of Proposition \ref{contprop} a preliminary conjugation is
made to move the dependence of the problem on $\alpha \ge 0$ from the contour
into the jump.

Consider first the continuity at a point  $\alpha > 0$. In this case the conjugation
is  affected by the same parametrices used in the poof of existence.   Set aside a neighborhood
of $\alpha$,  $U_{\alpha, \ep} = (\alpha - \ep, \alpha + \ep)$  for $\ep > 0$   with $\ep \ll \alpha$.  Fix also
positive $a$ and $b$ with $a <  b < \alpha - \ep$ and disks $D_{a}$, $D_b$ enclosing
$\alpha + \ep$ ($D_b \subset D_a$).
Within $D_a$, and for any $\beta \in U_{\alpha, \ep}$, we have the parametrices,
\[
  P_{\beta}(\zeta)  = \left( \ba{cc}     1   &    \frac{1}{2 \pi i} \int_a^{\beta}   \frac{e^{- \frac{4}{3}s^{3/2}}}{ s - \zeta} ds \\
                                                                 0  &   1 \ea       \right)  \equiv \left(  \ba{cc} 1 & C_{\beta}(\zeta) \\ 0 & 1   \ea \right).
\]
That is, $P_{\beta}(\zeta)$ satisfies the jump condition across $a < \zeta < \beta$.  Next define,
\be
 \label{inDb}
      \wt M_{\beta}(\zeta) =  \left\{  \ba{ll}   M^{\sra}(\zeta) \, P_{\beta}(\zeta),  &  \zeta  \in D_b, \\
                                                      M^{\sra}(\zeta), & \zeta \in \C \backslash D_b. \ea \right.
\ee
Now for all $\beta$ in the defined  range we have a family of $RHP$'s on the same contour, with the dependence
of $\beta$ occurring only in the jump
\[
  \wt V_{\beta}(\zeta) =  P_{\beta}^{-1}(\zeta),   \   \  \  \mbox{ for }  \zeta \in   \partial D_b.
\]
Also, for all $\zeta \in \partial D_b$  except $\zeta = b$,
\be
\label{contestimate}
  | C_{\beta} (\zeta) -  C_{\alpha}(\zeta) | \le     \Bigl|  \int_{\beta}^{\alpha}  \frac{e^{-4/3 s^{3/2} }}{s - \zeta}  ds  \Bigr|   = \OO(\beta- \alpha),
\ee
there being a positive distance $\zeta$ separating  and the interval between $\alpha$ and  $\beta$ in this case.  On the other hand $(C_{\beta})_{\pm}(b)
= (C_{\alpha})_{\pm}(b)$.   Thus, the above  $L^{\infty}$  estimate
leads to an $L^2(\partial D_b)$ of the same order.

It  follows see that $\wt M_{\beta}$ satisfies the criteria of Proposition \ref{contprop},
and so $M^{\sra}(\zeta)$ is continuous at any $\alpha > 0$ in the sense of
its boundary values in $L^2( \Sigma^{\sra} \cap (\C \backslash D_b)$,
and also in $L^{\infty}$ for $\zeta$ exterior to $D_b$ and away from $\Sigma^{\sra}$.
This already gives the type of continuity claimed in Theorem \ref{contthm} and Claim \ref{claim2}
used in the proof of Theorem \ref{maintheorem}.

We complete the analysis by showing the boundary data is $L^2$-continuous in the interior
 $D_b$. Inverting the move (\ref{inDb}) we have.
\beq
\label{backin}
 M^{\sra}(\zeta)  & = &  \wt M_{\beta}(\zeta)  P_{\beta}^{-1}(\zeta) \\
                  & = &   - \wt M_{\beta} (\zeta) \, C_{\beta}(\zeta) \,   \left( \ba{cc}   0 &   1  \\ 0 & 0  \ea \right)
                                +   \wt M_{\beta}(\zeta).  \nonumber
\eeq
Consider the  $\pm$-limits of the right hand: we want to show they are continuous
in $L^2[b, \alpha + \ep]$  as $\beta$ ranges in $U_{\alpha, \ep}$.
First,
$\wt M_{\beta}(\zeta)$  is analytic inside of $D_b$ with continuous boundary values along $\partial D_b$,
excepting the point $\zeta = b$.
It therefore lies in  $L^{\infty}$ of that interval, and  the conclusions
above include that $\beta \ra \wt M_{\beta}(\zeta)$ is continuous in $L^2[b, \alpha+ \ep]$.
A look at the second line of (\ref{backin}) explains that it remains to show
that $(C_{\beta})_{\pm}$ are continuous in $L^2[b, \alpha+\ep]$.  But, taking
$\beta \downarrow \alpha$ from above without any loss of generality,
\beqn
\label{L2first}
  || (C_{\beta})_{\pm}  - (C_{\alpha})_{\pm}    ||_{L^2[b, \alpha+\ep]}^2   & = &
      \int_{\alpha}^{\beta}   e^{-\frac{8}{3}s^{3/2}}  \,  ds
      +  \frac{1}{4 \pi^2} \int_{\beta}^{\alpha+ \ep}    \left|  \int_{\alpha}^{\beta}  \frac{e^{-\frac{4}{3}s^{3/2}}}{s - t}  \, ds \right|^2   dt  \,  \ra \, 0.
\eeqn
Here, the H\"older continuity of $e^{-4/3 s^{3/2}}$  produces the vanishing of the integral over $[b, \alpha]$,
and  the first term on the right follows from $ || C_{\pm} \diamond ||_{L_2} \le || \diamond ||_{L^2}$.
This completes the proof for $\alpha > 0$.

Turning to the case $\alpha = 0$, the first step is to delay
the jumps to the left of the origin in the original problem
$( V^{\sra}, \Sigma^{\sra})$ by considering the equivalent $RHP$:
\[
\ba{ll}
      \wh M_+(\zeta) = \wh M_{-}(\zeta)  \left( \ba{cc}   1 & e^{- \frac{4}{3} \zeta^{3/2}} \\ 0 & 1 \ea  \right),  &  \zeta \in [0, \beta],  \\
     \wh M_+(\zeta) = \wh M_{-}(\zeta)  \left( \ba{cc}   e^{ \frac{4}{3}{\zeta}_{+}^{3/2}}  & 1
                                                                     \\ 0 & e^{{- \frac{4}{3}\zeta}_{-}^{3/2}}  \ea  \right),  &  \zeta \in [-1, 0],  \\
        \wh M_+(\zeta) = \wh M_{-}(\zeta)  \left( \ba{cc}   0  & 1
                                                                     \\ -1 & 0  \ea  \right),  &  \zeta \in (-\infty, -1),      \\
             \wh M_+(\zeta) = \wh M_{-}(\zeta)  \left( \ba{cc}   1  & 0
                                                                     \\ e^{
                                                                      \frac{4}{3}{\zeta}^{3/2}}  & 1 \ea  \right),  &
                                                                       \zeta \in \{ \zeta :  \arg(1 + \zeta)    = \pm \frac{2}{3} \pi  \},
\ea
\]
where $\wh M(\zeta)$ is otherwise analytic and equals $(I + \OO(\zeta^{-1}) ) {\mathfrak m}(\zeta)$  as $\zeta \ra \infty$
(recall (\ref{twistsol})).
This problem is obtained from $(V^{\sra}, \Sigma^{\sra})$ by setting
\[
   \wh M(\zeta ) = M^{\sra} (\zeta) \left( \ba{cc}  1 & 0 \\  \pm e^{\frac{4}{3} \zeta^{3/2} } & 1 \ea \right),  \  \   \  \  \
   \zeta \in {\mathcal W}_{\pm},
\]
where $\mathcal W_{\pm}$  is the intersection of $
\C_{+}$ or $
\C_{-}$ with the
region bounded between the rays
\[
    \{ \zeta :  \arg(1 + \zeta)    = \pm \frac{2}{3} \pi \}  \  \mbox{ and }  \ \{ \zeta :  \arg(\zeta)    = \pm \frac{2}{3} \pi \}.
\]
Proving we have $L^2$ continuity here for $\beta \downarrow 0$ will imply the
same for the original problem.

Similar to above, we now set
\be
\label{secondP}
P_{\beta}(\zeta) = \left( \ba{cc} 1 &
                                                    \frac{1}{2 \pi i} \int_{-1}^{\beta} \frac{f(s)}{s - \zeta} ds \\ 0 & 1 \ea \right)
                             \equiv     \left( \ba{cc} 1 & C_{\beta}(\zeta) \\ 0       & 1 \ea \right),
\ee
where
\be
\label{seconexp}
 f(s) =   1  \mbox{ for }   -1 < s < 0  \    \mbox{   and    }  \    f(s)   =  e^{- \frac{4}{3} s^{3/2}} \mbox{ for }  0 \le s < \beta.
\ee
Again, the point is that  $P_{\beta}(\zeta)$ satisfies the jump condition
\[
   (P_{\beta})_+(\zeta) = (P_{\beta})_{-}(\zeta)  \, \left( \ba{cc}   1 &  f(\zeta) \\ 0  & 1 \ea  \right),  \   \   \  \zeta \in (-1, \beta).
\]
Conjugating out by $P_{\beta}(\zeta)$ inside a disk $D_{1/2} = \{ \zeta:  |z - 1/2| < 1/2 \}$ has three affects.
First, a new jump of $P_{\beta}^{-1}(\zeta)$ is produced along $\partial D_{1/2}$.  Second, the jump which $\wh M(\zeta)$
has across $[0, \beta]$ is eliminated.  Third, the jump across $[-1/2, 0]$ now reads
\beq
\label{jumpconj}
\lefteqn{ \hspace{-2cm}
     \left( \ba{cc}   1  & (C_{\beta})_{-}(\zeta) \\ 0 & 1 \ea  \right)  \left( \ba{cc}   e^{\frac{4}{3} \zeta_+^{3/2}} & 1
                \\ 0 &  e^{-\frac{4}{3} \zeta_-^{3/2}}  \ea  \right)
     \left( \ba{cc}   1 &  -(C_{\beta})_+(\zeta) \\ 0 & 1 \ea  \right) } \\
     &  &  \   \   \   \  =  \left(  \ba{cc}    e^{\frac{4}{3} \zeta_-^{3/2}}  &
     1 + (C_{\beta})_{-}(\zeta) e^{\frac{4}{3} \zeta_{-}^{3/2}}  - (C_{\beta})_{+}(\zeta)  e^{-\frac{4}{3} \zeta_{+}^{3/2}}         \\
                                  0  & e^{\frac{4}{3} \zeta_-^{3/2}} \ea  \right).  \nonumber
\eeq
We already understand that the jump $P_{\beta}^{-1}(\zeta)$ is continuous in $L^{\infty} \cap L^2$ of $\partial D_{1/2}$.  To check
that the jump (\ref{jumpconj}) satisfies the like conditions over $-1/2 \le \zeta \le 0$, note that it is only the $(2,1)$-entry
which requires investigation and that term (neglecting the constant $1$) may be rewritten as in,
\beq
\label{21term}
 \lefteqn{  (C_{\beta})_{-}(\zeta) e^{\frac{4}{3} \zeta_{-}^{3/2}}  - (C_{\beta})_{+}(\zeta)  e^{-\frac{4}{3} \zeta_{+}^{3/2}}  } \\
   & = &   (C_{\beta})_{-}(\zeta)  -  (C_{\beta})_{+}(\zeta) +    (C_{\beta})_{-}(\zeta)  \left(e^{\frac{4}{3} \zeta_{-}^{3/2}} - 1 \right)
                            -    (C_{\beta})_{+}(\zeta) \left( e^{-\frac{4}{3} \zeta_{+}^{3/2}} - 1 \right)  \nonumber \\
   & = & -1 +     (C_{\beta})_{-}(\zeta)  \left(e^{\frac{4}{3} \zeta_{-}^{3/2}} - 1 \right)
                            -    (C_{\beta})_{+}(\zeta) \left( e^{-\frac{4}{3} \zeta_{+}^{3/2}} - 1 \right).            \nonumber
\eeq
The continuity in $L^2[-1/2,0]$ as $\beta \downarrow 0$ follows by a computation similar to (\ref{L2first}) and
the boundedness of $ e^{\pm\frac{4}{3} \zeta_{\pm}^{3/2}} - 1$.   As for the continuity in $L^{\infty}$ recall that
the maps $C_{\pm}$ maintain  H\"older continuity, so there is no problem for $\zeta$ in the interior of $[-1/2, 0]$.
The potential issue of the logarithmic singularity of $ (C_{\beta})_{\pm}(0) $ as $\beta \downarrow 0$ is
countered by the fact that $e^{\mp\frac{4}{3} \zeta_{\mp}^{3/2}} - 1$ vanish to higher order at the origin.

The criteria (\ref{est1}) has thus been checked for the new $RHP$ created by the  conjugation by $P_{\beta}(\zeta)$
defined in (\ref{secondP}) and (\ref{seconexp}) within a neighborhood $D_{1/2}$ of the origin.  It remains to invert this
move and show that $L^2$-continuity at $\alpha  = 0$ of $\wh M_{\pm}$ (and so $M^{\sra}_{\pm}$) follows suit.
However, the  needed argument is identical to that given above in (\ref{backin})
and surrounding discussion.
\end{proof}

\begin{proof}[Proof of Lemma \ref{contlemma} for $RHP^{\sla}$]
The verification of the conditions in this case is straightforward on account of the
contour $\Sigma^{\sla}$ being independent of $\alpha$ from the start.
 For any positive $\alpha$ and $\beta$,
 the difference of $V_{\beta}^{\sla}$ and  $V_{\alpha}^{\sla}$  of course vanishes on $R_{-}$, while on
 the lines $\gamma_{\pm} = \{ \zeta:  \arg(\zeta) = \pm \frac{2}{3} \pi \}$,
 \[
   (V_{\beta}^{\sla} - V_{\alpha}^{\sla})(\zeta) =  \left(   \ba{cc}   0  & 0 \\
                      e^{\frac{4}{3} \zeta^{3/2}} ( e^{2\beta \zeta^{1/2} } - e^{2 \alpha \zeta^{1/2} } ) & 0   \ea\right).
 \]
Due to the decay of $e^{4/3 \zeta^{3/2}}$ along $\gamma^{\pm}$ it is plain that
\[
   ||  e^{\frac{4}{3} \zeta^{3/2}} ( e^{2 \beta \zeta^{1/2} } - e^{ 2 \alpha \zeta^{1/2} } ) ||_{L^{\infty}(\gamma^{\pm}) \cap L^2(\gamma^{\pm}) } \ra 0,
\]
as $\beta \ra \alpha$, the case of $\alpha = 0$ and $\beta \downarrow 0$  being no different.
\end{proof}

\subsection{Asymptotics as $\alpha \ra \pm \infty$}

As  $\alpha \ra + \infty$, it is intuitive that the (unique) solution of $RHP_{\ra}$ should converge to the
solution $P_{\rm A}(\zeta)$ of the $RHP$ defined by the jump conditions,
\be
\label{airy}
\ba{ll}
   (P_{\rm A})_{+}(\zeta) = (P_{\rm A})_{-}(\zeta) \left( \ba{cc}  1  & e^{- \frac{4}{3} \zeta^{3/2} }  \\ 0 & 1 \ea \right),   &   \zeta   \in R_{+} , \\
  ( P_{\rm A})_{+}(\zeta) = (P_{\rm A})_{-}(\zeta) \left( \ba{cc}  1  & 0\\ e^{\frac{4}{3} \zeta^{3/2} } &  1 \ea \right),  &
   \arg \zeta  = \pm  \frac{2}{3} \pi,   \\
  ( P_{\rm A})_{+}(\zeta) = P_{-}(\zeta) \left( \ba{cc}  0  & 1\\ -1  & 0 \ea \right),   &   \zeta   \in R_{-} , \\
\ea
\ee
with $P(\zeta)$ having the same asymptotics as $M^{\ra}(\zeta)$ as $\zeta \ra \infty$.  As is well known,
$P(\zeta)$ is given explicitly in terms of  the Airy function ${\Ai}(\zeta)$ and its derivative .
In particular,  with $\om = e^{\frac{2}{3} \pi i}$, let
\be
\ba{lll}
     P(\zeta) & =     \left( \ba{cc}  \Ai(\zeta)  &  \Ai(\om^2 \zeta)  \\ \Ai^{\pr}(\zeta)  & \Ai^{\pr}( \om^2 \zeta)  \ea \right), &
          \zeta \in \C_{+} ,  \\
     P(\zeta) & =      \left( \ba{cc}  \Ai(\zeta)  &    - \om^2 \Ai(\om^2 \zeta)  \\  \Ai^{\pr}(\zeta)  & - \Ai^{\pr}( \om^2 \zeta)  \ea \right),
          & \zeta \in \C_{-} ,
         \ea
\ee
and $\Upsilon(\zeta) =     \left( \ba{cc} 1 & 0 \\ e^{\frac{4}{3} \zeta^{3/2}} & 1 \ea \right)  $.   Then,
\be
\ba{lc}
   P_{\rm A} (\zeta)  = \sqrt{ 2 \pi} e^{ -\pi i / 12}  \, P(\zeta) \,  e^{( \frac{2}{3} \zeta^{3/2} - \frac{\pi i}{6} ) \sigma_3},   &
    - \frac{2}{3} \pi <      \arg \zeta   < \frac{2}{3} \pi, \\
   P_{\rm A} (\zeta)  = \sqrt{ 2 \pi} e^{ -\pi i / 12}  \, P(\zeta) \,  e^{( \frac{2}{3} \zeta^{3/2} - \frac{\pi i}{6} ) \sigma_3} \,
      \Upsilon(\zeta)^{-1}  ,   &
        \frac{2}{3} \pi < \arg \zeta  < \pi,   \\
    P_{\rm A} (\zeta)  = \sqrt{ 2 \pi} e^{ -\pi i / 12}  \, P(\zeta) \,  e^{( \frac{2}{3} \zeta^{3/2} - \frac{\pi i}{6} ) \sigma_3}
       \, \Upsilon(\zeta)  ,   &
      - \pi < \arg \zeta   <  -\frac{2}{3} \pi
\ea
\ee
We have the following.

\begin{lemma}
\label{airyasymp}
As $\alpha \ra + \infty$,
\be
   M^{\sra}(\zeta)  (P_{\rm A})^{-1}(\zeta)  =  \Bigl( I +  {\OO} (e^{- \frac{2}{3} \alpha^{3/2}} ) \Bigr),
\ee
uniformly for $\zeta$ supported away from $\C \backslash [\alpha, \infty)$.
\end{lemma}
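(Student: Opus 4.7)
The plan is to set $R(\zeta) = M^{\sra}(\zeta)(P_{\rm A})^{-1}(\zeta)$ and realize it as the solution of a small-norm Riemann--Hilbert problem supported only on the half-line $(\alpha,\infty)$. Comparing the jumps described in $RHP^{\sra}$ with those in (\ref{airy}), one checks that $M^{\sra}$ and $P_{\rm A}$ carry identical jump matrices on $(-\infty,0)$, on the rays $\arg\zeta = \pm \tfrac{2}{3}\pi$, and on $(0,\alpha)$: on the latter both prescribe the upper-triangular jump with off-diagonal entry $e^{-\frac{4}{3}\zeta^{3/2}}$. Hence $R$ extends analytically across all of these contours, and the only surviving discrepancy is on $(\alpha,\infty)$, where $M^{\sra}$ is analytic but $P_{\rm A}$ still has the jump $J = \left(\ba{cc} 1 & e^{-\frac{4}{3}\zeta^{3/2}} \\ 0 & 1 \ea\right)$. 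A routine computation then gives $R_+ = R_- v_R$ with
\[
  v_R(\zeta) = (P_{\rm A})_-(\zeta)\, J^{-1}(\zeta) \, (P_{\rm A})_-^{-1}(\zeta), \qquad \zeta \in (\alpha,\infty).
\]
Matching the common leading behavior (\ref{Masymp1}) at $\infty$ yields $R(\zeta) = I + \OO(\zeta^{-1})$, so $R$ is genuinely normalized.

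The next step is to estimate $v_R - I$. Writing $P_{\rm A}(\zeta) = \sqrt{2\pi}\,e^{-\pi i/12} P(\zeta)\, e^{(\frac{2}{3}\zeta^{3/2} - \frac{\pi i}{6})\sigma_3}$ and using that $J^{-1} - I = \left(\ba{cc} 0 & -e^{-\frac{4}{3}\zeta^{3/2}} \\ 0 & 0 \ea\right)$, the conjugation by the diagonal factor $e^{\frac{2}{3}\zeta^{3/2}\sigma_3}$ exactly cancels the $e^{-\frac{4}{3}\zeta^{3/2}}$ exponential, so that
\[
  v_R(\zeta) - I = -e^{-\pi i/3}\, P(\zeta) \left( \ba{cc} 0 & 1 \\ 0 & 0 \ea \right) P^{-1}(\zeta).
\]
Since $\det P$ is a (nonzero) constant Wronskian of Airy solutions, the four entries of this matrix are simply products of $\Ai(\zeta)$ and $\Ai'(\zeta)$ up to constants. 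The classical large-argument asymptotics $|\Ai(\zeta)|,\ \zeta^{-1/4}|\Ai'(\zeta)| = \OO(e^{-\frac{2}{3}\zeta^{3/2}})$ therefore give $\|v_R - I\|_{L^{\infty}(\alpha,\infty)\cap L^2(\alpha,\infty)} = \OO(e^{-\frac{4}{3}\alpha^{3/2}})$, comfortably within the $\OO(e^{-\frac{2}{3}\alpha^{3/2}})$ claim of the lemma.

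With these estimates in hand the argument concludes via the small-norm theory recalled in Section~\ref{existence}: the operator $\ID - C_{v_R}$ is invertible on $L^2(\alpha,\infty)$, $\|\mu_{v_R} - I\|_{L^2}$ is of the same order as $\|v_R - I\|_{L^2}$, and the integral representation (\ref{solrep}) delivers the pointwise bound $R(\zeta) - I = \OO(e^{-\frac{2}{3}\alpha^{3/2}})$ uniformly for $\zeta$ at a positive distance from $[\alpha,\infty)$. The main obstacle I anticipate lies in justifying the analyticity of $R$ at the endpoint $\zeta = \alpha$: both $M^{\sra}$ and $P_{\rm A}$ inherit a mild (logarithmic-type) singularity there, and one must verify that these singularities cancel in $R = M^{\sra}(P_{\rm A})^{-1}$. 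This is handled by noting that the local structure of both factors near $\zeta = \alpha$ is governed by the same Cauchy-integral parametrix $P_\alpha$ exhibited in (\ref{6.8}), so the conjugation yields an entry which is analytic across $\alpha$; once this local regularity is checked, the small-norm framework applies without further complication.
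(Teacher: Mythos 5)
Your proposal follows essentially the same approach as the paper's proof: form $R=M^{\sra}(P_{\rm A})^{-1}$, observe that all jumps cancel except on $[\alpha,\infty)$, conjugate out the exponential in $P_{\rm A}$ to write $v_R-I$ as a constant times $P\bigl(\begin{smallmatrix}0&1\\0&0\end{smallmatrix}\bigr)P^{-1}$ whose entries are Airy products decaying like $e^{-\frac{4}{3}\zeta^{3/2}}$, and then invoke the small-norm theory. The worry you raise about a possible logarithmic singularity of $R$ at $\zeta=\alpha$ is reasonable to flag but is not a genuine obstacle: the jump $v_R$ on $[\alpha,\infty)$ is bounded and continuous down to $\alpha$ (it simply does not equal $I$ there), and the $L^2$ small-norm framework of Proposition \ref{neumann} applies directly without requiring $R$ to be analytic at the finite endpoint, which is how the paper treats it.
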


If instead $\alpha \ra - \infty$, one takes advantage of two facts.  First,   the
jump for  $RHP^{\sla}$ along $\arg \zeta = \pm \frac{2}{3} \pi$ satisfies
\[
    \left( \ba{cc}  1 &  0 \\ e^{ \frac{4}{3} \zeta^{3/2}  + 2  \alpha \zeta^{1/2} } & 1  \ea \right)
    =   \left( \ba{cc}  1 &  0 \\ e^{  - 2  |\alpha| \zeta^{1/2} (1 + o(1))}  & 1  \ea \right),    \mbox{ for  } \zeta =  { o}(|\alpha|),
\]
and, second, the unique solution of the $RHP$: $P_{\rm B}(\zeta)$ analytic in $\C \backslash \Sigma^{\sra}$,
\be
\label{Bjumps}
\ba{ll}
  (P_{\rm B})_{+} (\zeta) =  (P_{\rm B})_{-} (\zeta) \left( \ba{cc}  1 &  0 \\ e^{  -  2 \zeta^{1/2} }  & 1  \ea \right), &
    \zeta \in \gamma^{\pm} \\
  (P_{\rm B})_{+} (\zeta) =  (P_{\rm B})_{-} (\zeta) \left( \ba{cc}  0 &  1 \\   { - 1}  & 0 \ea \right), &    \zeta \in (-\infty, 0),
\ea
\ee
and
\be
\label{Basym}
 \hspace{1cm}  P_{\rm B} (\zeta) =    \zeta^{- \sigma_3 /4}  \frac{1}{\sqrt{2}}
      \left(  \ba{cc}  1 &  i \\ i  & 1 \ea  \right)
    \Bigl( I + \OO ( {| \zeta|^{-1/2}} ) \Bigr),  \  \   \   \   \     \zeta \ra \infty  ,
\ee
is known explicitly in terms of Hankel functions.  Here $\gamma^{\pm}$ are any rays  (eventually straight) rays
extending above and below the negative real axis as in Figure \ref{besselcurve}.
 We have in fact already seen the solution in part.
 Set  $Q(\zeta)$ to be as defined in $(\ref{Qdef1})$, $(\ref{Qdef2})$
and $(\ref{Qdef3})$ but in regions $I$, $II$, and $III$ respectively (see again Figure \ref{besselcurve}).
Then,
\be
\label{PBdef}
   P_{\rm B} (\zeta)  = \sqrt{2 \pi} \, Q( \zeta) \,  e^{ - {\zeta}^{1/2} \sigma_3 }.
\ee
That (\ref{PBdef}) satisfies the jumps (\ref{Bjumps}) is immediate from the jump relations for
$Q(\zeta)$. Note that replacing the straight lines $\arg \zeta = \pm 2 \pi/3$ with $\zeta \in \gamma^{\pm}$
has no affect: $Q(\zeta)$ is analytic off $\R^{-}$ and the jump contours may be deformed to accomodate
this change. Lastly, the asymptotics  (\ref{Basym}) can be  verified from substituting the formulas,
\[
  H_{0}^{(1)}(\zeta) = \sqrt{ \frac{2}{ \pi \zeta} } e^{ i (\zeta - \frac{\pi}{4})} ( 1 + \OO( \zeta^{-1/2} ) )
  ,  \  \  \  \   H_{0}^{(2)}(\zeta) = \sqrt{ \frac{2}{ \pi \zeta} } e^{ -i (\zeta - \frac{\pi}{4})} ( 1 + \OO( \zeta^{-1/2} ) ),
\]
for $\zeta \ra \infty$ (\cite{AS}, formulas 9.7.1 - 9.7.4) into the definition of $ Q(\zeta)$.


\begin{figure}
\centerline{   
            \scalebox{0.65}{            \includegraphics{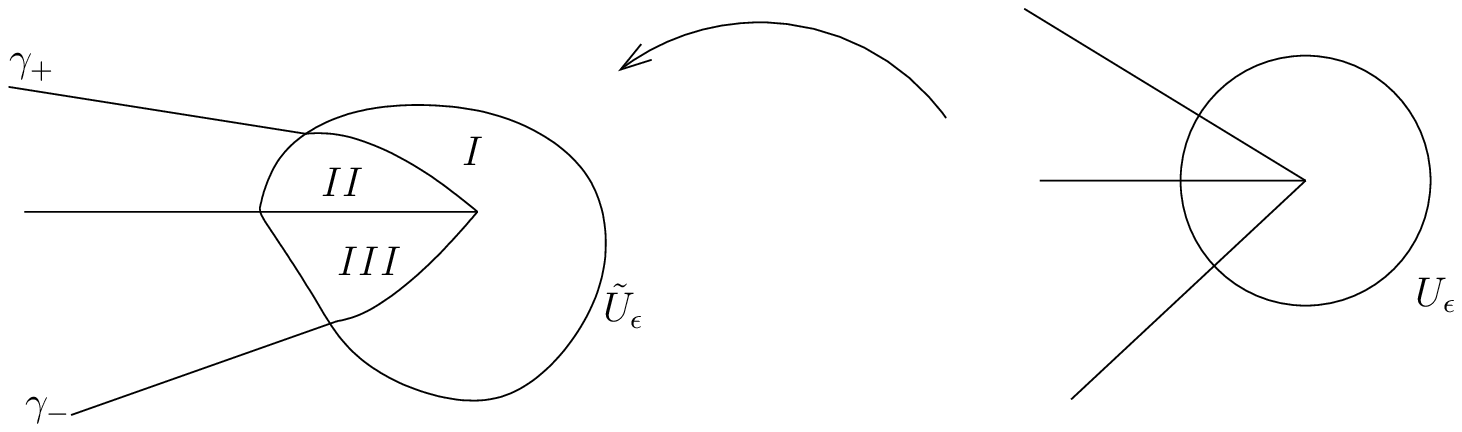}
              }
            }

\caption{The contour for $P_{\rm B}$ and corresponding transformation.}
\label{besselcurve}
\end{figure}

The analogue of Lemma \ref{airyasymp} can now be stated.

\begin{lemma}
\label{besselasymp}
Set
\be
\label{EEEdef}
    E_{\alpha}(\zeta) =   \Bigl(  |\alpha| - \frac{2}{3} \zeta  \Bigr)^{\sigma_3/2}  e^{-\frac{ \pi i}{4} \sigma_3},  \  \   \mbox{ for }  |\zeta| < \ep |\alpha|,
\ee
with any $\ep < 1$.  Then
$\gamma^{\pm}$ in (\ref{Bjumps}) may be chosen in such a way that
\be
    M^{\sla}(\zeta) \left[ E_{\alpha}(\zeta) \,
     P_{\rm B} \left(  \zeta  ( |\alpha | - \frac{2}{3} \zeta)^2 \right) \right]^{-1} =   \Bigl( I +  {\OO}  ( {|\alpha|}^{-1} ) \Bigr),
\  \  \  \alpha \ra - \infty,
\ee
uniformly on $|\zeta| < \ep |\alpha|$.
\end{lemma}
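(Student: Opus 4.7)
The plan is to construct a ratio $N(\zeta) := M^{\sla}(\zeta)\mathcal{P}(\zeta)^{-1}$ with parametrix
$$\mathcal{P}(\zeta) := E_{\alpha}(\zeta)\, P_{\rm B}\!\left(w(\zeta)\right),\qquad w(\zeta) := \zeta\bigl(|\alpha|-\tfrac{2}{3}\zeta\bigr)^{2},$$
and show via a small-norm Riemann--Hilbert argument that $N(\zeta) = I + \OO(|\alpha|^{-1})$ uniformly on $|\zeta|<\ep|\alpha|$. This reduces the lemma to three compatibility checks (of jumps, of analyticity, and of infinity-asymptotics), all resting on a single key algebraic fact.

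First, I would verify the jumps match. Choosing $\ep < 1/2$, the derivative $w'(\zeta) = (|\alpha|-\tfrac{2}{3}\zeta)(|\alpha|-2\zeta)$ is nonvanishing on $|\zeta|<\ep|\alpha|$, so $w$ is conformal there and sends $(-\infty,0)\cap\{|\zeta|<\ep|\alpha|\}$ to $(-\infty,0)$. Taking $\gamma^{\pm}$ to be the preimage under $w$ of the Bessel contours $\arg w=\pm 2\pi/3$, a branch computation gives $w^{1/2}=\zeta^{1/2}(|\alpha|-\tfrac{2}{3}\zeta)$, so the Bessel jump-exponent $e^{-2w^{1/2}}$ becomes precisely $e^{-2|\alpha|\zeta^{1/2}+\frac{4}{3}\zeta^{3/2}} = e^{\frac{4}{3}\zeta^{3/2}+2\alpha\zeta^{1/2}}$, which is exactly the off-diagonal entry in the $M^{\sla}$ jump along $\arg\zeta = \pm\tfrac{2}{3}\pi$. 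The factor $E_{\alpha}(\zeta)$ is diagonal and, since $|\alpha|-\tfrac{2}{3}\zeta\neq 0$ on the disk, analytic there, so conjugation by $E_{\alpha}$ preserves the lower-triangular structure of those jumps; and on the negative axis $E_{\alpha}$ anticommutes with $\left(\begin{smallmatrix}0&1\\-1&0\end{smallmatrix}\right)$ only up to scalars that cancel, yielding the correct swap jump. Consequently $N(\zeta)$ extends analytically across every segment of the contour interior to $|\zeta|<\ep|\alpha|$, leaving only a jump on the boundary circle.

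Next I would analyze this residual jump. On $|\zeta|=\ep|\alpha|$, the argument $w$ has magnitude $\sim |\alpha|^{3}$, so (\ref{Basym}) applies and
$$P_{\rm B}(w) = w^{-\sigma_{3}/4}\,\tfrac{1}{\sqrt{2}}\!\begin{pmatrix}1&i\\ i&1\end{pmatrix}\bigl(I+\OO(|\alpha|^{-3/2})\bigr).$$
Writing $w^{-\sigma_{3}/4} = \zeta^{-\sigma_{3}/4}(|\alpha|-\tfrac{2}{3}\zeta)^{-\sigma_{3}/2}$ shows the factor $(|\alpha|-\tfrac{2}{3}\zeta)^{\sigma_{3}/2}$ in $E_{\alpha}$ is tailor-made to cancel it. A one-line computation using $i = e^{i\pi/2}$ gives
$$e^{-\frac{i\pi}{4}\sigma_{3}}\tfrac{1}{\sqrt{2}}\!\begin{pmatrix}1&i\\ i&1\end{pmatrix} = \tfrac{1}{\sqrt{2}}\!\begin{pmatrix}1&1\\ -1&1\end{pmatrix}e^{-\frac{i\pi}{4}\sigma_{3}},$$
so $\mathcal{P}(\zeta)$ reduces on the circle to the leading infinity-asymptotic (\ref{Masymp1}) of $M^{\sla}$, up to a multiplicative $I+\OO(|\alpha|^{-3/2})$. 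Combined with the $\OO(\zeta^{-1}) = \OO(|\alpha|^{-1})$ term from the expansion of $M^{\sla}$ itself, the jump $N_{-}^{-1}N_{+}$ on the circle is $I + \OO(|\alpha|^{-1})$ in $L^{\infty}\cap L^{2}$.

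Finally, the standard small-norm theory applied to the singular integral operator $C_{v}$ associated to this jump yields $N = I + \OO(|\alpha|^{-1})$ uniformly, which is the stated estimate (the maximum principle, applied to $N-I$ analytic in the interior disk, extends the bound inward). The main obstacle is the jump/branch matching: one must confirm that the choice of square-root branches in $w^{1/2}$, $\zeta^{1/2}$ and $(|\alpha|-\tfrac{2}{3}\zeta)^{1/2}$ are consistent across the cut structures of both problems, and that the preimage contours $\gamma^{\pm}$ can indeed be chosen as globally smooth curves lying inside the disk. Once that bookkeeping is in place, everything else is a routine application of the small-norm framework already invoked in Section \ref{existence}.
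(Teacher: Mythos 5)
Your proposal follows the same overall strategy as the paper --- the change of variables $w(\zeta)=\zeta(|\alpha|-\tfrac{2}{3}\zeta)^{2}$ is literally the paper's $|\alpha|^{3}\eta(\zeta/|\alpha|)$, the branch computation $w^{1/2}=\zeta^{1/2}(|\alpha|-\tfrac{2}{3}\zeta)$ matching $e^{-2w^{1/2}}$ to $e^{\frac{4}{3}\zeta^{3/2}+2\alpha\zeta^{1/2}}$ is correct, and the constant-matrix identity $e^{-\frac{i\pi}{4}\sigma_{3}}\frac{1}{\sqrt{2}}\left(\begin{smallmatrix}1&i\\ i&1\end{smallmatrix}\right)=\frac{1}{\sqrt{2}}\left(\begin{smallmatrix}1&1\\-1&1\end{smallmatrix}\right)e^{-\frac{i\pi}{4}\sigma_{3}}$ checks out. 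However, there is a genuine gap in the closing small-norm step: you define $N(\zeta)=M^{\sla}(\zeta)\mathcal{P}(\zeta)^{-1}$ only on the disk $|\zeta|<\ep|\alpha|$, and then try to treat the circle $|\zeta|=\ep|\alpha|$ as though it were a jump contour in a global RHP, and as a fallback appeal to the maximum principle. Neither works as written. The maximum-principle route fails because bounding $N-I$ on the circle requires knowing $M^{\sla}(\zeta)$ there, and the expansion $M^{\sla}(\zeta)=\bigl(I+\OO(\zeta^{-1})\bigr)\mathfrak{m}(\zeta)$ has an $\alpha$-dependent coefficient in the $\OO(\zeta^{-1})$ term --- nothing yet proved makes that coefficient uniformly bounded, so there is no a priori $I+\OO(|\alpha|^{-1})$ estimate on $|\zeta|=\ep|\alpha|$ to propagate inward. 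And the small-norm route requires a ratio defined on all of $\C\backslash\Sigma$ and normalized at infinity.

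What is missing is precisely the paper's outer piece: one must also set $N(\zeta)=M^{\sla}(\zeta)\mathfrak{m}(\zeta)^{-1}$ for $|\zeta|>\ep|\alpha|$ (equivalently, after the scaling $\zeta=|\alpha|w$, the paper's $M^{(2)}(w)=M^{(1)}(w)\mathfrak{m}(|\alpha|w)^{-1}$). Only then is $N$ a well-posed RHP with contour consisting of the two exterior rays and the boundary circle, normalized to $I$ at infinity. The essential gain is that the jump on the circle then equals $\mathfrak{m}\bigl(E_{\alpha}P_{\rm B}\bigr)^{-1}$ (or its inverse, depending on orientation) --- the unknown $M^{\sla}$ cancels between the two definitions --- so your Bessel-asymptotics computation really does yield an $I+\OO(|\alpha|^{-1})$ jump in $L^{\infty}\cap L^{2}$, the jump on the exterior rays is exponentially small, and Proposition \ref{neumann} delivers $N=I+\OO(|\alpha|^{-1})$ uniformly. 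Once this global ratio is in place, the rest of your proposal is a correct reproduction of the paper's argument.
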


From Lemmas \ref{airyasymp} and \ref{besselasymp}, Corollary \ref{asympcor} is read off
immediately from the explicit forms of  $P_{\rm A}$ or $E_{\alpha} P_{\rm B}$: we have for instance,
\[
   (M^{\sra})_{11}(\zeta) =  \sqrt{ 2\pi }  e^{- \pi i /4}  \Ai(\zeta)  e^{\frac{2}{3} \zeta^{3/2} } \, (1 + \OO( e^{- \frac{2}{3} \alpha^{3/2}} ) )
\]
and
\[
   (M^{\sla})_{11}(\zeta)
    =  \sqrt{ 2\pi }  e^{- \pi i /4}  ({ |\alpha| - \frac{2}{3} \zeta})^{1/2}  \, I_{0} \Bigl( \sqrt{\zeta} ( |\alpha| - \frac{2}{3} \zeta)  \Bigr)
     e^{ -  \sqrt{\zeta} ( |\alpha| - \frac{2}{3} \zeta ) } \, (1 + \OO( |\alpha|^{-1} ) ).
\]
Lemmas \ref{airyasymp} and \ref{besselasymp} themselves follow directly from checking
condition (\ref{est2}) of the below proposition, the proof of which may be found in \cite{DKMVZ99a}, Section 7.

\begin{proposition}
\label{neumann}
If for a family of $L^2$-solvable $RHP$'s  $(\Sigma, v_{n})$ there is
the estimate
\be
\label{est2}
   || v_n - I ||_{L^{\infty}(\Sigma) \cap L^2(\Sigma)}  \le \frac{C}{n},
\ee
for a fixed constant $C$ and all large $n$, then
$ || C_{v_n}  ||_{L^2(\Sigma) \ra L^2(\Sigma)}  =    \OO( \frac{1}{n})$,
and the solutions satisfy  $
    m_{n}(\zeta) = I + \OO( \frac{1}{n}) $
 uniformly for $\zeta$ a positive distance  from   $\Sigma$.
\end{proposition}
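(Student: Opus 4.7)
The plan is a standard small-norm argument for singular integral equations: once the operator norm $\|C_{v_n}\|_{L^2\to L^2}$ is shown to be $\OO(1/n)$, the resolvent $(\ID-C_{v_n})^{-1}$ exists as a convergent Neumann series and is bounded independently of $n$, so the density $\mu_n$ satisfying $(\ID-C_{v_n})\mu_n=I$ lies within $\OO(1/n)$ of $I$ in $L^2(\Sigma)$, and the integral representation (\ref{solrep}) then delivers the pointwise bound $m_n(\zeta)=I+\OO(1/n)$ away from the contour.

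First I would establish the operator norm estimate. Since $C_{v_n}f = C_{-}(f(v_n-I))$ and the minus-Cauchy operator $C_{-}$ is bounded on $L^2(\Sigma)$ by a constant depending only on the (fixed) contour, one has
\[
\|C_{v_n}f\|_{L^2(\Sigma)} \le \|C_{-}\|_{L^2\to L^2}\,\|v_n-I\|_{L^\infty(\Sigma)}\,\|f\|_{L^2(\Sigma)} \le \frac{K}{n}\|f\|_{L^2(\Sigma)},
\]
invoking the hypothesis $\|v_n-I\|_{L^\infty}\le C/n$. For $n$ large enough this puts $\|C_{v_n}\|_{L^2\to L^2}<1/2$, whence
\[
(\ID-C_{v_n})^{-1} = \sum_{k\ge 0} C_{v_n}^k
\]
converges in operator norm with bound $\le 2$.

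Next, rewrite (\ref{mudef}) as $\mu_n-I = (\ID-C_{v_n})^{-1}(C_{v_n}I)$ and observe $C_{v_n}I = C_{-}(v_n-I)$. Combined with the $L^2$-bound $\|v_n-I\|_{L^2}\le C/n$ this yields $\|\mu_n-I\|_{L^2(\Sigma)} = \OO(1/n)$. Inserting into (\ref{solrep}) and splitting,
\[
m_n(\zeta)-I = \frac{1}{2\pi i}\int_{\Sigma}\frac{v_n(s)-I}{s-\zeta}\,ds + \frac{1}{2\pi i}\int_{\Sigma}\frac{(\mu_n(s)-I)(v_n(s)-I)}{s-\zeta}\,ds,
\]
and applying Cauchy--Schwarz to each integral, the first term is bounded by $\|(s-\zeta)^{-1}\|_{L^2(\Sigma)}\,\|v_n-I\|_{L^2} = \OO(1/n)$ and the second by $\|(s-\zeta)^{-1}\|_{L^\infty(\Sigma)}\,\|\mu_n-I\|_{L^2}\,\|v_n-I\|_{L^2} = \OO(1/n^2)$, the constants in either case being controlled by $\mathrm{dist}(\zeta,\Sigma)^{-1}$ for $\zeta$ at positive fixed distance from $\Sigma$.

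The main (and mild) technical obstacle is the contour dependence of the constants: one needs $\|C_{-}\|_{L^2\to L^2}$ finite for whichever $\Sigma$ is in play, together with a uniform bound on $\|(s-\zeta)^{-1}\|_{L^2(\Sigma)}$ for $\zeta$ away from $\Sigma$. Both are classical for the piecewise-smooth contours with finitely many self-intersections appearing in the applications; the second is automatic provided $\Sigma$ has bounded total length near $\zeta$, which can always be arranged by truncating the part of the contour on which $v_n-I$ is already exponentially small, with negligible error.
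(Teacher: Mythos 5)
Your argument is correct and is precisely the standard small-norm (Neumann series) estimate that the paper delegates to \cite{DKMVZ99a}, Section 7: boundedness of $C_{-}$ on $L^2(\Sigma)$ plus the $L^\infty$ smallness of $v_n - I$ gives $\|C_{v_n}\|_{L^2\to L^2}=\OO(1/n)$, hence a uniformly bounded resolvent, hence $\|\mu_n - I\|_{L^2}=\OO(1/n)$, and the Cauchy--Schwarz split of (\ref{solrep}) finishes. One small clean-up: the remark about ``truncating the contour to get bounded length near $\zeta$'' is unnecessary (and slightly misleading) --- $\|(s-\zeta)^{-1}\|_{L^2(\Sigma)}$ is already finite and controlled by $\mathrm{dist}(\zeta,\Sigma)$ even for contours extending to infinity, since $|s-\zeta|^{-2}$ decays quadratically; no truncation is needed.
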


The fact that (\ref{est2}) implies a like bound on the operator norm of $C_{v_n}$ actually
implies the existence of a full asymptotic expansion of $M^{\sra}(\zeta)$ and $M^{\sla}(\zeta)$
in powers of $e^{- \alpha^{3/2}}$ or $ \alpha^{-1}$ with sectionally analytic coefficients.
This is not pursued here.

\begin{proof}[Proof of Lemma \ref{airyasymp}]
Define
\[
   R(\zeta) = M^{\sra}(\zeta) (P_{\rm A})^{-1}(\zeta),
\]
which solves the $RHP$:
\be
\ba{ll}
  R(\zeta)  \mbox{ analytic in }  \C \backslash [ \alpha, \infty)  & \\
  R_{+}(\zeta)     = R_{-}(\zeta)  \, \left( (P_{\rm A})_{-}(\zeta) \, \left( \ba{cc} 1 & - e^{-\frac{2}{3} \zeta^{3/2}} \\ 0 & 1 \ea \right)  \,
                                                                   (P_{\rm A})_{-}^{-1}(\zeta) \right),  &
   \zeta \in [ \alpha, \infty) \\
   R(\zeta) = I + O(\frac{1}{\zeta}),  & \zeta \ra \infty.
\ea
\ee
The jump matrix along $[\alpha, \infty)$ can be simplified as in
\beqn
 (P_{\rm A})_{-}(\zeta) \, \left( \ba{cc} 1 & - e^{-\frac{2}{3} \zeta^{3/2}} \\ 0 & 1 \ea \right)  \, (P_{\rm A})_{-}^{-1}(\zeta)
    & = &  I  -  e^{-\frac{\pi i}{3} }  \left( \ba{cc}  - \Ai(\zeta) \Ai^{\pr}(\zeta)   &   \Ai^{2}(\zeta) \\
                                                                                                           ( \Ai^{\pr}(\zeta)  )^2  &   \Ai(\zeta) \Ai^{\pr}(\zeta)  \ea \right) \\
    & \equiv & I -  V_{R} (\zeta).
\eeqn
Next, noting the asymptotics,
\[
\ba{ll}
   \Ai(\zeta)           =  &  \frac{z^{-1/4}}{ 2 \sqrt{\pi}}  e^{-\frac{2}{3} \zeta^{3/2} } ( 1 + \OO( |\zeta|^{-3/2})),  \\
   \Ai^{\pr}(\zeta)  =  &     \frac{-z^{-1/4}}{ 2 \sqrt{\pi}}  e^{-\frac{2}{3} \zeta^{3/2} } ( 1 + \OO( |\zeta|^{-3/2})) ,
 \ea    \ \  \   \zeta \ra \infty,  \  \ \ |\arg(\zeta) | \le  \frac{2}{3} \pi,
\]
(see \cite{AS}, p. 446),
we have that both
$ || V_R ||_{L^{\infty}[\alpha, \infty)}  $
   and
$|| V_R ||_{L^2[\alpha, \infty)}$ are bounded by constant multiples of   $ e^{- \alpha^{3/2} }$,
and the claim follows.
\end{proof}

\begin{proof}[Proof of Lemma \ref{besselasymp}]
First we consider the scaled $RHP$ for
\[
   M^{(1)}(w) \equiv M^{\sla}( |\alpha| w ),
\]
which has the jump conditions:
\[
   \left(  \ba{ll} 1 & 0 \\ e^{ - |\alpha|^{3/2} ( 2 w^{1/2} - \frac{4}{3} w^{3/2})}  & 1   \ea \right),   \
   \arg w = \pm \frac{2}{3} \pi,
    \  \mbox{ and }     \  \left(  \ba{rr} 0 &1  \\ -1 & 0 \ea \right),   \  w \in \R_{-}.
\]
We will now extract a local parametrix in a neighborhood of $w = 0$.  For
$w \in U_{\ep} = \{ |w| < \ep \}$ and $\ep < 1$ define
\[
   \eta =   \eta(w) =   w ( 1 -  \frac{2}{3} w)^2
\]
Clearly, $\eta(w)$ takes $U_{\ep}$ in a one-to-one fashion onto a open neighborhood
${\wt U}_{\ep}$ of $\nu = 0$, sending the negative real line to itself and the segments
$\arg w = \pm 2 \pi/3$ onto rays $\gamma^{\pm} \subset {\wt U}_{\ep}$ lying above
and below the real axis.   Extending $\gamma^{\pm}$ to $\infty$  (smoothy) along
straight lines outside of  ${\wt U}_{\ep}$ we have the jump relations
\[
   \left(  \ba{ll} 1 & 0 \\ e^{ -2  |\alpha|^{3/2}  \eta^{1/2}}  & 1   \ea \right),   \
   \eta \in \gamma^{\pm}
    \  \mbox{ and }     \  \left(  \ba{rr} 0 &1  \\ -1 & 0 \ea \right),   \  \eta \in \R_{-}.
\]
This identifies the choice of $\gamma^{\pm}$, and with this choice the above problem
is solved by $P_{B}  (|\alpha|^3 \eta)$.  What is the same, $P_{B}(|\alpha|^3 \eta(w))$
satisfies the jump relations for $M^{(1)}(w)$ restricted to $|w| < \ep$ (in which the
upper and lower contours are pulled back to $\arg w = \pm 2 \pi/3.$

Next we perform a second transformation, setting
\be
    M^{(2)}( w )  = \left\{ \ba{ll}  M^{(1)}(w)  \Bigl( {\mathfrak m}( |\alpha| w) \Bigr)^{-1},  &   |w| > \ep,  \\
                                    M^{(1)}(w)  \Bigl(
                    E_{\alpha}(|\alpha| w)
                      P_{\rm B} ( |\alpha|^3 \eta(w)) \Bigr)^{-1},  & |w| < \ep \ea \right. .
\ee
The definition of  $E_{\alpha}(|\alpha| w)$ is given in (\ref{EEEdef}).
It is analytic in $|w| < \ep$ and
so $E_{\alpha}(|\alpha| w) P_{\rm B}(  |\alpha|^3 \eta(w))$ also shares
jump conditions with $M^{(1)}(w)$ in $|w| < \ep$.

The point is that $M^{(2)}( w )$ satisfies a new $RHP$  with  jump contour consisting of
three pieces: the rays $\Gamma^{\pm} \equiv  \arg w = \pm 2 \pi/3$  $\cap \{ |w| > \ep \} $ and
the $\partial U_{\ep}$ the boundary of the disk of radius $\ep.$

On either of the first set of contours, $\Gamma^{\pm}$, the jump matrix is bounded as in
\beqn
  \Bigl| {\mathfrak m}(  |\alpha| w)
   \left(  \ba{cc}  1 & 0 \\ e^{ - |\alpha|^{3/2} (  w - \frac{2}{3} w^{3/2} ) } & 0  \ea \right)
    ( {\mathfrak m} (|\alpha| w) )^{-1}  \Bigr|
      \le   I + e^{ -  \frac{2}{3} |\alpha|^3 |w|^{3/2} }   \left(  \ba{cc}   1 & 1 \\  \alpha^2 |z|^{1/2} & 1 \ea \right)
\eeqn
which is to say it is $I + \OO(e^{-  C_{\ep} |\alpha|^3 })$ in  $L^2 \cap L^{\infty}(\Gamma^{\pm})$. On
$\partial  U_{\ep}$ the jump matrix is $E_{\alpha}(|\alpha|w)  $ $P_{\rm B} ( |\alpha|^3 \eta(w))$ $
({\mathfrak m}(|\alpha| w))^{-1}$
and we compute: for  $|w| = \ep$ and $|\alpha| \gg 1$,
\beqn
E_{\alpha}(|\alpha| w) \,  P_{\rm B} ( |\alpha|^3 \eta(w)) \, ({\mathfrak m}(|\alpha| w))^{-1} & = &
E_{\alpha}(|\alpha| w) \,  \left(  \frac{ w}{ \alpha^2 \eta(w) } \right)^{{\sigma_3}/4}   e^{\frac{ \pi i}{4} \sigma_3} \, \Bigl( I + \OO(\alpha^{-1}) \Bigr) \\
& = & I + \OO(\alpha^{-1}).
\eeqn
It follows that
\[
   M^{(2)}(w)  = I + \OO(\alpha^{-1}),   \mbox{ uniformly for } w  \mbox{ supported away from  }  \partial U_{\ep} \cup \Gamma^{+} \cup \Gamma^{-}.
\]
Undoing the transformations inside of $U_{\ep}$ establishes the claim for $M^{\sla}(\zeta)$.
\end{proof}

\bigskip
\noindent
{\bf Acknowledgments. }  
We are grateful to Arno Kuijlaars for pointing out reference \cite{ClaeysArno} as well as an important glitch
in an earlier version of this paper.
The work of Brian Rider was supported in
part by NSF grant DMS-0505680; Xin Zhou was supported in part by
NSF grant DMS-0602344.

\end{document}